\begin{document}

\title{
Distortion estimates for barycentric coordinates on Riemannian simplices}
\author{Stefan W. von Deylen\thanks{formerly Freie Universit\"at Berlin,
									\texttt{buero@SW.vonDeylen.net}.}
\and David Glickenstein\thanks{University of Arizona,
		                            \texttt{glickenstein@math.arizona.edu}.}
	\and Max Wardetzky\thanks{Universit\"at G\"ottingen,
		                            \texttt{wardetzky@math.uni-goettingen.de}.}}
\date{
\today}
\maketitle

\begin{abstract}
\noindent
We define barycentric coordinates on a Riemannian manifold using
Kar\-cher's center of mass technique applied to 
point masses for $n+1$ sufficiently close points, determining an $n$-dimensional 
Riemannian simplex defined as a ``Karcher simplex.'' Specifically,
a set of weights is mapped to the Riemannian center of mass
for the corresponding point measures on the manifold with the given weights.
If the points lie sufficiently close and in general position,
this map is smooth and injective, giving a coordinate chart. We 
are then able to compute first and 
second derivative estimates of the coordinate chart. These estimates 
allow us to compare
the Riemannian metric with the Euclidean metric induced on a simplex
with edge lengths determined by the distances between the points. We
show that these metrics differ by an error that shrinks quadratically
with the maximum edge length. With such estimates, one can deduce 
convergence results for finite element approximations of problems on
Riemannian manifolds.
\end{abstract}

\vspace{1ex}
{\sloppypar
\tableofcontents
}


\section{Introduction}

There are two major approaches to numerical computations on a Riemannian manifold: (1) mapping the smooth manifold to 
a triangulated manifold and performing computations on Euclidean simplices (see, e.g., \cite{Dziuk88,Bartels10,Holst12}), or (2) performing each computation in a natural chart, 
such as geodesic (normal) coordinates (e.g., see \cite{Muench07,Huper04,Cruzeiro06}). The advantage
of using triangulations is that they provide a global description, independent of coordinates.
Normal coordinates, in contrast, allow for interior estimates
on metric distortion and curvature 
inside coordinate charts, but can be difficult to work with globally due to coordinate changes. 
Here we present a matrimony between these two approaches using Karcher's center of mass
technique. 

The goal of the present work is to give a comprehensive treatment of the interior
estimates from the perspective of geometric analysis. 
We will not provide details for applications, most of which follow from our estimates using 
standard principles from numerical analysis.  
Many possible applications are given in \cite{Deylen14}.---For a short overview of related
work, see sec.~\ref{sec:relatedWork}.

\subsection{Barycentric coordinates}
Riemannian barycentric coordinates are based on the notion of barycentric coordinates in Euclidean space.
Let $\stds := \conv(e_0,\dots,e_n) \subset \R^{n+1}$ denote the standard simplex, and let 
$p_0,\dots,p_n$ be points in Euclidean space $\R^m$. The
barycentric coordinates of the (possibly degenerate)
simplex $s := \conv(p_0,\dots,p_n)$ are defined by
\[
  x: \stds \to s \ , \quad
	x(\lambda) =  \sum_{i=0}^n \lambda^i p_i \ .
\]
Notice that since the $\lambda^i$ sum to $1$,
the point $x(\lambda)$ is the minimizer of
the function 
\[
  E_\lambda(a) := \sum_{i=0}^n \lambda^i \absval{a - p_i}^2.
\] 

This construction can be generalized to Riemannian manifolds. To do this we will need a notion of 
a Riemannian simplex.
Let $p_0,\dots,p_n$ be distinct points in a convex ball $B$ of a complete Riemannian
manifold $(M,g)$ of dimension $m$. Let the pairwise geodesic distances between these given points satisfy $\dist_g(p_i,p_j) \leq h$ for some $h>0$.
Let $g^e$ be the (unique if it exists) flat metric on the standard simplex $\stds
\subset \R^{n+1}$ such that the induced edge lengths of $\stds$ are given by the geodesic distances $\dist_g(p_i,p_j)$. Below we discuss conditions for when such a flat metric exists. 
Suppose that for
some $\theta > 0$, $g^e$ gives a ``$(\theta,h)$-full'' simplex in the sense that
the induced volume satisfies $n! \vol_{g^e}(\stds) \geq \theta h^n$. Then for 
fixed $\theta$ and sufficiently small $h$, the map
(introduced in \cite{Grove73})
\begin{align*}
	x: \stds & \to M,
\quad \text{defined via}\quad
	   x(\lambda) = \argmin_{a \in M} \sum_{i=0}^n \lambda^i \dist^2_g(p_i,a),
\end{align*}
is a bijection between $\stds$ and a subset $s$ of $B$, called the
\emph{Karcher simplex} with respect to vertices $p_i$.

Now consider a global triangulation of $M$ by Karcher simplices.
Notice that if $\lambda \in \stds$ lies in the facet opposite
to the $i$'th vertex
of $\stds$, then its component $\lambda^i$ is
zero and $x(\lambda)$
does not depend on $p_i$. Therefore, 
the flat (Euclidean) simplices can be glued together to give a piecewise flat manifold that is homeomorphic to $M$, and $x$ can be extended to a provide a global homeomorphism.
The non-degeneracy of these simplices will be assumed in our setting,
but is currently investigated in more detail in \cite{Dyer14}
(see also the previous work in \cite{Boissonnat11}).

We provide estimates on how well the piecewise Euclidean structure, $g^e$, of this piecewise flat manifold approximates the smooth Riemannian one, $g$, of $M$. Notice that on each Karcher simplex the map $x$ pulls back the Riemannian metric $g$ on $M$ to $\stds$. We derive estimates for both first and second derivatives of
$x$ on a simplex. We then derive estimates for the difference $x^*g-g^e$ as well as for $\nabla^e x^* g$, where $\nabla^e$ denotes the covariant derivative induced by $g^e$.

\subsection{Main results}
\label{sec:main}
The tangent space $T_\lambda\stds$ of $\stds$ can be identified with 
$\{v \in \R^{n+1}:\sum{v^i}=0\}$. Let $p_0,\dots,p_n\in M $ be distinct points inside a convex ball of radius $h$ in the Riemannian manifold $(M,g)$ of dimension $m$.
A set is convex if each pair of points has a unique shortest geodesic that lies
entirely in that set.

\begin{definition}
For an arbitrary fixed $\lambda \in \stds$ and $v, w \in T_\lambda\stds $ define
\[
		g^e(v,w) = - \frac 1 2 \sum_{i,j=0}^n{\dist^2(p_i, p_j)v^iw^j}.	
\]
\end{definition}
In Section \ref{sec:EuclidSimplices}
we discuss conditions for when $g^e$ yields a metric, i.e., when it is positive definite. For now suppose it does. In the following we are interested in how well $x^*g$ is approximated by $g^e$. 

In our estimates we require a definition of fullness that quantifies how ``thin'' a simplex can become with respect to some Riemannian metric.
\begin{definition}\label{def:fullness}
  A $n$-simplex $s$ with Riemannian metric $g$ is \emph{$(\theta, h)$-full}
  if all edges have length less than or equal to $h$ and
  \[
    n! \vol_g(s) \geq \theta h^n,
  \]
  where $\vol_g(s)$ is the Riemannian volume.
\end{definition}

Among Euclidean simplices, the maximal $\theta$ is attained for the equilateral simplex,
which shows $\theta \leq \sqrt{n+1} / 2^{n/2}$.



Now let us look at $x$ from above for the simplest case $M = \R^{n+1}$. Here,
one has $dx(v) = \sum v^i X_i|_{x(\lambda)}$ for every $v\in T_\lambda \stds$, where $X_i$ is the vector field on $\conv(p_0,\dots,p_n)$ defined by $X_i = \frac 1 2 \grad \dist^2 (\argdot,p_i)$. This motivates the following definition in the Riemannian setting.
\begin{definition}
For every $v\in T_\lambda \stds$ define
	\[
  	\sigma (v) =	\sum_{i=0}^n v^i X_i|_{x(\lambda)},
	\] 
	where $X_i = \frac 1 2 \grad \dist^2 (\argdot,p_i)$ are vector fields on $x(\stds)\subset M$.
\end{definition}

The following theorem quantifies how much $\sigma$ deviates from $dx$. Before stating this result, we require some additional notation. We use $\nabla d x$ to denote the Hessian of $x$ with respect to the Levi-Civita connection on $(M,g)$ and the flat connection induced by $g^e$ on $\stds$. We use $\inorm[\infty]{R}$
and $\inorm[\infty]{\nabla R}$ to denote the supremum over the manifold $M$ of the usual pointwise 2-norm of the Riemannian curvature tensor and its covariant derivative (with respect to the metric $g$), respectively.

	Throughout, we let $C_0 := \inorm[\infty] R$, $C_1 := \inorm[\infty]{\nabla R}$, and we let $\iota_g$ denote the injectivity radius of $(M,g)$.
	Additionally, when working in a ball of radius $h$, we use $C_{0,1} := C_0 + h C_1$.  
	%
%

\begin{theorem}\label{thm:main1}
	\label{thm:EstimateForx}
	There exist constants $\alpha=\alpha(n,\theta,C_0,C_1)$, $\beta=\beta(n,C_0)$,
	 and $\gamma=\gamma(n,\theta,C_0,C_1)$ 
	such that if $h < \alpha$ and $(\stds, g^e)$ is a $(\theta,h)$-full simplex then
  \begin{equation}
    \bigabsval[g]{dx(v) - \sigma(v)}  \leq
			\beta \,h^2 \absval[g^e]{v} ,
	\end{equation}
	and
	\begin{equation}
	  \absval[g]{\nabla dx(v,w)} \leq \gamma \,h \absval[g^e] v \,
	   	\absval[g^e] w,
  \end{equation}
  for	tangent vectors $v,w \in T_\lambda \stds$ at any $\lambda \in \stds$.	 
	   
\end{theorem}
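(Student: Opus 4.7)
The natural starting point is the implicit characterization of $x$. Since $x(\lambda)$ is the minimizer of the Karcher energy, it satisfies the first-order condition
\[
F(\lambda, x(\lambda)) := \sum_{i=0}^n \lambda^i\, X_i\big|_{x(\lambda)} = 0.
\]
Differentiating in $\lambda$ along $v \in T_\lambda\stds$ and setting $A(\lambda) := \sum_i \lambda^i\, \nabla X_i|_{x(\lambda)}$ (an endomorphism of $T_{x(\lambda)}M$), one obtains the basic relation
\[
\sigma(v) + A(\lambda)\, dx(v) = 0.
\]
Hence $dx = -A^{-1}\sigma$ as soon as $A$ is invertible, and everything reduces to quantifying the deviation of $A$ and its covariant derivative from a multiple of $\id$.

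The analytic input is the standard Jacobi-field expansion of the Hessian of $\tfrac{1}{2}\dist^2(\argdot,p_i)$. Uniformly for $a$ in a convex ball of radius $h$ around $p_i$, it yields
\[
\nabla X_i|_a = \id + O(C_0 h^2), \qquad \nabla(\nabla X_i)|_a = O(C_{0,1}\, h),
\]
with dimension-dependent constants (the sign in front of $\id$ being dictated by whichever convention for $X_i$ makes $dx=\sigma$ hold in the Euclidean limit). Averaging with the weights $\lambda^i\ge 0$, $\sum\lambda^i = 1$, gives $A = \id + O(C_0 h^2)$, so for $h$ below a threshold $\alpha_0(n, C_0)$ the operator $A$ is invertible with $\|A^{-1} - \id\| \lesssim C_0 h^2$. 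Combined with the Riemannian version of the elementary Euclidean identity $|\sigma(v)|_g = |v|_{g^e}$ (proved by Taylor-expanding $\dist^2$ at $x(\lambda)$, with $(\theta,h)$-fullness used to control the coordinates $v^i$ in terms of $|v|_{g^e}$), this gives
\[
|dx(v) - \sigma(v)|_g = \bigl|(A^{-1} - \id)\,\sigma(v)\bigr|_g \leq \beta\, h^2\, |v|_{g^e},
\]
which is the first estimate.

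For the Hessian bound one differentiates the implicit equation a second time. Treating $v, w$ as constant vectors in $\stds$, one obtains
\[
(\nabla_w \sigma)(v) + (\nabla_w A)\, dx(v) + A\, \nabla dx(v,w) = 0,
\]
with $\nabla_w \sigma(v) = \bigl(\sum_i v^i \nabla X_i\bigr)\, dx(w)$ and $\nabla_w A = \sum_i w^i\, \nabla X_i + \sum_i \lambda^i \nabla_{dx(w)}(\nabla X_i)$. The decisive observation is that $\sum_i v^i = \sum_i w^i = 0$ annihilates the identity parts of $\sum v^i \nabla X_i$ and $\sum w^i \nabla X_i$; what remains is bounded by the Hessian correction $O(C_0 h^2 \sum_i |v^i|)$ and the third-derivative term $O(C_{0,1}\, h)$. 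Using $(\theta,h)$-fullness to convert $\sum_i |v^i| \lesssim h^{-1} |v|_{g^e}$ (and the already established $|dx(v)|_g \lesssim |v|_{g^e}$), each of $\nabla_w \sigma(v)$ and $(\nabla_w A)\, dx(v)$ is $O(h\, |v|_{g^e}\, |w|_{g^e})$; multiplying by $A^{-1}$ yields the claimed $\gamma\, h$ bound on $\nabla dx(v,w)$.

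The main obstacle is the simultaneous bookkeeping of three small parameters: the curvature scale governed by $C_0,C_1$, the fullness $\theta$, and the edge length $h$. One must pick $\alpha$ small enough that the geodesics from $x(\lambda)$ to each $p_i$ lie in a common convex ball on which the Hessian expansion is valid, that $\|A^{-1}\|$ remains uniformly bounded by, say, $2$, and that the fullness-based comparisons between $|\sigma(v)|_g$, $\sum_i|v^i|$, and $|v|_{g^e}$ are simultaneously available. The cancellation $\sum v^i = 0$ in the second step is what upgrades naively $O(1)$ contributions to $O(h)$, so expanding $\nabla X_i$ and $\nabla(\nabla X_i)$ about the common base point $x(\lambda)$ --- rather than about each $p_i$ separately --- in a way that makes this cancellation visible is the chief technical burden; the rest is essentially the implicit function theorem plus Taylor expansion.
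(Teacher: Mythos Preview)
Your proposal is correct and follows essentially the same route as the paper: derive $A\,dx=\sigma$ from the implicit equation $F(x(\lambda),\lambda)=0$, use the Jacobi-field expansion $\nabla X_i=\id+O(C_0h^2)$ and $\nabla^2 X_i=O(C_{0,1}h)$ to control $A-\id$ and $A^{-1}$, compare $|\sigma(v)|_g$ with $|v|_{g^e}$ via the tangent-space simplex (edge lengths $|X_i-X_j|$ versus $\dist(p_i,p_j)$, with fullness entering through the edge-length--to--metric comparison), and for the second estimate differentiate once more and exploit $\sum v^i=0$ to cancel the identity parts of $\sum v^i\nabla X_i$, converting $\sum|v^i|\lesssim(\theta h)^{-1}|v|_{g^e}$ via fullness. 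The only cosmetic difference is that the paper groups the second-derivative terms slightly differently (isolating $\sum w^i\nabla_V X_i$, $\sum v^i\nabla_W X_i$, $\sum\lambda^i\nabla^2_{W,V}X_i$ separately rather than packaging two of them as $\nabla_w A$), and it absorbs the $\theta$-dependence of the $|\sigma(v)|_g\approx|v|_{g^e}$ comparison into the smallness threshold $\alpha$ rather than into $\beta$, which is why $\beta$ depends only on $(n,C_0)$.
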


These estimates can also be interpreted as estimates on the difference between the flat metric
$g^e$ and the pullback of the metric $g$ by the map $x$. We use $\nabla^e$ to denote the flat connection
on $(\stds, g^e)$.

\begin{theorem}
	\label{thm:comparisongge1}
	There exist constants $\alpha=\alpha(n,\theta,C_0,C_1)$, $\beta=\beta(n,\theta,C_0)$,
	 and $\gamma=\gamma(n,\theta,C_0,C_1)$ such that 
	if $h < \alpha$ 	and $(\stds, g^e)$ is a $(\theta,h)$-full simplex then 
	\begin{equation}
		\label{eqn:comparisongge1}
		\absval{(x^* g-g^e)(v,w)} \leq 
		  \beta \,h^2	\absval[g^e] v \, \absval[g^e] w,
	\end{equation}
	and
	\begin{equation}
	  \label{eqn:comparisonnablgge}
	  \absval{\nabla^e x^*g (u,v,w)} \leq 
	  \gamma\, h \absval[g^e] u \, \absval[g^e] v \, \absval[g^e] w
	\end{equation}
	for tangent vectors
	$u,v,w \in T_\lambda \stds$ at any $\lambda \in \stds$.
\end{theorem}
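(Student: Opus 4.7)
The plan is to derive both estimates from Theorem~\ref{thm:EstimateForx} together with a ``polarisation'' computation that lets $g^e$ emerge naturally from the inner products $g(X_i,X_j)$.

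For \eqref{eqn:comparisongge1}, I would first record two ingredients that are used repeatedly. Since $|X_i|_g = \dist_g(\argdot,p_i) \leq h$ on the Karcher simplex, and since $(\theta,h)$-fullness of $(\stds,g^e)$ yields a componentwise bound of the form $|v^i| \leq c(n,\theta)\,|v|_{g^e}/h$ (this is just a statement about how flat $g^e$ relates coordinates to lengths on a $\theta$-full simplex), one obtains $|\sigma(v)|_g \lesssim |v|_{g^e}$, and, combined with Theorem~\ref{thm:EstimateForx}, also $|dx(v)|_g \lesssim |v|_{g^e}$ for $h$ small. Writing $dx = \sigma + R$ with $|R(v)|_g \leq \beta h^2 |v|_{g^e}$, expanding
\[
  x^*g(v,w) = g(\sigma(v),\sigma(w)) + g(\sigma(v),R(w)) + g(R(v),\sigma(w)) + g(R(v),R(w)),
\]
the three terms involving $R$ are each $O(h^2 |v|_{g^e}|w|_{g^e})$.

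The core of the argument is the principal term $g(\sigma(v),\sigma(w)) = \sum_{i,j} v^i w^j g(X_i,X_j)|_{x(\lambda)}$. Here I would invoke the Taylor expansion of the squared distance function (a Jacobi-field calculation, as in Karcher's paper), yielding a Riemannian polarisation identity
\[
  g(X_i,X_j)(q) = \tfrac{1}{2}\bigl( \dist^2(q,p_i) + \dist^2(q,p_j) - \dist^2(p_i,p_j) \bigr) + E_{ij}(q),
\]
where $E_{ij}(q)$ measures the deviation of the Riemannian ``law of cosines'' from the Euclidean one and is $O(C_{0,1}\,h^4)$ uniformly in $q$ on the simplex. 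The key algebraic point is that $\sum_i v^i = \sum_j w^j = 0$, so the two $\dist^2(q,p_\cdot)$ terms vanish after summation and leave exactly $g^e(v,w)$; the remainder $\sum_{i,j} v^i w^j E_{ij}$ is controlled by the componentwise bound as $O(h^4) \cdot O(|v|_{g^e}|w|_{g^e}/h^2) = O(h^2 |v|_{g^e}|w|_{g^e})$. This yields \eqref{eqn:comparisongge1}.

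For \eqref{eqn:comparisonnablgge}, I would exploit the flatness of $g^e$. Extending $v$ and $w$ to (locally) $g^e$-parallel vector fields so that $\nabla^e_u v = \nabla^e_u w = 0$, metric compatibility of the Levi-Civita connection of $g$ gives
\[
  \nabla^e x^*g(u,v,w) = u\bigl( g(dx(v),dx(w)) \bigr) = g\bigl(\nabla dx(u,v), dx(w)\bigr) + g\bigl(dx(v), \nabla dx(u,w)\bigr).
\]
The Hessian estimate from Theorem~\ref{thm:EstimateForx} bounds $|\nabla dx(u,\argdot)|_g$ by $\gamma h |u|_{g^e}|\argdot|_{g^e}$, while $|dx(\argdot)|_g \lesssim |\argdot|_{g^e}$ from the preceding part handles the remaining factor. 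Summing the two contributions yields \eqref{eqn:comparisonnablgge} with a constant depending on $n,\theta,C_0,C_1$.

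The main obstacle is the Riemannian polarisation identity with the stated $O(h^4)$ error, together with an error that depends controllably on both $C_0$ and $C_1$ (the latter appearing when one carries the Taylor expansion of $\dist^2$ to the order at which the leading curvature-free term cancels). Once this identity is established uniformly over the Karcher simplex, everything else is bookkeeping: decomposing $dx$ via $\sigma$, using the componentwise fullness bound, and for the second estimate, using the flat connection to trade a $\nabla^e$ on $x^*g$ for a $\nabla dx$ inside $g(\argdot,\argdot)$.
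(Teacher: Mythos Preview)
Your argument for \eqref{eqn:comparisonnablgge} is exactly the paper's: see Theorem~\ref{thm:EstimateForChristoffelSymbols}, which computes $\nabla^e x^*g(u,v,w) = g(\nabla dx(u,v),dx(w)) + g(dx(v),\nabla dx(u,w))$ and inserts the Hessian bound together with $\absval[x^*g]{\,\cdot\,} \simleq \absval[g^e]{\,\cdot\,}$.

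For \eqref{eqn:comparisongge1} your route is also correct and runs parallel to the paper's, though packaged differently at the step $\sigma^*g \approx g^e$. The paper observes that $\sigma^*g$ is the flat metric on $\stds$ with edge lengths $\bar\ell_{ij} = \absval{X_i - X_j}$, compares these to $\ell_{ij} = \dist(p_i,p_j)$ via the $\exp$-distortion estimate (Proposition~\ref{prop:compareElls}), and then invokes the stability of flat simplex metrics under edge-length perturbation (Proposition~\ref{prop:almostEqualMetricsFromEdgeLengths}, Corollary~\ref{cor:metrics from points est}). You instead expand $g(X_i,X_j)$ entrywise and use the coordinate bound $\absval{v^i} \lesssim \theta^{-1} h^{-1}\absval[g^e]{v}$ directly. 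Both amount to the same thing.

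One simplification you should make: the ``Riemannian polarisation identity'' you flag as the main obstacle is in fact immediate and needs no new Jacobi-field analysis. Since $\absval[g]{X_i}^2 = \dist^2(q,p_i)$ \emph{exactly}, the algebraic identity $g(X_i,X_j) = \tfrac12(\absval{X_i}^2 + \absval{X_j}^2 - \absval{X_i - X_j}^2)$ in $T_qM$ gives
\[
  E_{ij}(q) \;=\; \tfrac12\bigl(\dist^2(p_i,p_j) - \absval{X_i - X_j}^2\bigr),
\]
and this is $O(C_0 h^4)$ by Proposition~\ref{prop:compareElls} (no $C_1$ enters). So your worry about carrying a Taylor expansion of $\dist^2$ to higher order is unnecessary; the error term is precisely the edge-length distortion the paper already controls, and the constant $\beta$ depends only on $n,\theta,C_0$, consistent with the statement.
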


In loose terms, eqn.~\eqref{eqn:comparisongge1} gives second-order control
over $x$'s first derivatives, whereas
eqn.~\eqref{eqn:comparisonnablgge} gives first-order control over the second
derivatives.---
These theorems follow immediately from Theorems
\ref{thm:EstimateFordx},
\ref{thm:comparisongge},
\ref{thm:EstimateForNabladx}, and
\ref{thm:EstimateForChristoffelSymbols}
and are proven in Section \ref{sec:Approximation}.

\subsection{An application: The Poisson equation}
\label{sec:poisson}
Our estimates allow for proving finite element approximation results on Riemannian manifolds. Previous work in this area has restricted largely to hypersurfaces in $\R^{n+1}$ and submanifolds of Euclidean spaces (e.g., \cite{Dziuk13,Bertalmio01,Hildebrandt11}). Often these works use embedded polyhedra to construct finite elements, together with shortest distance maps that map Euclidean simplices to the manifold by assigning each point on the polyhedron to the closest point on the manifold, an approach dating back to \cite{Dziuk88}. In many cases, the barycentric finite elements and barycentric coordinates described here can be used in place of this construction.

As an example, consider the the Poisson equation on a closed Riemannian manifold, i.e.,
\begin{equation}
		\label{eq:poisson}
\laplace_g u=f ,
\end{equation}
where $\laplace_g$ is the Riemannian Laplacian.
Equipped with some space of Riemannian finite elements $V_{h}$ to be determined, consider the Galerkin approximation $u_h \in V_h$
that solves 
\begin{equation}
		\label{eq:finiteelementpoisson}
  \int_M g\left(d u_h, d v\right) \dvol_g = \int_M fv \dvol_g
  \qquad \text{for all $v \in V_h$},
\end{equation}
where $\dvol_g$ is the volume form for the metric $g$. Then for the solutions $u$ and $u_h$ of Equations \eqref{eq:poisson} and \eqref{eq:finiteelementpoisson}, respectively, the following estimates are analogues of the usual method of proving convergence of a Galerkin approximation:
(a) C\'{e}a's Lemma:
\[
\inorm[\Sob^1]{ u-u_{h}} \leq c \inf_{v\in V_{h}} \inorm[\Sob^1]{u-v}.
\]
and (b) an interpolation estimate:
\[
\inf_{v\in V_{h}} \inorm[\Sob^1]{u-v} \leq c h \inorm[\Sob^2] u
\qquad \text{for all $u \in \Sob^2$.}
\]

Given these, one can derive the estimates (in case
the Poisson problem is $\Sob^2$-regular)
\[
  \inorm[\Sob^1]{u-u_{h}}
  \leq c_1 \inf_{v\in V_{h}} \inorm[\Sob^1]{u-v}
  \leq c_2 h \inorm[\Sob^2] u
  \leq c_3 h \inorm[\Leb^2]f,
\]
proving convergence of $u_{h}$ to $u.$ 

The piecewise flat manifolds arising from our construction using Karcher simplices naturally provide piecewise linear finite element spaces.
While C\'{e}a's lemma is 
relatively straightforward in this setting, 
the interpolation lemma
requires control over
second derivatives of the 
map $x$ on each simplex. 
Theorem~\ref{thm:main1} provides the requisite estimate.


Similar to this example, the results of our Theorems \ref{thm:comparisongge} and
\ref{thm:EstimateForChristoffelSymbols} pave a natural way to carry over the
existing Finite Element approaches for heat flow \cite{Dziuk13}, Hodge decomposition
\cite{Polthier00}, a weak shape operator \cite{Hildebrandt06} and minimal submanifolds
\cite{Pinkall93} from embedded surfaces to arbitrary smooth manifolds with an
appropriate triangulation. These applications have been detailed in
\cite{Deylen14}.

\subsection{Related approaches using the Karcher Mean}
\label{sec:relatedWork}
Usage of the barycentric coordinates is mostly split into
three non-intersecting communities from statistics, geometry
and numerics.

The Karcher mean has been a standard tool in statistical environments,
where interpolation in nonlinear matrix spaces is very frequent,
for a relatively long time \cite{Moakher05}. We only refer to
overview articles \cite{Afsari11,Kendall13}
and the references therein. To these researchers, Karcher means
are an averaging method for given data points in a manifold.

Coming from the purely geometric perspective of triangulations,
Wintraecken \emph{et al.} have dealt with the distortion of barycentric coordinates
using Topogonov's angle theorem, arriving at a similar result for $x^*g - g^e$
as our eqn.~\eqref{eqn:comparisongge1} \cite{Dyer14}, butwith  no analogue for
eqn.~\ref{eqn:comparisonnablgge}.
Their focus now goes into the direction of well-definedness
and continuity of the coordinates across simplex boundaries \cite{Dyer16}.

In the theory of finite elements, the group around Grohs and Sander considers problems
about maps \emph{into} manifolds, whereas our results have their primary use when
mapping from a manifold into, say, the real numbers.
>From this perspective, barycentric coordinates are an interpolation
procedure for vertices which are themselves subject to an optimization
problem \cite{Sander12, Grohs13}. Important questions from this point of view
include higher-order interpolation \cite{Sander13}, test vector fields \cite{Sander16},
and the rigorous treatment of other problems 
such as nonlinear elliptical energies or function-space gradient flows
\cite{Hardering15}.

The definition of a Karcher simplex would carry over to length spaces
in a natural way, but we are not aware of any work in this direction yet.

\subsection{Structure of the paper}
The main part of this article will be structured in three parts:
First an overview of flat \textit{simplex} metrics and barycentric coordinates on
them, as we rely on several facts for them that are seldom found together
in one reference. This also includes the tangent space simplices which
we need for the last main part. Next comes the \textit{Riemannian} geometry part,
with a more thorough introduction of the main construction than we could
give in the introduction. In the final part we combine the Euclidean simplex estimates with
smooth Riemannian geometry estimates to bound the distortion
of the barycentric coordinate map to the Karcher simplex.

\begin{acknowledgement}
The authors would like to thank Konrad Polthier and Ulrich Brehm.
DG is partially supported by NSF grant DMS 0748283. SvD has been supported by
Deutsche Forschungsgemeinschaft (DFG) via Berlin Mathematical School.
\end{acknowledgement}

\section{Flat Metrics and Barycentric Coordinates}
\label{sec:flatMetrics}

Before considering general Riemannian metrics, we summarize a few facts about Euclidean simplices. 
The metric of a Euclidean simplex is uniquely determined by the length of its edges. Not every system of edge lengths, however, gives rise to a Euclidean simplex -- 
even if the triangle inequality is satisfied. 
Consider, for example, 
the situation in Figure~\ref{fig:tooThinTriangle}. 
In this section we discuss the existence of flat metrics from a given set of edge lengths.
\begin{figure}
	\begin{center}
	\begin{minipage}{3cm}
	\includegraphics[scale=0.3]{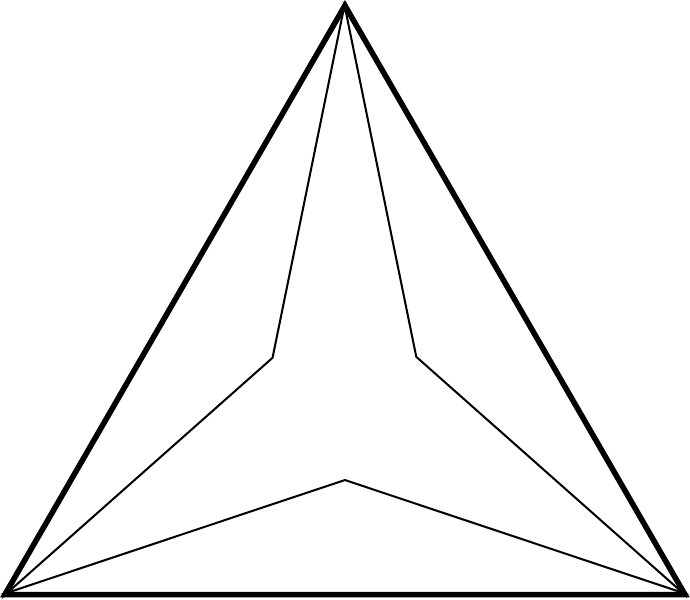}
	\end{minipage}
	\begin{minipage}{8cm}
	\caption{One equilateral triangle and three very thin triangles cannot form
	the boundary of a Euclidean tetrahedron.}
	\label{fig:tooThinTriangle}
	\end{minipage}
	\end{center}
\end{figure}

\subsection{Parametrizations of Euclidean simplices}
\label{sec:EuclidSimplices}

Before defining general simplices, we introduce two useful special ones.
\begin{definition}
  Let $e_0, \ldots, e_n$ be the standard basis of $\R^{n+1}$. The \emph{standard simplex} $\stds \subset \R^{n+1}$ 
  is the convex hull of the points $\{ e_0, \ldots, e_n \}$. The \emph{unit simplex} $\units\subset \R^n$ is the convex
  hull of the points $\{ 0,e_1,\ldots,e_n \}$.
\end{definition}

A general simplex may then be parametrized either over the standard simplex $\stds$ 
or the unit simplex $\units$.

\begin{definition}
  \label{def:simplex}
	A $n$-dimensional Euclidean simplex $s$ is the
	convex hull of $n+1$ points $p_0,\dots,p_n \in \R^m$.
	We define the barycentric map 
	\[
	  x: \stds \to s \ , \quad
	  x(\lambda) = \sum_{i=0}^n \lambda^i p_i \ ,
	\]
	which gives a parametrization of $s$ over the standard simplex $\stds$.
	We may also use a parametrization
	
	\[ 
	  y: \units \to s \ , \quad
	  y (u) = Au + p_0 \ ,
	\]
	 where $A$ is the matrix with columns $p_i - p_0$. 
\end{definition}
The mapping $y$ is usually called ``mapping onto the reference element,'' see, e.g.,~\cite[Thm. 4.4.4]{Brenner08}.
	
%
%

\subsection{Riemannian metrics on Euclidean simplices}

	A Euclidean simplex $s$ inherits a Riemannian metric from its ambient space. We may 
	then pull back this metric to either $\stds$ or $\units$. 
%

First, consider the case of $\stds$. 
	Let $v,w \in T_\lambda \stds$. Then the pullback of the Riemannian metric from $s$ onto $\stds$
	is given by
	\[
		\left\langle\sum_{i=0}^n v^i p_i, \sum_{j=0}^n w^j p_j\right\rangle_{\R^{m}}
		= \sum_{i,j=0}^n E_{ij} v^i w^j
  \]
  with
  \[
    E_{ij} = - \smallfrac 1 2 \|p_i - p_j\|_{\R^m}^2
	\]
	for $i,j = 0,\dots,n$ since $\sum{v^i}=0$ and $\sum{w^i}=0$. (See \cite[Thm. 1.2.9]{Fiedler11}.) 
	The metric is not
	determined in direction $e := (1,\dots,1) \perp_{\R^{n+1}} T_\lambda\stds$,
	so every $\tilde E_{ij} = E_{ij} + \rho$, $\rho \in \R$,
	gives the same metric on $\stds$. 
%
	Since $s$ is a Euclidean simplex, the symmetric matrix of negative squared edge lengths, $(E_{ij})$, yields a positive definite bilinear form when restricted to $T_\lambda\stds$. Conversely, if a system of prescribed ``edge lengths'' $\bar \ell_{ij}$
	is given such that the matrix with entries $-\bar \ell_{ij}^2$ yields a positive 
	definite bilinear from on $T_\lambda \stds$, then there exists a Euclidean simplex 
	with edge lengths $\bar \ell_{ij}$ (see~\cite[thm. 1.2.4]{Fiedler11}). The Riemannian 
	metric determined by $E$ will be generally referred to as $g^e$.

	Now consider the case of $\units$. The pullback of the Euclidean metric on $s$ to the unit simplex $\units$ is given by the bilinear form
	$g^\eucl_{ij} = {\sprod{p_i - p_0}{p_j - p_0}}_{\R^m}$,
	$i,j = 1,\dots,n$. Hence 
	\begin{align}\label{eq:gFromE}
		g^\eucl_{ij} =  E_{ij} - E_{0i} - E_{0j} \ ,
	\end{align}
	and the existence criterion for a Euclidean simplex with prescribed edge lengths
	is equivalent to positive
	definiteness of $g^\eucl_{ij}$ over $\R^n$, see
	\cite[Thm. 1.2.7]{Fiedler11} or \cite{Dekster87}.
	Note that~\eqref{eq:gFromE} is not true for $\tilde E_{ij}$
	as above, but relies on the specific choice that has
	been taken for $E_{ij}$ in the direction orthogonal to $T_\lambda \stds$.
	Since $\det g^\eucl_{ij} = ({n!} \vol s)^2$,
	a $(\theta,h)$-full Euclidean simplex satisfies
	$\det g^\eucl_{ij} \geq \theta^2 h^{2n}$.
	We can use this fact to estimate the eigenvalues of 
	$g^\eucl$.

\begin{lemma}
	\label{lem:eigenvaluesOfFirstFF}
	\sloppypar
	Let $p_0, \dots, p_n \in \R^m$ be the vertices of a $(\theta,h)$-full Euclidean 
	$n$-simplex, and let $g^\eucl_{ij} = {\sprod{p_i - p_0}{p_j - p_0}}_{\R^m}$
	denote the pull-back of its metric to the unit simplex $D$.
	Then the eigenvalues $\lambda_k$ of $g^\eucl$ satisfy
	\[
		\theta h n^{1-n}
		\leq \,\,
		\sqrt{\lambda_k}
		\,\, \leq \,\,
		h n.
	\]
\end{lemma}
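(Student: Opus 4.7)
The plan is to identify $g^\eucl$ as a Gram matrix and then pincer the eigenvalues: the upper bound comes from a direct operator-norm estimate, and the lower bound comes from combining the volume hypothesis with the upper bound via a determinant argument.

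First I would write $g^\eucl = A^{\mathrm T} A$, where $A$ is the $m \times n$ matrix whose $i$-th column is $p_i - p_0$. Then the eigenvalues $\lambda_k$ of $g^\eucl$ are exactly the squared singular values of $A$, so bounding $\sqrt{\lambda_k}$ is the same as bounding the singular values of $A$. For the upper bound, I would take any $v \in \R^n$ and estimate
\[
  \absval{Av} \;=\; \Bigabsval{\sum_{i=1}^n v^i (p_i - p_0)} \;\leq\; \sum_{i=1}^n \absval{v^i}\, h \;\leq\; h \sqrt n \, \absval v,
\]
using that all edges have length at most $h$ and Cauchy--Schwarz on $\sum \absval{v^i}$. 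Thus every singular value of $A$ is at most $h\sqrt n \leq hn$, giving the stated upper bound.

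For the lower bound, I would recall from the surrounding discussion that $\det g^\eucl = (n!\,\vol s)^2 \geq \theta^2 h^{2n}$ by $(\theta,h)$-fullness. Since $\det g^\eucl = \prod_k \lambda_k$ and each $\lambda_k \leq n h^2$ by the upper bound just established, the smallest eigenvalue satisfies
\[
  \lambda_{\min} \;\geq\; \frac{\prod_k \lambda_k}{(nh^2)^{n-1}} \;\geq\; \frac{\theta^2 h^{2n}}{n^{n-1} h^{2n-2}} \;=\; \frac{\theta^2 h^2}{n^{n-1}},
\]
and taking square roots yields $\sqrt{\lambda_{\min}} \geq \theta h \, n^{-(n-1)/2} \geq \theta h \, n^{1-n}$, where the last inequality uses $n^{(n-1)/2} \leq n^{n-1}$ for $n \geq 1$.

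There is no real obstacle here; the only mild subtlety is recognizing that the fullness hypothesis translates via $\det g^\eucl = (n!\,\vol s)^2$ into a quantitative lower bound on the product of the eigenvalues, which then becomes a lower bound on the smallest eigenvalue only after we have already controlled the largest ones from above. The claimed bound $n^{1-n}$ is in fact looser than what this argument actually produces, so the inequality comes with some slack.
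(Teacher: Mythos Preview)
Your proof is correct and follows the same overall architecture as the paper: identify $g^\eucl = A^{\mathrm T}A$, bound the largest singular value of $A$ directly, and then push the volume hypothesis through the determinant to bound the smallest one. The lower-bound step is essentially identical in both.

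The one genuine difference is how you obtain the upper bound. You use the straightforward estimate $\absval{Av} \leq h\sum_i\absval{v^i} \leq h\sqrt n\,\absval v$, which is both shorter and sharper (it gives $\sqrt{\lambda_k}\leq h\sqrt n$ rather than $hn$). The paper instead argues geometrically: it computes that the insphere of the unit simplex $D$ has radius at least $\frac{1}{2n}$, so any unit-length direction in $TD$ can be realized (up to scaling by $n$) as a difference $p-q$ of points in $D$; then $\absval{A(p-q)}$ is bounded by the diameter of $s$, which is at most $h$, yielding $\norm A \leq nh$. Your argument is more elementary and loses less in the constant; the paper's argument has the mild advantage of being coordinate-free and of making transparent that the operator norm of $A$ is controlled by the diameter of the image simplex relative to the inradius of the domain simplex.
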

\begin{proof}
	Using the matrix $A$ from Definition~\ref{def:simplex}, notice that $g^\eucl = A^t A$. Hence the eigenvalues of $g^\eucl$ are the
	squared singular values of the matrix $A$.
	For any $n$-simplex $s$, the radius $r$ of its insphere satisfies $\vol_n(s) = \frac r n \vol_{n-1}(\bdry s)$.
	As $D$ has volume $\frac 1 {n!}$ and $\bdry D$ has volume
	$\frac {n + \sqrt n}{(n-1)!}$, this gives
	\[
		r = \frac 1 {n + \sqrt n} \geq \frac 1 {2n} \ .
	\]
	This means that any vector $v \in TD$ with
	length $\frac 1 n \leq 2r$ can be represented as $p-q$
	with points $p,q \in D$. Its image in $s$ is
	$Ap - Aq$, which must be shorter
	than the diameter of $s$. Since the diameter
	of a Euclidean simplex is the length of its longest
	edge, it follows that $\norm A \leq nh$, which implies that $\lambda_{\max}
	\leq (nh)^2$. On the other hand,
	\[
		\lambda_{\min} (nh)^{2n-2}
		\geq \lambda_{\min} \lambda_{\max}^{n-1} \geq \det g^\eucl
		\geq \theta^2 h^{2n} \ ,
	\]
	which proves the claim.
\end{proof}

\begin{remark}
  One can also express the volume of $s$ using the matrix $E$ representing the metric of $s$ parametrized over the 
  standard simplex $\stds$. Let $e=(1,\dots,1)\in R^{n+1}$. The volume of $s$ is $\smallfrac 2{n!}(-\det M_+)^\half$,
	where
	\[
		M_+ = \begin{pmatrix} 0 & -\smallfrac 1 2 e^t \\
	    	                  -\smallfrac 1 2 e & E
	      	\end{pmatrix} \in \R^{(n+2)\times(n+2)} 
	\]
	is $-\frac 1 2$ times the \emph{Cayley--Menger} matrix
	\cite[Rem. 1.4.4]{Fiedler11}
	(a more classical reference is \cite[Thm. 40.1]{Blumenthal52}).
\end{remark}

The previous result allows the following estimate.
\begin{proposition} \label{prop:innerprod est}
	Let $v \in T_\lambda \stds$ and let $Y_0,\ldots,Y_n$ be vectors at a point on a Riemannian manifold 
	(M,g). If $g^e$ is a Euclidean metric on $\stds$ making $\stds$ a $(\theta,h)$-full simplex, 
	then we have the following estimate:
	\[
		\left|\sum_{i=0}^n v^iY_i \right|_g \leq \frac{n^n}{\theta h}\absval[g^e]v \sum_{i=0}^n \absval[g]{Y_i}.
	\]
\end{proposition}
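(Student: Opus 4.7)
The plan is to reduce the proposition to a purely linear-algebraic coefficient bound and then to invoke Lemma \ref{lem:eigenvaluesOfFirstFF}. Applying the triangle inequality in $(M,g)$,
\[
\left|\sum_{i=0}^n v^i Y_i\right|_g \;\leq\; \max_{0\leq i\leq n} |v^i| \cdot \sum_{i=0}^n \absval[g]{Y_i},
\]
so it suffices to prove $\max_i |v^i| \leq (n^n/(\theta h))\,\absval[g^e]v$, a statement that no longer involves $(M,g)$ at all.

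For this, I would switch from the standard-simplex to the unit-simplex parametrization of Definition \ref{def:simplex}. The constraint $\sum v^i = 0$ identifies $v \in T_\lambda \stds$ with $\tilde v = (v^1,\ldots,v^n) \in T_u \units \cong \R^n$ (by dropping the zeroth coordinate), and this identification is an isometry between $(T_\lambda \stds, g^e)$ and $(T_u \units, g^\eucl)$: both sides represent the same Euclidean metric on the image simplex $s$, as encoded by eqn.~\eqref{eq:gFromE}. Lemma \ref{lem:eigenvaluesOfFirstFF} then yields $|\tilde v|_{\R^n} \leq (n^{n-1}/(\theta h))\,\absval[g^e]v$. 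For $i \geq 1$ the bound $|v^i| \leq |\tilde v|_{\R^n}$ is immediate, and for $i=0$ the relation $v^0 = -\sum_{i \geq 1} v^i$ together with Cauchy--Schwarz gives $|v^0| \leq \sqrt n\,|\tilde v|_{\R^n}$. Both are absorbed by the constant $n^n/(\theta h)$, completing the proof.

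The only step requiring genuine care is the isometry identification: $g^e$ is formally a bilinear form on all of $\R^{n+1}$ and becomes positive definite only upon restriction to the hyperplane $\sum v^i = 0$, which is exactly where the unit-simplex parametrization takes its values. Once this is granted, the argument is essentially arithmetic, and no further geometric input is needed beyond Lemma \ref{lem:eigenvaluesOfFirstFF}. The asymmetric $L^\infty$-vs-$L^1$ form of the stated bound is precisely what allows the clean constant $n^n/(\theta h)$; a Cauchy--Schwarz estimate of $\sum v^i Y_i$ against a single $\ell^2$-norm of the $Y_i$ would introduce an additional $\sqrt{n+1}$ factor but is otherwise analogous.
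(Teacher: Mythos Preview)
Your argument is correct, and at its core it is the same as the paper's: pass to the unit-simplex coordinates $\tilde v=(v^1,\dots,v^n)$, invoke the eigenvalue bound of Lemma~\ref{lem:eigenvaluesOfFirstFF} to control $|\tilde v|_{\R^n}$ by $\tfrac{n^{n-1}}{\theta h}\absval[g^e]v$, and then absorb the remaining combinatorial factor into $n^n$. The only genuine difference is how the sum is split. You use the $L^\infty$--$L^1$ H\"older pairing on $\sum_{i=0}^n v^i Y_i$ directly, which forces you to treat $v^0=-\sum_{i\ge 1}v^i$ separately. The paper instead exploits $\sum v^i=0$ at the outset to rewrite
\[
\sum_{i=0}^n v^i Y_i \;=\; \sum_{i=1}^n v^i (Y_i - Y_0)
\]
and then applies Cauchy--Schwarz over the $n$ indices $i\ge 1$, bounding $\sqrt{\sum_{i\ge 1}\absval[g]{Y_i-Y_0}^2}$ by $\sum_{i=0}^n\absval[g]{Y_i}$. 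This avoids the separate treatment of $v^0$, but otherwise the two routes are equivalent and produce the same constant. Your remark about the isometry identification between $(T_\lambda\stds,g^e)$ and $(T_u\units,g^\eucl)$ via~\eqref{eq:gFromE} is exactly the ``pull back to the unit simplex'' step the paper gestures at.
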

\begin{proof}
	The key fact is that we can pull back to the unit simplex $D$ with pulled back metric $g^\eucl$ and estimate
	with Lemma \ref{lem:eigenvaluesOfFirstFF}:
	\begin{align*}
	  \left| \sum_{i=0}^n v^i Y_i\right|_g &=\left| \sum_{i=1}^n v^i (Y_i-Y_0)\right|_{g} \\
	  &\leq \sqrt{\sum_{i=1}^n\absval{v^i}^2}\sqrt{\sum_{i=1}^n\absval[g]{Y_i-Y_0}^2}\\
	  &\leq \frac{n^{n}}{\theta h}\absval[g^e]v \sum_{i=0}^n\absval[g]{Y_i}.
	\end{align*}
\end{proof}


As a straightforward consequence of the polarization identity, estimates for symmetric bilinear forms can be derived 
from estimates of the associated quadratic forms:
\begin{lemma}
\label{lem:fromNormToScalarProduct}
Let $T$ be a symmetric bilinear form on a vector space $V$, and let $g$ be an inner
product on $V$ (i.e., a symmetric and positive definite bilinear form). Suppose that $|T(v,v)| \leq C |v|_g^2$ for all vectors $v$ in $V$. Then
for all vectors $v,w\in V$, $\absval{T(v,w)} \leq C \absval[g]v \absval[g]w$.
\end{lemma}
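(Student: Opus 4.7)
The plan is to reduce the statement about $T(v,w)$ to the hypothesis, which only controls $T$ on the diagonal, by invoking the polarization identity. Since $T$ is symmetric bilinear, one has the identity
\[
  T(v,w) = \smallfrac 1 4 \bigl( T(v+w,v+w) - T(v-w,v-w) \bigr).
\]
Applying the assumption $|T(u,u)| \leq C |u|_g^2$ to both diagonal terms and then using the parallelogram identity $|v+w|_g^2 + |v-w|_g^2 = 2(|v|_g^2 + |w|_g^2)$, I obtain the preliminary bound
\[
  |T(v,w)| \leq \smallfrac C 2 \bigl( |v|_g^2 + |w|_g^2 \bigr).
\]

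This bound is off by a factor relative to the target inequality $|T(v,w)| \leq C\,|v|_g|w|_g$ (it gives $C$ rather than $C/2$ when the two vectors are unit), so the second step is a rescaling argument to sharpen it. For any $t>0$, bilinearity gives $T(v,w) = T(tv,\, w/t)$, so applying the preliminary bound to $(tv, w/t)$ yields
\[
  |T(v,w)| \leq \smallfrac C 2 \bigl( t^2 |v|_g^2 + t^{-2} |w|_g^2 \bigr).
\]
For $v,w \neq 0$, optimizing the right-hand side in $t$ (equivalently, AM--GM) with $t^2 = |w|_g/|v|_g$ yields exactly $C\,|v|_g|w|_g$. The degenerate cases where $v=0$ or $w=0$ are immediate from bilinearity, since then $T(v,w)=0$.

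There is no real obstacle: this is a standard piece of linear algebra, and the only minor subtlety worth highlighting is that the plain polarization identity alone is not sharp enough, which is why the one-parameter rescaling $v\mapsto tv$, $w\mapsto w/t$ is needed to recover the Cauchy--Schwarz-type form of the estimate.
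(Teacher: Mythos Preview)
Your argument is correct and is exactly the standard polarization/rescaling proof that the paper alludes to (the paper states the lemma as ``a straightforward consequence of the polarization identity'' and gives no further details). One small remark: your parenthetical ``it gives $C$ rather than $C/2$ when the two vectors are unit'' is a bit misleading, since for unit vectors the preliminary bound $\smallfrac C 2(|v|_g^2+|w|_g^2)=C$ already coincides with the target $C|v|_g|w|_g=C$; the rescaling is needed precisely when $|v|_g\neq|w|_g$, and is equivalent to normalizing both vectors to unit length before applying the preliminary estimate.
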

The following is a standard estimate. 
\begin{lemma}
		\label{lem:metriccompare}
	Let $g$ and $\bar g$ be inner products on $\R^n$ 
	such that all eigenvalues of $g$ (with respect to the Euclidean inner product) are larger than $\lambda_{\min} > 0$
	and $\absval{g_{ij} - \bar g_{ij}} \leq \eps n^{-1} \lambda_{\min}$.
	Then
	$\absval{(g - \bar g)$ $(v,v)} \leq \eps \absval[g]v^2$.
\end{lemma}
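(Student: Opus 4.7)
The plan is to expand the bilinear form in coordinates, apply the componentwise bound by hypothesis, and then compare the resulting Euclidean norm with the $g$-norm using the eigenvalue lower bound.

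First I would write
\[
  (g-\bar g)(v,v) = \sum_{i,j=1}^n (g_{ij}-\bar g_{ij})\,v^i v^j,
\]
and use the entrywise bound to obtain
\[
  \absval{(g-\bar g)(v,v)} \leq \eps\, n\inv \lambda_{\min} \Big(\sum_{i=1}^n\absval{v^i}\Big)^{\!2}.
\]
Next, a direct application of the Cauchy--Schwarz inequality (or, equivalently, the inequality between the $\ell^1$- and $\ell^2$-norms on $\R^n$) yields
\[
  \Big(\sum_{i=1}^n\absval{v^i}\Big)^{\!2} \leq n \sum_{i=1}^n (v^i)^2 = n\,\absval[\eucl]v^2,
\]
so the factor of $n$ cancels and we are left with
\[
  \absval{(g-\bar g)(v,v)} \leq \eps\, \lambda_{\min}\, \absval[\eucl]v^2.
\]

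Finally, the eigenvalue hypothesis on $g$ (with respect to the Euclidean inner product) translates to $\lambda_{\min}\,\absval[\eucl]v^2 \leq g(v,v) = \absval[g]v^2$, completing the estimate. There is no real obstacle here; the only thing to be careful about is that the $n\inv$ in the hypothesis is calibrated exactly to absorb the loss in passing from the $\ell^1$- to the $\ell^2$-norm, so the proof is essentially a one-line calculation once the three ingredients (entrywise bound, norm equivalence, eigenvalue bound) are chained in the right order.
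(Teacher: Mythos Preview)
Your argument is correct. The paper itself omits the proof entirely, merely labeling the lemma ``a standard estimate''; what you have written is exactly the standard argument one would supply, so there is nothing to compare.
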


%
%
%
%
%
%
%
%
%

We now apply this lemma to Euclidean metrics arising from simplices.
\begin{proposition}
	\label{prop:almostEqualMetricsFromEdgeLengths}
	There is a constant $\alpha = \alpha(n)$ with the
	following property:
	If $\ell_{ij}$ are the edge lengths of a Euclidean
	$n$-simplex that defines a $(\theta,h)$-full metric $g$
	on $\stds$, and $\bar \ell_{ij}$ define
	a second system of lengths (not a priori assumed to define a simplex) with
	$\absval{\ell_{ij} - \bar \ell_{ij}}
	\leq \alpha \eps \theta^2 \ell_{ij}$,
	where $\eps < \frac 1 2$,
	then there is a Euclidean $n$-simplex $\bar s$
	with edge lengths $\bar \ell_{ij}$,
	and its  Euclidean metric $\bar g$ over $\stds$
	satisfies $\absval{(g - \bar g)(v,v)}
	\leq \eps \absval[g] v^2 $ for every
	$v \in T\stds$.
\end{proposition}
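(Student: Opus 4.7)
The plan is to transfer the comparison to the unit simplex $\units$, where the metric is described by an explicit $n\times n$ Gram matrix and Lemmas \ref{lem:eigenvaluesOfFirstFF} and \ref{lem:metriccompare} apply directly. Write $E_{ij} = -\tfrac12 \ell_{ij}^2$ and $\bar E_{ij} = -\tfrac12 \bar\ell_{ij}^2$, and define symmetric bilinear forms $g^\eucl, \bar g^\eucl$ on $\R^n$ by the formula \eqref{eq:gFromE}. Since we do not yet know that $\bar s$ exists, $\bar g^\eucl$ is treated as a mere bilinear form until positive-definiteness is verified.

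The first step is an entrywise estimate. Factoring $\ell_{ij}^2 - \bar\ell_{ij}^2 = (\ell_{ij}-\bar\ell_{ij})(\ell_{ij}+\bar\ell_{ij})$ and using the hypothesis together with a normalizing requirement such as $\alpha \leq 1$ (so that $\ell_{ij}+\bar\ell_{ij} \leq 3\ell_{ij}$) gives $|E_{ij} - \bar E_{ij}| \leq c_1\,\alpha\eps\theta^2 h^2$ with an absolute constant $c_1$. Inserting this into \eqref{eq:gFromE} yields $|g^\eucl_{ij} - \bar g^\eucl_{ij}| \leq c_2\,\alpha\eps\theta^2 h^2$.

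The second step combines this with the lower eigenvalue bound $\lambda_{\min}(g^\eucl) \geq \theta^2 h^2 n^{2-2n}$ from Lemma \ref{lem:eigenvaluesOfFirstFF}. The essential cancellation is that the $\theta^2$ on the right-hand side of step one matches the $\theta^2$ in $\lambda_{\min}$; choosing $\alpha = \alpha(n)$ so that $c_2 \alpha \leq n^{-1}\cdot n^{2-2n}$ therefore forces $|g^\eucl_{ij} - \bar g^\eucl_{ij}| \leq \eps\, n^{-1}\lambda_{\min}(g^\eucl)$. Lemma \ref{lem:metriccompare} applied to $T := g^\eucl - \bar g^\eucl$ then delivers
\[
  \bigabsval{(g^\eucl - \bar g^\eucl)(v,v)} \leq \eps\, \absval[g^\eucl] v^2
  \qquad \forall v \in \R^n.
\]

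It remains to conclude existence of $\bar s$ and transfer the estimate back to $\stds$. Since $\eps < 1/2$, the inequality above implies $\bar g^\eucl(v,v) \geq (1-\eps)\,\absval[g^\eucl]v^2 > 0$, so $\bar g^\eucl$ is positive definite; by the Fiedler criterion quoted in Section~\ref{sec:EuclidSimplices} this yields a Euclidean $n$-simplex $\bar s$ with edge lengths $\bar\ell_{ij}$. Finally, under the linear identification $T_\lambda\stds \ni v \mapsto (v^1,\dots,v^n)\in\R^n$ one has $dx(v)=\sum_{i=1}^n v^i(p_i-p_0)=dy(v^1,\dots,v^n)$, so $g(v,v) = g^\eucl(v,v)$ and likewise for $\bar g$; the estimate on $\R^n$ therefore transfers verbatim to $T_\lambda\stds$. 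The only delicate point is the bookkeeping of $\theta$-factors in steps one and two—there is no genuine obstacle, but the proposition would fail without the $\theta^2$-factor in the hypothesis, which is precisely what matches the $\theta^2$ hidden in $\lambda_{\min}$.
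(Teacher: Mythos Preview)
Your proof is correct and follows essentially the same route as the paper: pass to the unit simplex via \eqref{eq:gFromE}, bound the entrywise perturbation of $g^\eucl$ by $c\,\alpha\eps\theta^2 h^2$, match this against the lower eigenvalue bound $\lambda_{\min}\geq \theta^2 h^2 n^{2-2n}$ from Lemma~\ref{lem:eigenvaluesOfFirstFF}, and conclude via Lemma~\ref{lem:metriccompare} together with $\eps<\tfrac12$ for positive definiteness. The only differences are cosmetic (you spell out the transfer back to $T_\lambda\stds$ and invoke the Fiedler criterion explicitly), not substantive.
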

\begin{proof}
	Let $x,y>0$ be real, such that
	$\absval{x-y} \leq \delta x$, for some $\delta < 1$, then
	$\absval{x^2 - y^2} \leq 3 \delta x^2$. 
	By assumption and since $\theta \leq 1$, we hence have
	$\absval{\ell_{ij}^2 - \bar \ell_{ij}^2} \leq
	3 \eps \alpha \theta^2 \ell_{ij}^2$ whenever $\alpha \leq 2$. Therefore,
	$\absval{E_{ij} - \bar E_{ij}} \leq \frac 3 2 \eps \alpha \theta^2 h^2$.
	Working over the unit simplex and using~\eqref{eq:gFromE} to define 
	$g^\eucl$ and $\bar g^\eucl$ we obtain
	that $\absval{g^\eucl_{ij} - \bar g^\eucl_{ij}} \leq \frac 9 2 \eps \alpha \theta^2 h^2$.
	
	Let $\lambda_{\min}$ be the smallest eigenvalue of $g^\eucl$. 
	If $\alpha$ is chosen so small that $\frac 9 2 \alpha n  \leq n^{2-2n}$, 
	then by Lemma \ref{lem:eigenvaluesOfFirstFF} we have that $\frac 9 2 \alpha n \theta^2 h^2
	\leq \lambda_{\min}$. By Lemma \ref{lem:metriccompare},
	we get $\absval{(g - \bar g)(v,v)} \leq
	\eps \absval[g] v^2$. In particular,
	$\bar g$ is positive definite since $\eps < \frac 1 2$.
%
\end{proof}

\subsection{Euclidean simplices from geodesic simplices}

Let $(M,g)$ be  a complete Riemannian manifold, and let $p\in M$. Then Proposition~\ref{prop:almostEqualMetricsFromEdgeLengths} 
can be used to compare the flat metric on a Euclidean simplex whose edge lengths are defined by geodesic
distances $\dist(\exp_p v, \exp_p w)$ to the flat metric on the Euclidean simplex whose edge lengths are defined by distances $\absval[g_p]{v-w}$ in the tangent space at $p$. 
In order to use the conclusions of  Proposition~\ref{prop:almostEqualMetricsFromEdgeLengths}, 
we require estimates on the distortion induced by the exponential map. 
In this section, we provide these estimates.

	
Before discussing distortion induced by the exponential map, we compare
lengths of geodesics for different metrics. 

\begin{proposition}
\label{prop:geodesiccompare}
  Let  $g$ and $g'$ be Riemannian metrics on an open set $U\subset \R^n$ and let 
  $\gamma$ and $\gamma'$ be minimizing geodesics for $g$ and $g'$ contained entirely in $U$
  such that $\gamma(0) = \gamma'(0)$ and $\gamma(1)=\gamma'(1)$. If there exists $0\leq \alpha<1$ such that 
  \[
    \big|\absval[g]{v} - \absval[g']{v}\big| \leq	\alpha \absval[g]{v}
  \]
  for all $v\in TU$, then the lengths of $\gamma$ and $\gamma'$ satisfy
  \[
    | l_g(\gamma) - l_{g'}(\gamma') | \leq \alpha l_g(\gamma) \ .
  \] 
\end{proposition}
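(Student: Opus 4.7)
The plan is to derive the length comparison from the pointwise norm comparison, applied to both curves, and then exploit the minimizing property of each geodesic with respect to its own metric.

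First I would observe that the hypothesis $\big|\absval[g]{v} - \absval[g']{v}\big| \leq \alpha \absval[g]{v}$ is equivalent to the two-sided bound $(1-\alpha)\absval[g]{v} \leq \absval[g']{v} \leq (1+\alpha)\absval[g]{v}$ for every $v \in TU$. Integrating this bound over an arbitrary $C^1$ curve $c \colon [0,1] \to U$ immediately gives the length comparison
\[
  (1-\alpha)\,\ell_g(c) \;\leq\; \ell_{g'}(c) \;\leq\; (1+\alpha)\,\ell_g(c).
\]
This is the only analytic ingredient; everything else is a short chain of inequalities.

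Next I would use the fact that $\gamma$ is $g$-minimizing and $\gamma'$ is $g'$-minimizing among curves from $\gamma(0)$ to $\gamma(1)$ in $U$, so in particular $\ell_g(\gamma) \leq \ell_g(\gamma')$ and $\ell_{g'}(\gamma') \leq \ell_{g'}(\gamma)$. Combining these with the length comparison applied to $\gamma$ and to $\gamma'$ respectively gives, on one hand,
\[
  \ell_{g'}(\gamma') \;\leq\; \ell_{g'}(\gamma) \;\leq\; (1+\alpha)\,\ell_g(\gamma),
\]
and, on the other hand,
\[
  \ell_{g'}(\gamma') \;\geq\; (1-\alpha)\,\ell_g(\gamma') \;\geq\; (1-\alpha)\,\ell_g(\gamma).
\]
Subtracting $\ell_g(\gamma)$ in both inequalities yields $-\alpha\,\ell_g(\gamma) \leq \ell_{g'}(\gamma') - \ell_g(\gamma) \leq \alpha\,\ell_g(\gamma)$, which is the desired bound.

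There is no real obstacle: the argument only uses the pointwise norm bound and the minimizing property of each curve with respect to its own metric. The one subtlety to watch is that both minimizing comparisons $\ell_g(\gamma) \leq \ell_g(\gamma')$ and $\ell_{g'}(\gamma') \leq \ell_{g'}(\gamma)$ require both curves to be admissible competitors, which is guaranteed by the hypothesis that they share endpoints and both lie in $U$.
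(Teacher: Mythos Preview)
Your proof is correct and follows essentially the same approach as the paper: integrate the pointwise norm comparison along each curve and combine with the minimizing property of each geodesic for its own metric. If anything, your organization is slightly cleaner than the paper's, which arrives at the same two inequalities but via a more roundabout algebraic manipulation in the first direction.
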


\begin{proof}
  We first note that 
  \[
    (1-\alpha)\absval[g]{v} \leq 
       \absval[g']v.
  \]
  Since $\gamma$ is minimizing for $g$, we have
  \begin{align*}
	  (1-\alpha) l_{g'}(\gamma') 
        &= (1-\alpha)\int_0^1 \absval[g]{\dot\gamma'}dt 
        + (1-\alpha)\int_0^1 (\absval[g']{\dot\gamma'} - \absval[g]{\dot\gamma'})dt \\
        &\geq  (1-\alpha)l_{g}(\gamma')-\alpha(1-\alpha) \int_0^1 \absval[g]{\dot\gamma'} dt\\
        & \geq (1-\alpha)l_{g}(\gamma)-\alpha l_{g'}(\gamma').
	\end{align*}  
	It follows that  
	\[
	  l_{g}(\gamma) - l_{g'}(\gamma') \leq \alpha l_g(\gamma).
	\]
	
	Similarly, since $\gamma'$ is minimizing for $g'$, we have
	\begin{align*}
	  l_{g}(\gamma) 
        &= \int_0^1 \absval[g']{\dot\gamma}dt + \int_0^1 (\absval[g]{\dot\gamma} - \absval[g']{\dot\gamma})dt \\
        &\geq  l_{g'}(\gamma)-\alpha l_g(\gamma)\\
        & \geq l_{g'}(\gamma')-\alpha l_g(\gamma),
	\end{align*}  
	so 
	\[
	  l_{g'}(\gamma') - l_{g}(\gamma) \leq \alpha l_g(\gamma).
	\]
\end{proof}	
	
The main comparison result of this section is the following.
\begin{proposition}	 \label{prop:compareElls}
	Let $(M,g)$ be a complete Riemannian manifold. Let $p\in M$, and let $U\subset T_p M$ 
	be a ball of radius $r$ centered at $0\in T_p M$ such that $\exp_p(U)$ is geodesically convex. Pull back the metric $g$ to $U$ via $\exp_p$. 

	Suppose $c:[0,\tau] \to U$ is a geodesic (with respect to the pulled back metric) and
	$b:[0,\tau] \to U$ is a straight line with same endpoints
	$b(0) = c(0)$ and $b(\tau) = c(\tau)$. There are constants
	$\alpha = \alpha(C_0,n)$ and
	$\beta=\beta(n)$ such that if $r < \alpha$, then the
	$g$-lengths of $c$ and the Euclidean length of $b$ satisfy
	\[
		\absval{l_g( c) - l_\eucl(b)} \leq \beta C_0 r^2 l_g( c) \ .
	\]
\end{proposition}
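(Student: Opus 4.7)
The plan is to work in normal coordinates at $p$ and reduce the statement to Proposition~\ref{prop:geodesiccompare} applied with $g' = g^\eucl$, the flat metric on $U \subset T_pM$ induced by the inner product $g_p$.

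First, I would invoke the Jacobi-field expansion of the pulled-back metric around the origin,
$g_{ij}(v) = \delta_{ij} - \tfrac 1 3 R_{ikjl}(p)v^k v^l + O(\absval v^3)$,
whose cubic remainder can be controlled in terms of $C_0$ and $C_1$. Since $\absval v \leq r$ and we are free to enforce a standing smallness hypothesis $r < \alpha(C_0,n)$, the cubic error is absorbed into the quadratic one, yielding a pointwise bound of the shape $\absval{g_{ij}(v) - \delta_{ij}} \leq \beta_0(n) C_0 r^2$.

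Second, I would deduce from this that the Riemannian and Euclidean norms on $T_vU$ are close: for every $w \in T_vU$,
$\bigabsval{\absval[g] w - \absval[\eucl] w} \leq \beta_1(n) C_0 r^2 \absval[\eucl] w$,
using $\absval{\sqrt{1+s} - 1} \leq \absval s$ for $\absval s \leq \half$. Shrinking $\alpha$ further if necessary, this upgrades (at the cost of enlarging the constant) to
$\bigabsval{\absval[g] w - \absval[\eucl] w} \leq \beta_2(n) C_0 r^2 \absval[g] w$,
which is exactly the hypothesis of Proposition~\ref{prop:geodesiccompare} with its role of $\alpha$ played by $\beta_2(n) C_0 r^2$; by a final choice of $\alpha(C_0,n)$ we also ensure $\beta_2(n) C_0 r^2 < 1$.

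Third, I would check the minimality conditions: the curve $c$ is a minimizing $g$-geodesic between its endpoints because $\exp_p(U)$ is geodesically convex, and the straight segment $b$ is trivially the minimizing $g^\eucl$-geodesic between the same endpoints, staying inside $U$ because $U$ is a Euclidean ball and hence convex. Proposition~\ref{prop:geodesiccompare} then delivers
$\absval{l_g(c) - l_\eucl(b)} \leq \beta_2(n) C_0 r^2 \, l_g(c)$,
i.e.\ the desired estimate with $\beta := \beta_2$.

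The main technical point is the normal coordinate expansion: one must keep enough quantitative control on the cubic remainder so that its $C_1$-contribution is indeed dominated by $C_0 r^2$ under the smallness condition $r < \alpha(C_0,n)$, which is why only $C_0$ appears in the final bound. Apart from this (standard) Jacobi-field estimate, the argument is a clean application of Proposition~\ref{prop:geodesiccompare}, and the convexity of $\exp_p(U)$ handles all geometric subtleties about minimality and stay-in-domain.
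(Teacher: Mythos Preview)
Your overall strategy---establish a pointwise norm comparison $\bigabsval{\absval[g] w - \absval[\eucl] w} \leq \beta C_0 r^2 \absval[g] w$ on $U$ and then invoke Proposition~\ref{prop:geodesiccompare}---is exactly what the paper does. The paper simply cites \cite{Kaul76} for the norm comparison and then applies Proposition~\ref{prop:geodesiccompare}; your additional check of the minimality hypotheses is correct and makes explicit what the paper leaves implicit.

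There is, however, a genuine gap in how you obtain the norm comparison. You propose the Taylor expansion $g_{ij}(v) = \delta_{ij} - \tfrac13 R_{ikjl}(p)v^kv^l + O(\absval v^3)$ and then say the cubic remainder, which you acknowledge depends on $C_1$, can be ``absorbed into the quadratic one'' under a smallness condition $r < \alpha(C_0,n)$. But this absorption requires $C_1 r^3 \lesssim C_0 r^2$, i.e.\ $r \lesssim C_0/C_1$, so your $\alpha$ would have to depend on $C_1$ as well---contradicting the statement, which asserts $\alpha = \alpha(C_0,n)$ only. Your final paragraph flags this as ``the main technical point'' but does not resolve it; in fact it cannot be resolved along the Taylor-expansion route.

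The fix is to obtain the metric distortion estimate via Rauch-type Jacobi field comparison rather than Taylor expansion: since $d\exp_p$ is governed by Jacobi fields with $J(0)=0$, and Rauch comparison bounds these purely in terms of sectional curvature bounds (no $\nabla R$), one gets $\bigabsval{\absval[g] w - \absval[\eucl] w} \leq \beta(n) C_0 r^2 \absval[g] w$ directly, with no $C_1$ entering. This is precisely the content of the Kaul reference the paper invokes.
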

\begin{proof}
	For a geodesic ball with radius $r<1$, there is a constant $\beta=\beta(n)$ with
	\[
		\bigabsval{\absval[g]v - \absval[\eucl] v} \leq \beta C_{0}r^2 \absval[g] v \ ,
	\]
	for all $v\in TU$ \cite{Kaul76}. If we suppose $r<\alpha$ with $\alpha<1$ so small
	that 
	\[
	  \beta C_0\alpha^2 <1\ ,
	\]
  then the proposition follows from Proposition \ref{prop:geodesiccompare}.
\end{proof}
The following is the main result we will use in Section \ref{sec:Approximation}.
\begin{corollary} \label{cor:metrics from points est}
		Let $(M,g)$ be a complete Riemannian manifold. Let $a\in M$
		and suppose that $p_0,\ldots, p_k$ are points in a convex
		geodesic ball centered at $a$ such that the lengths $\ell_{ij}=\dist(p_i,p_j)$
		form a Euclidean simplex and let $g^e$ denote the induced metric on $\stds$. Let $X_i\in T_aM$ be defined such that $\exp_a X_i = p_i$
		and let $\sigma:\stds \to T_aM$ denote the barycentric coordinates to the simplex determined by the $X_i$. 
		
		There are $\alpha = \alpha(n)$ and $\beta=\beta(n)$ such that if $g^e$ is a  $(\theta,h)$-full metric
		with $h < \alpha$ then 
		\[
		\big|\absval[g^e]v^2 - \absval[g]{\sigma(v)}^2\big|
			\leq \beta C_0 \theta^{-2}h^2 \absval[g^e] v^2 
		\] 
		for every $v \in T\stds$.
		It follows that there is an $\alpha' = \alpha'(n,C_0,\theta)$ so that if furthermore $h<\alpha'$, then 
		\[
		\big|\absval[g^e]v^2 - \absval[g]{\sigma(v)}^2\big|
			\leq 2\beta C_0 \theta^{-2}h^2 \absval[g]{\sigma(v)}^2.
		\]
\end{corollary}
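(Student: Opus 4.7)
The plan is to reduce everything to Proposition \ref{prop:almostEqualMetricsFromEdgeLengths} by introducing an auxiliary Euclidean metric $\bar g^e$ on $\stds$, namely the one induced by the honest Euclidean simplex in $T_aM$ with vertices $X_0,\dots,X_k$. Since $\sigma$ is affine into the Euclidean space $(T_aM,g_a)$, one has $\absval[g]{\sigma(v)}^2 = \bar g^e(v,v)$ by construction, so the first inequality of the corollary amounts to the metric comparison
\[
  \absval{g^e(v,v) - \bar g^e(v,v)} \leq \beta C_0 \theta^{-2} h^2\, \absval[g^e] v^2.
\]

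These two Euclidean metrics arise from two competing systems of edge lengths: $\ell_{ij} = \dist(p_i,p_j)$ for $g^e$, and $\bar\ell_{ij} = \absval[g_a]{X_i - X_j}$ for $\bar g^e$. My first step is to compare them. Pulling $g$ back along $\exp_a$ turns the straight segment from $X_i$ to $X_j$ in $T_aM$ into a curve of Euclidean length $\bar\ell_{ij}$, whereas the minimizing geodesic of the pulled-back metric between the same endpoints has length $\ell_{ij}$. Applying Proposition \ref{prop:compareElls} on a ball of radius $O(h)$ around $0\in T_aM$ containing both curves then produces
\[
  \absval{\ell_{ij} - \bar\ell_{ij}} \leq \beta_1 C_0 h^2\, \ell_{ij}
\]
for some $\beta_1 = \beta_1(n)$, once $h$ is small enough.

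Feeding this bound into Proposition \ref{prop:almostEqualMetricsFromEdgeLengths} with $\eps := \beta_1 C_0 h^2 / (\alpha_0 \theta^2)$, where $\alpha_0 = \alpha_0(n)$ is that proposition's constant, yields the displayed comparison between $g^e$ and $\bar g^e$ with $\beta = \beta_1 / \alpha_0$, which is the first assertion. The $\eps < \half$ clause in that proposition was only needed there to secure positive definiteness of the approximating metric; here $\bar g^e$ is already the genuine Euclidean metric of the non-degenerate simplex spanned by the $X_i$, so the quantitative estimate itself descends directly from Lemma \ref{lem:metriccompare}. The second inequality of the corollary is then pure algebra: imposing $h < \alpha'$ small enough (depending now on $n$, $C_0$, $\theta$) so that $\beta C_0 \theta^{-2} h^2 \leq \half$, the first inequality forces $\absval[g^e]v^2 \leq 2\,\absval[g]{\sigma(v)}^2$, and substituting this into the right-hand side gives the claim with constant $2\beta$.

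The main technical worry is ensuring that $\exp_a$ is a diffeomorphism, and that Proposition \ref{prop:compareElls} applies, over a ball whose radius is controlled by $h$ rather than by the (possibly much larger) radius of the ambient convex ball around $a$. A priori $\absval[g_a]{X_i} = \dist(a,p_i)$ is only bounded by the latter; but in the setup of the corollary and in its intended use in Section \ref{sec:Approximation} the point $a$ lies among or near the $p_i$, so $\dist(a,p_i) = O(h)$ and all straight segments $\overline{X_i X_j}$ fit into a ball of radius $O(h)$ around $0$. That is what makes the radius entering Proposition \ref{prop:compareElls} proportional to $h$ and allows the threshold on $h$ to take the form asserted.
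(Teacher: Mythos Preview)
Your proposal is correct and follows essentially the same route as the paper: compare the edge lengths $\ell_{ij}$ and $\bar\ell_{ij}$ via Proposition~\ref{prop:compareElls}, feed the result into Proposition~\ref{prop:almostEqualMetricsFromEdgeLengths} with $\eps$ proportional to $C_0\theta^{-2}h^2$, and then derive the second inequality from the first by the triangle-inequality bound $\absval[g^e]v^2 \leq \bigabsval{\absval[g^e]v^2 - \absval[g]{\sigma(v)}^2} + \absval[g]{\sigma(v)}^2$ once $h$ is small enough. Your closing remark about the radius in Proposition~\ref{prop:compareElls} needing to be $O(h)$ is a legitimate point that the paper leaves implicit; it is justified exactly as you say, since in the application $a = x(\lambda)$ lies with the $p_i$ in a ball of radius $h$.
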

Note that $\absval[g]{\sigma(v)}$ is equal to the length of the vector $v$ measured in the Euclidean metric $g^e$ on $\stds$ determined by the lengths $\bar{\ell}_{ij}=|X_i-X_j|$.
\begin{proof}
	The fact that there is a $\gamma = \gamma(n)$ such that for each corresponding side,
		\[
 		\absval{\ell_{ij} - \bar{\ell}_{ij}} \leq 	 \gamma C_{0} r^2 \ell_{ij}
		\]
	follows from Proposition \ref{prop:compareElls}.
	Taking 
	\[
		\eps = \frac\gamma\alpha C_0 \theta^{-2} h^2 
	\]
	in Proposition \ref{prop:almostEqualMetricsFromEdgeLengths} (using the $\alpha$ from that proposition) we get that
		\[
		\big|\absval[g^e]v^2 - \absval[g]{\sigma(v)}^2\big|
			\leq \frac\gamma\alpha C_0 \theta^{-2} h^2\absval[g^e] v^2 .
		\] 
	The last statement follows from the fact that 
		\[
		 \absval[g^e] v^2 \leq \big|\absval[g^e]v^2 - \absval[g]{\sigma(v)}^2\big| + \absval[g]{\sigma(v)}^2.
		\]
	so we need to take $\alpha'$ small enough so that $\beta C_0 \theta^{-2}(\alpha')^2<1/2$.
\end{proof}
One last fact that will be important in the sequel.
\begin{proposition} \label{prop:inj radius}
  In Proposition \ref{prop:compareElls} and Corollary \ref{cor:metrics from points est}, we may replace the convexity assumption by making the constants $\alpha$ depend also on the injectivity radius $\iota$.
\end{proposition}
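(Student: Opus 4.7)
The plan is to locate precisely where geodesic convexity of $\exp_p(U)$ is used in Proposition \ref{prop:compareElls} and Corollary \ref{cor:metrics from points est}, and then to show that this convexity is automatic once $r$ (respectively $h$) is small compared to both the injectivity radius $\iota$ and the curvature-controlled quantity $1/\sqrt{C_0}$.

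First I would audit the two earlier proofs. Convexity of $\exp_p(U)$ is invoked in two places. Firstly, it guarantees that $\exp_p$ is a diffeomorphism from $U$ onto its image, which is what allows us to pull back $g$ to $U$ and apply the pointwise estimate $\bigl|\absval[g]{v} - \absval[\eucl]{v}\bigr| \leq \beta C_0 r^2 \absval[g]{v}$ of \cite{Kaul76}. Secondly, it ensures that for any two endpoints in $U$ the $g$-minimizing geodesic $c$ (which is unique by convexity) lies entirely in $U$, so that the pointwise metric comparison can be integrated along $c$ and Proposition \ref{prop:geodesiccompare} can be applied. The Euclidean segment $b$ is automatically contained in $U$ because $U$ is a Euclidean ball.

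Next I would invoke the standard convexity radius estimate (see, e.g., Chavel's \emph{Riemannian Geometry}, or the classical references of Whitehead and Cheeger--Ebin): on a complete Riemannian manifold with $\inorm[\infty]{R} \leq C_0$ and injectivity radius at least $\iota$, every geodesic ball centered at $p$ of radius strictly less than
\[
  r_{\mathrm{conv}} := \min\!\left(\tfrac{1}{2}\iota_g,\; \tfrac{\pi}{4\sqrt{C_0}}\right)
\]
is geodesically convex (with the convention $\pi/\sqrt{C_0} = +\infty$ when $C_0=0$). Consequently, the injectivity of $\exp_p$ on $U$ and the containment of all relevant minimizing geodesics in $U$ are both automatic as soon as $r < r_{\mathrm{conv}}$.

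The conclusion then follows by simply redefining the threshold: replace the old $\alpha = \alpha(n, C_0)$ from Proposition \ref{prop:compareElls} by
\[
  \tilde{\alpha} \;=\; \min\!\left(\alpha(n,C_0),\; \tfrac{1}{2}\iota_g,\; \tfrac{\pi}{4\sqrt{C_0}}\right),
\]
which now depends on $n$, $C_0$, and $\iota_g$. With $r < \tilde\alpha$, the ball $\exp_p(U)$ is convex without any hypothesis being imposed, and Proposition \ref{prop:compareElls} applies verbatim. The same substitution in Corollary \ref{cor:metrics from points est} (applied to the ball centered at $a$ of radius proportional to $h$) yields the corresponding statement there, since the hypothesis $h < \alpha'$ can be strengthened to $h < \min(\alpha', \tilde\alpha)$ without altering the proof. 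I do not anticipate any real obstacle; the only point that requires some care is making the dependence of $\tilde\alpha$ on $\iota$ and $C_0$ explicit and checking that the curvature-dependent threshold is vacuous in the flat case so that no artificial upper bound on $h$ is introduced.
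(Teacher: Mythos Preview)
Your proposal is correct and follows the same approach as the paper: both arguments rest on the well-known lower bound for the convexity radius in terms of $C_0$ and $\iota_g$. The paper dispatches this in a single sentence without making the bound explicit, whereas you spell out the constant $r_{\mathrm{conv}}$ and audit where convexity is actually used; this extra detail is harmless and arguably clearer, but the underlying idea is identical.
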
 
\begin{proof}
  This is the well-known fact that every point has a convex geodesic ball around it, and the radius of that ball can be estimated by the curvature bound $C_0$ and the injectivity radius $\iota$.
\end{proof}


\section{Barycentric coordinates from center of mass}
\label{sec:defBarycCoords}


\subsection{Definition of the barycentric coordinate map}

Let $(M,g)$ be a complete $m$-dimensional Riemannian manifold, $m > 1$, and $\stds$ be the
	$n$-dimen\-sional standard simplex $\{\lambda \in \R^{n+1} \with \lambda^i \geq 0,
	\sum \lambda^i = 1\}$. For points $p_0,\dots,p_n \in M$, consider the function
	\[
		E: M \times \stds \to \R
	\]
	given by 
	\[
		E(a,\lambda) = \lambda^0 \dist^2(a,p_0) + \dots + \lambda^n \dist^2(a,p_n).
	\]
	
The minimizer of $E(\argdot, \lambda)$, if it exists, is called the \textbf{center of mass} for the measure
$\sum \lambda^i \delta_{p_i}$, where $\delta_p$ is the Dirac delta point mass at the point $p$.
  It is expecially useful to see that the center of mass can be formulated
as the zero of a function.

\begin{proposition}[\cite{Karcher77}]
  \label{prop:criticalptsofE}
	Local minimizers of $E(\argdot, \lambda)$ for fixed $\lambda$
	are zeroes of the section $F: M \times \stds \to TM$ given by
  \begin{equation}
  \label{eq:Fdef}
		F(a,\lambda) = \lambda^0 X_0|_a + \dots + \lambda^n X_n|_a,
		\qquad \text{where} \quad
	  X_i = \frac 1 2 \grad \dist^2(\argdot, p_i).
	\end{equation}
\end{proposition}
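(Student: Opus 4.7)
The plan is to compute $\grad_a E(\argdot, \lambda)$ explicitly and verify that it equals $2F(a,\lambda)$, from which the claim follows by the first-order necessary condition for a local minimum.

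First I would fix $\lambda \in \stds$ and restrict attention to a region $U \subset M$ on which every squared distance function $\dist^2(\argdot, p_i)$ is smooth; this is automatic inside a convex geodesic ball containing all $p_i$, since on such a region each $p_i$ lies outside the cut locus of every $a \in U$, and the classical formula $\grad_a \dist^2(\argdot,p_i) = -2 \exp_a^{-1}(p_i)$ applies. Consequently $E(\argdot,\lambda)$ is smooth on $U$, and by linearity of the gradient operator together with the fact that $\lambda^i$ are constants (with respect to $a$),
\[
  \grad_a E(\argdot,\lambda)
  \;=\; \sum_{i=0}^n \lambda^i \grad_a \dist^2(\argdot,p_i)
  \;=\; 2 \sum_{i=0}^n \lambda^i X_i|_a
  \;=\; 2\, F(a,\lambda).
\]

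From here the proposition is immediate: if $a$ is a local minimizer of the smooth function $E(\argdot,\lambda)$, then the first-order condition $\grad_a E(\argdot,\lambda) = 0$ holds, and dividing by $2$ yields $F(a,\lambda) = 0$.

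The only genuine subtlety is justifying the differentiation of $\dist^2(\argdot,p_i)$, i.e., ensuring that local minimizers we care about do not sit on the cut locus of any $p_i$. In the Karcher-simplex setting of this paper one always works inside a convex geodesic ball around the data points (whose existence is guaranteed by the injectivity radius and curvature bounds, cf.~Proposition~\ref{prop:inj radius}), so this issue does not arise and no further argument is needed. Thus the proof reduces to the gradient computation above together with the elementary fact that local minimizers of a smooth function are critical points.
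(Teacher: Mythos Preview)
Your argument is correct: computing $\grad_a E(\argdot,\lambda) = 2F(a,\lambda)$ by linearity, then invoking the first-order necessary condition, is exactly the standard route. The paper itself does not give a proof of this proposition; it merely cites \cite{Karcher77} for the statement, so there is nothing to compare against beyond noting that your computation is the expected one.
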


  Given that the center of mass is just a solution to $F=0$, where $F$ is given by Equation \ref{eq:Fdef}, 
  one can apply the implicit function theorem to get a solution. One caution is that in order to
  apply the implicit function theorem, one needs to trivialize the bundle $TM$, and hence we must
  choose a connection to do the trivialization. It is natural to use  
  the Riemannian connection. In later computations, we will see the connection appear
  in formulas for differentials of mappings (not just derivatives of forms and vector fields),
   and this is because the definition of the mapping $F$ requires this trivialization.

The existence of the center of mass is hard to prove in general,
cf. \cite{Sander13}. For sufficiently small balls, however, there is a unique minimizer. The 
following proposition is a generalization of one in \cite{Karcher77} due to Kendall \cite{Kendall90}
(see also \cite{Afsari11}).
\begin{proposition}
	If the points $p_i$ lie in a ball whose radius is less than half the convexity radius,
	then $E(\argdot, \lambda)$ has a unique minimizer.
\end{proposition}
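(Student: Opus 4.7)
The plan is to exploit strict geodesic convexity. Under the stated radius hypothesis, each squared distance function $\dist^2(\argdot,p_i)$ is strictly geodesically convex on the ball $B$ in question, so that $E(\argdot,\lambda)$, being a convex combination with nonnegative weights summing to one, is itself strictly geodesically convex. Uniqueness of the minimizer then follows immediately once existence of a minimizer inside $B$ is established.

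First I would record the necessary convexity lemma: if $r$ is less than half the convexity radius, then $B = B_r(c)$ is geodesically convex, and any two of its points are joined by a unique minimizing geodesic $\gamma$ of length at most $2r$. Consequently, for each fixed $p \in B$, the function $t \mapsto \dist^2(\gamma(t),p)$ is smooth and has strictly positive second derivative along $\gamma$; this is the Hessian estimate behind Karcher's original argument, and the radius bound is what guarantees that its hypotheses hold uniformly. Taking a convex combination preserves strict positivity, so $E(\argdot,\lambda)$ is strictly geodesically convex on $B$.

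Next I would settle existence. The closed ball $\bar B$ is compact by completeness, and $E(\argdot,\lambda)$ is continuous, so a minimizer $a^*$ is attained on $\bar B$; what must be ruled out is $a^* \in \bdry B$. For this I would apply the first variation formula: letting $\eta \in T_{a^*} M$ denote the initial tangent of the unique geodesic from $a^*$ back to the center $c$, the one-sided derivative is
\[
   \ddt\Big|_{t=0^+} E\bigl(\exp_{a^*}(t\eta), \lambda\bigr)
     = -\sum_{i=0}^n \lambda^i \bigsprod{\log_{a^*}(p_i)}{\eta}.
\]
In the Euclidean model this quantity is strictly negative whenever $a^* \in \bdry B$, because $\sum_i \lambda^i \log_{a^*}(p_i)$ is the displacement from $a^*$ to a point of the convex hull of $\{p_i\}$, which lies in $B$. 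The same sign transfers to the Riemannian case via a Toponogov-type angle comparison inside the convex neighborhood, whose hypotheses are satisfied because all relevant geodesic segments have length below the convexity radius. This contradicts minimality of $a^*$ on $\bar B$, so $a^* \in B$, and strict convexity upgrades the conclusion to global uniqueness.

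The step I expect to be the main obstacle is precisely this boundary exclusion: turning the naive Euclidean first-variation inequality into a rigorous Riemannian one requires a sharp comparison theorem, and the fact that the hypothesis is $r < \rho/2$ (and not some stricter curvature-dependent condition of the kind that appears in Karcher's original paper) is exactly the improvement attributed to Kendall. I would therefore lean on \cite{Kendall90} for the sharpest version of the angle comparison rather than rederiving it here.
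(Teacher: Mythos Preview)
The paper does not give its own proof of this proposition: it simply attributes the statement to Kendall \cite{Kendall90}, as a sharpening of Karcher \cite{Karcher77}, with a further pointer to \cite{Afsari11}. So there is no paper-side argument to compare against; both you and the paper ultimately rely on the cited literature.

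Your sketch follows the Karcher template (strict geodesic convexity of $E$ plus a boundary-exclusion first-variation argument), and you correctly flag that the passage from Karcher's curvature-dependent radius to the sharper convexity-radius hypothesis is where Kendall's contribution enters. One caution about how you divide the labor: your opening step asserts that each $\dist^2(\argdot,p_i)$ is strictly geodesically convex on $B$ under only the convexity-radius hypothesis. That Hessian positivity is exactly what Karcher extracts from an \emph{upper sectional-curvature bound} via Rauch comparison; it is not obvious that the bare convexity-radius assumption supplies it. Kendall's proof in \cite{Kendall90} is in fact not a strict-convexity-of-$E$ argument at all, but proceeds through the existence of sufficiently many convex functions on a regular geodesic ball (connected to his work on harmonic maps and manifold-valued martingales). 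So while your outline is morally sound and your instinct to defer to \cite{Kendall90} at the hard step is correct, the split is slightly off: the strict-convexity route already requires curvature bounds, and the genuinely curvature-free statement needs Kendall's different mechanism rather than merely a sharper comparison lemma plugged into the Karcher framework. Since the paper itself only cites the result, your proposal is adequate for present purposes, but be aware that the ``convexity lemma'' you record is not as cheap as written.
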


Since we will always want to consider unique centers of mass, we make the following assumption.
\begin{assumption}
	From the rest of this paper, we only consider $p_0,\dots,p_n$ that
	lie in a 
	ball $B$ whose radius is less than half the convexity radius.
	Note that, as in Proposition \ref{prop:inj radius},
	if we choose ball to have small enough radius, where ``small 
	enough'' depends on the curvature bound $C_0$ and the injectivity radius
	$\iota$, this is true (the original bound in \cite{Karcher77} is of this form). 
	Hence this extra assumption is superfluous if 
	we require our future antecedents of the form $h<\alpha$ to have $\alpha$ depend on 
	$\iota$ as well as $C_0$.	
\end{assumption}

We are now able to define the barycentric mapping.
\begin{definition}
	For a given $\lambda \in \stds$, let $x(\lambda)$ be the
	minimizer of $E(\argdot,\lambda)$ in $B$. We call $x$
	the \textbf{barycentric mapping} with
	respect to vertices $p_1,\dots,p_n$. Its image in $M$
	is called the corresponding \textbf{Karcher simplex}.
\end{definition}
\begin{remark}
	\begin{subenum}
	\item In case $M$ is the Euclidean space, $x$ is just the canonical
	parametrisation $\lambda \mapsto \sum \lambda^i p_i$, because
	$\dist^2(p,q) = \absval{q-p}^2$ and $X_i|_q = q - p_i$.

	\item	
	\label{rem:properSubsimplices}
	For $\lambda^i = 0$, the value $x(\lambda)$ is independent of $p_i$.
	So the facets of the standard simplex are mapped to ``Karcher
	subsimplices'' which only depend on the vertices of the subsimplex.

	\item Because $x$ is continous, the Karcher subsimplices
	form the boundary of a Karcher simplex: $\bdry (x(\stds)) = x(\bdry \stds)$.
	\end{subenum}
\end{remark}

The barycentric mapping behaves well with respect to submanifolds.
\begin{proposition}[geodesic submanifolds]
	Let $e_i$ be the $i$-th Euclidean basis vector of $\R^{n+1}$.
	Then $x(t e_j + (1-t) e_i) = \gamma(t)$, where $\gamma$ is
	the unique shortest geodesic with $\gamma(0) = p_i$ and
	$\gamma(1) = p_j$.
	If all $p_i$ lie in a common totally geodesic submanifold $N \subset M$,
	then so does $x(\lambda)$ for arbitrary $\lambda \in \stds$.
\end{proposition}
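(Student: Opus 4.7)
My strategy for both assertions is to verify that the proposed value satisfies the critical-point condition $F(\argdot, \lambda) = 0$ of Proposition~\ref{prop:criticalptsofE}, and then to invoke the uniqueness of the minimizer of $E(\argdot, \lambda)$ inside the convex ball $B$.

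For the first claim, fix $\lambda = (1-t) e_i + t e_j$, so that $\lambda^i = 1-t$, $\lambda^j = t$, and $\lambda^k = 0$ otherwise. Let $\gamma \colon [0,1] \to M$ denote the unique shortest geodesic from $p_i$ to $p_j$; it lies in $B$ by convexity and has constant speed $\absval{\dot\gamma} = \dist(p_i, p_j) =: d$. Using the identity $\grad_q \tfrac{1}{2}\dist^2(\argdot, p) = -\exp_q^{-1}(p)$ together with the fact that the geodesic from $\gamma(t)$ to either $p_i$ or $p_j$ is a reparametrisation of $\gamma$, I read off $X_i|_{\gamma(t)} = t\, \dot\gamma(t)$ and $X_j|_{\gamma(t)} = -(1-t)\, \dot\gamma(t)$. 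Substituting into \eqref{eq:Fdef} yields
\[
	F(\gamma(t), \lambda) = (1-t)\cdot t\, \dot\gamma(t) - t\cdot(1-t)\, \dot\gamma(t) = 0,
\]
and uniqueness of the minimizer of $E(\argdot, \lambda)$ in $B$ forces $x(\lambda) = \gamma(t)$.

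For the submanifold claim, let $N \subset M$ be totally geodesic and contain every $p_i$. Total geodesy together with the convexity of $B$ implies that the shortest $M$-geodesic between any two points of $B \cap N$ lies inside $N$; consequently $\dist_M(a, p_i) = \dist_N(a, p_i)$ for $a \in B \cap N$, and $\exp_a^{-1}(p_i) \in T_a N$, so that each $X_i|_a$ is tangent to $N$. Therefore $E(\argdot, \lambda)$ restricted to $B \cap N$ coincides with the intrinsic energy $E_N(\argdot, \lambda) = \sum \lambda^i \dist_N^2(\argdot, p_i)$ on $N$, and at any $a \in B \cap N$ the vector $F(a, \lambda) \in T_a N$ equals its intrinsic counterpart $F_N(a, \lambda)$. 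Since $B \cap N$ is itself a convex geodesic ball in $N$ (with convexity radius inherited from that of $M$), the intrinsic Karcher construction applied in $N$ produces a unique critical point $x_N(\lambda) \in N$ with $F_N(x_N(\lambda), \lambda) = 0$; hence $F(x_N(\lambda), \lambda) = 0$ in $M$, and the uniqueness of the critical point of $E(\argdot, \lambda)$ in $B$ forces $x(\lambda) = x_N(\lambda) \in N$.

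The step most apt to require care is the intrinsic/extrinsic identification in the second part, where total geodesy is used simultaneously to equate $M$- and $N$-distances, to keep the gradient vectors $X_i$ tangent to $N$, and to inherit the convexity of $B$ inside $N$. All three facts are standard consequences of total geodesy inside a convex ball, after which the argument reduces to the same uniqueness principle used for the first claim.
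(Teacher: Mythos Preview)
Your proof is correct and follows essentially the same route as the paper. For the first claim the paper merely declares it ``clear,'' while you spell out the computation $F(\gamma(t),\lambda)=0$; for the second claim both arguments find the intrinsic critical point in $N$, use total geodesy to identify $\grad(k_i|_N)=(\grad k_i)|_N$ (your version: $X_i|_a\in T_aN$ and $\dist_M=\dist_N$), and then invoke uniqueness of the critical point of the convex function $E(\argdot,\lambda)$ in $B$.
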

\begin{proof}
	The first claim is clear.
	For the second, we will show that the center of mass restricted to 
	$N$ is also the center of mass on $M$ since $N$ is totally geodesic.
	Let $k_i := \frac 1 2 \dist^2(\argdot, p_i)$.
	As $E$ is convex on $B$, there is a unique minimizer $a$
	of $E(\lambda, \argdot)|_N$, so there are coefficients
	$\lambda^0,\dots,\lambda^n$ with
	\[
	  \lambda^0 \grad (k_0|_N) + \dots + \lambda^n \grad (k_n|_N) = 0.
	\]
	As $N$ is totally geodesic in $M$, it follows that $\grad (k_i|_N) = 
	(\grad k_i)|_N = X_i|_N$. Hence $\lambda^0 X_0 + \dots
	+ \lambda^n X_n = 0$ at the point $a \in N$. Thus $a=x(\lambda)$.
\end{proof}
\begin{remark}
	In general, $s$ is the convex hull of the $p_i$
	(the smallest convex set containing all $p_i$)
	if and only if all subsimplices of $s$ are totally
	geodesic. In fact, if a Karcher triangle
	is not totally geodesic, the geodesics connecting
	any points on the geodesic edges will not be
	contained in the triangle.
\end{remark}

\begin{bibnote}
We already remarked that
one can consider $\lambda$ as a point measure on
$M$ that assigns the mass $\lambda^i$ to the
point $p_i$. For a general probability measure $\mu$
on $M$, \cite{Karcher77} speaks of the minimizer
of
\[
	E_\mu(a) := \int_M \dist^2(a,p) \d\mu(p)
\]
as ``Riemannian center of mass'', but the
subsequent literature has mostly called it the
``Karcher mean'' with respect to the measure $\mu$,
probably initiated by Kendall in
\cite{Kendall90} (unfortunately, Karcher himself is
not very happy with the naming, see \cite{Karcher14}).
The concept seems to go back
to Cartan (see the historic overview
in \cite{Afsari11}), but had not been used by others
until the work of \cite{Grove73}.
	
Karcher himself used the center of mass to
retrace the standard mollification procedure
of Gauss kernel convolution in the case
of functions that map into a manifold.
Considering the center of mass as a function
from an interesing finite-dimensional space of measures into $M$,
as we use it, has been done
by \cite{Rustamov10}. For other recent applications, see the discussion in
sec.~\ref{sec:relatedWork}.
\end{bibnote}


\subsection{Derivatives of the barycentric coordinate map}

Since the next proposition uses pullback bundles, we take this opportunity to recall the calculus of pullback
vector bundles. Recall that given a smooth vector bundle 
\[
  \pi:E \to M
\]
and smooth map
\[
  \phi:N \to M,
\]
one can define the pullback bundle 
\[
  \phi^*\pi : \tilde{E} \to N
\]
with
\[
  \tilde{E} = \mathop{\bigsqcup}_{x \in N}E_{\phi(x)},
\]
where $\sqcup$ represents the disjoint union. (We will always be considering injective
maps, so the disjoint union may be replaced with the union.) 
For the tangent bundle of $M$, we use the notation $\phi^*TM$. 
If $V$ is a vector field on $M$, we will denote corresponding
sections $\phi^*TM \to N$ by $\tilde{V}|_p = V|_{\phi(p)}$. We will not
stress the discrepency between the section $\tilde{V}$ evaluated at $p$ and the
section $V$ evaluated at $\phi(p)$, even though the latter requires a (more general) 
vector field on $M$. If we have a smooth map $\phi$, the differential $d\phi$ is a section of the bundle 
\[
  T^*N \otimes \phi^* TM \to N
\]
(the dual-space star in $T^*N$ is not to be confused with the pull-back
star in $\phi^* TM$).
Given Riemannian metrics $g^N$ and $g^M$ on $N$ and $M$ with Riemannian connections $\nabla^N$ and $\nabla^M$, the Hessian $\nabla d \phi$
is a section of the bundle
\[
  T^*N \otimes T^*N \otimes \phi^* TM \to N,
\]
given by differentiating the section $d\phi$ using the induced connection on the bundle $ T^*N \otimes \phi^* TM \to N$.

In particular, considering the map $x:(\stds, g^e) \to (M,g)$, we get the following derivatives:
\begin{subenum}
	\item The section $dx$ of the bundle $T^*\stds \otimes x^*TM \to \stds$. We denote $V=dx(v)$.
	\item The section $\nabla dx$ of the bundle $T^*\stds \otimes T^*\stds \otimes x^*TM \to \stds$ given by 
	  \begin{align}
	    \label{eqn:nabladx}
	    \nabla dx (v,w) &= \nabla_{dx(v)} dx(w) - dx(\nabla_v w)
	        = \nabla_V W - dx(\nabla_v w),
	  \end{align}
	  where the first connection on the right is the Riemannian connection for $g$ and the second connection is the Riemannian connection for $g^e$.
\end{subenum}

The following is a direct consequence of Proposition \ref{prop:criticalptsofE}.
\begin{proposition}
  \label{prop:GequalFcomposex}
  The map 
  \[
    G: \stds \to x^*TM
  \]
  given by 
  \begin{align*}
      G(\lambda) &= F(x(\lambda), \lambda)\\
                 & = \sum{\lambda^i X_i|_{x(\lambda)}}   
  \end{align*}
  is the zero section.
\end{proposition}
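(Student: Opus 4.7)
The proposition is essentially an immediate restatement of the definition of $x(\lambda)$ together with the characterization of critical points of $E(\argdot,\lambda)$. The plan is simply to chain the relevant definitions.

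First I would recall that by the standing assumption, all points $p_0,\dots,p_n$ lie in a ball $B$ of radius less than half the convexity radius, so that $E(\argdot,\lambda)$ is strictly convex on $B$ and admits a unique minimizer. By the definition of the barycentric map, this minimizer is exactly $x(\lambda)\in B$. In particular, $x(\lambda)$ is a local minimum of $E(\argdot,\lambda)$.

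Next I would invoke Proposition \ref{prop:criticalptsofE}, which identifies local minima of $E(\argdot,\lambda)$ with zeros of the section $F(\argdot,\lambda)=\sum_{i=0}^n \lambda^i X_i$, where $X_i=\tfrac12\grad\dist^2(\argdot,p_i)$. Thus $F(x(\lambda),\lambda)=0\in T_{x(\lambda)}M$ for every $\lambda\in\stds$. Composing with $x$ and using that $G(\lambda)$ lives, by construction, in the fiber $(x^*TM)_\lambda = T_{x(\lambda)}M$, we conclude that $G(\lambda)=F(x(\lambda),\lambda)=0$, i.e.\ $G$ is the zero section.

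There is really no obstacle here: once the minimizer exists and is unique (guaranteed by the convexity radius assumption), the identification $G=0$ is automatic from Proposition \ref{prop:criticalptsofE}. The only small point worth flagging explicitly for the reader is that $G$ is indeed a section of the pullback bundle $x^*TM$ — this is simply because each summand $\lambda^i X_i|_{x(\lambda)}$ sits in $T_{x(\lambda)}M=(x^*TM)_\lambda$, so the whole sum does as well.
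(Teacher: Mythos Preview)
Your proof is correct and matches the paper's approach: the paper itself does not give a separate proof but simply states that the proposition is a direct consequence of Proposition~\ref{prop:criticalptsofE}, which is exactly the chain of definitions you spell out.
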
 

It follows that $dG$ is the zero section
  of the bundle $T^*\stds \otimes x^*TM \to \stds$, and hence
	\begin{equation}
	  \label{eqn:dGIsZero}
	  0= dG(v)= \sum{v^i X_i}+\sum{\lambda^i \nabla_{dx(v)}X_i}.
	\end{equation}

We will use this and also its derivative to calculate $dx$ and $\nabla dx$. 
It will be useful to name the two parts to the above formula.

\begin{definition}[cf. Corollary \ref{cor:metrics from points est}]
  \label{def:sigma}
  Define the section $\sigma$ of the bundle $T^*\stds \otimes x^*TM \to \stds$ to be
  \[
    \sigma(v) =  -\sum{v^i X_i}
  \]
  and define the section $A$ of the bundle $(x^*TM)^* \otimes x^*TM \to \stds$ to be 
  \[
    A(V) = \sum{\lambda^i \nabla_{V}X_i},
  \] 
  where each of these sections is evaluated at $\lambda \in \stds$.
\end{definition}

These are defined so that 
\[
  dG(v) = -\sigma(v) + A(dx(v)).
\]

The relationship of $\sigma$ and $A$ with $dx$ and $\nabla d x$ is given by the following.

\begin{proposition}
	\label{prop:derivativesOfF}
  Let $v$ and $w$ be vector fields on $\stds$ and let $V=dx(v)$ and
  $W=dx(w)$ be the corresponding sections of $x^*TM\to \stds$.	
	The differential $dx$ and Hessian $\nabla d x$ satisfy 
	\begin{align*}
	  A(dx(v)) &= \sigma(v),
	\end{align*}
	and
	\begin{align*}
		A(\nabla d x(v,w)) &=
				- \left(
				  \sum{w^i \nabla_{V}X_i} \right.
				  +\sum{v^i \nabla_{W}X_i}
				  \left. + \sum{\lambda^i \nabla^2_{W,V} X_i} 
				\right),
	\end{align*}
	evaluated at a point $\lambda \in \stds$.
\end{proposition}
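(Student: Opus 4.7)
The plan is to derive both identities from the single fact that $G \equiv 0$ (Proposition~\ref{prop:GequalFcomposex}). The first identity is essentially a rewriting of $dG = 0$: expanding $dG(v)$ via equation~\eqref{eqn:dGIsZero} gives
\[
  0 \;=\; dG(v) \;=\; \sum v^i X_i \,+\, \sum \lambda^i \nabla_{V} X_i \;=\; -\sigma(v) + A(V),
\]
which yields $A(dx(v)) = \sigma(v)$ immediately.

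For the second identity, I would differentiate the identity $dG(v) = 0$ covariantly in direction $w$, using the induced connection on $T^*\stds \otimes x^*TM$. Since $dG$ vanishes identically, its covariant derivative $\nabla dG$ also vanishes, so
\[
  \nabla^{x}_{w}\bigl(dG(v)\bigr) \;=\; dG(\nabla^e_w v),
\]
where $\nabla^x$ denotes the pullback connection on $x^*TM$ and $\nabla^e$ is the flat connection on $\stds$. I would apply $\nabla^x_w$ term by term to the expression $\sum v^i X_i + \sum \lambda^i \nabla_V X_i$ using the product rule. Two bookkeeping facts will be needed: (i) since $\lambda^i$ are the barycentric coordinate functions, $w(\lambda^i) = w^i$, and similarly $w(v^i) = (\nabla^e_w v)^i$ because $\nabla^e$ is just the componentwise derivative on the affine simplex; and (ii) the definition of the Hessian, $\nabla^x_w V = \nabla dx(w,v) + dx(\nabla^e_w v)$ when $V = dx(v)$, which I would use to unpack $\nabla^x_w (\nabla_V X_i)$ via $\nabla_W \nabla_V X_i = \nabla^2_{W,V}X_i + \nabla_{\nabla^x_w V}X_i$.

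After collecting terms, the $\sum w(v^i) X_i$ piece on the left matches $\sum(\nabla^e_w v)^i X_i$ on the right and cancels, and the $dx(\nabla^e_w v)$ contribution combines with $\sum \lambda^i \nabla_{dx(\nabla^e_w v)} X_i$ on the right and cancels as well. What survives is exactly
\[
  \sum w^i \nabla_V X_i \,+\, \sum v^i \nabla_W X_i \,+\, \sum \lambda^i \nabla^2_{W,V} X_i \,+\, A\bigl(\nabla dx(w,v)\bigr) \;=\; 0,
\]
which rearranges to the claimed formula.

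The main obstacle is purely bookkeeping: one must be scrupulous about which connection acts on which object (the flat connection on $\stds$, the Riemannian connection on $M$, and the induced pullback connection on $x^*TM$), and one must correctly interpret $\nabla_V X_i$ as the pullback of the $(1,1)$-tensor $\nabla X_i$ applied to the section $V$, so that its covariant derivative produces precisely $\nabla^2_{W,V}X_i$ together with the $\nabla_{\nabla^x_w V}X_i$ term. Once this calculus is set up correctly, the two desired cancellations using $w(v^i) = (\nabla^e_w v)^i$ and the $dx(\nabla^e_w v)$ term are automatic and the identity falls out.
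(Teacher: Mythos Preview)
Your proposal is correct and follows essentially the same route as the paper: both arguments differentiate the identity $G\equiv 0$ twice, expand via the product rule, use $w(v^i)=(\nabla^e_w v)^i$ and $w(\lambda^i)=w^i$, split $\nabla_W\nabla_V X_i=\nabla^2_{W,V}X_i+\nabla_{\nabla_W V}X_i$, and then invoke the definition $\nabla dx(w,v)=\nabla_W V - dx(\nabla^e_w v)$ to obtain the stated formula. The only cosmetic difference is that the paper organizes the computation as $\nabla dG(w,v)=\nabla_w dG(v)-dG(\nabla_w v)=0$ and expands that single expression, whereas you phrase it as $\nabla^x_w(dG(v))=dG(\nabla^e_w v)$ and expand both sides; the cancellations and surviving terms are identical.
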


\begin{proof}
	These will both follow from differentiating the identity from Proposition \ref{prop:GequalFcomposex}; the first claim is already treated by 
	(\ref{eqn:dGIsZero}).
	We take the next derivative using the covariant derivative of 
	  $dG$, which is a section of $T^*\stds \otimes x^*TM \to \stds$, using the connection $\nabla^{e,g}$ determined by
	  the Riemannian connections of $g^e$ and $g$. To simplify the writing, we use $\nabla$ for each connection, 
	  with the understanding that the context will make it clear whether the connection needs to be
	  computed with respect to $\nabla^{e,g}$, $\nabla^e$, or $\nabla^g$ (which is primarily used on the pullback
	  connection as $x^*TM \to \stds$). 
	\begin{align*}
	  \nabla dG(w,v) &= \nabla_w dG(v) - dG(\nabla_w v)\\
	  										&= \sum{w(v^i) X_i}+ \sum{v^i \nabla_{W} X_i}
	  											+\sum{w^i \nabla_{V}X_i}\\
	  										&\qquad  	+\sum{\lambda^i \nabla_{W}\nabla_{V}X_i} 
	  										  - \sum{(\nabla_w v)^i X_i}-\sum{\lambda^i \nabla_{dx(\nabla_w v)}X_i}\\
	  										&=  \sum{v^i \nabla_{W} X_i}
	  											+\sum{w^i \nabla_{V}X_i}
	  											+\sum{\lambda^i \nabla^2_{W,V}X_i}\\
	  										&\qquad	+\sum{\lambda^i \nabla_{\nabla_W V}X_i}-\sum{\lambda^i \nabla_{dx(\nabla_w v)}X_i}\\
	  										&= \sum{v^i \nabla_{W} X_i}
	  											+\sum{w^i \nabla_{V}X_i}\\
	  											&\qquad +\sum{\lambda^i \nabla^2_{W,V}X_i}
	  											+\sum{\lambda^i \nabla_{\nabla dx (w,v)}X_i}
	\end{align*}
	for vector fields $v$ and $w$ on $\stds$ evaluated at $\lambda$. The last equality follows from the definition of 
	$\nabla dx$ (see Equation \ref{eqn:nabladx}).
\end{proof}


\section{Estimates} \label{sec:Approximation}


In Proposition
\ref{prop:almostEqualMetricsFromEdgeLengths} from Section
\ref{sec:flatMetrics} we saw that
if two flat Riemannian
metrics on $\stds$ assign almost the same
lengths to the edges of the standard simplex,
the metrics are almost equal.
In Proposition
\ref{prop:compareElls} we saw that
in normal coordinates, geodesic distances and
Euclidean distances of the coordinates are
almost the same. These will give us the flexibility to 
give our estimates with respect to the appropriate metrics.

In order to 
compare $x^*g$ to $g^e$, we add an
intermediate step by using $\sigma$ from Definition \ref{def:sigma}. 
For a fixed $\lambda$, consider 
the Euclidean simplex $\bar s$ in the tangent space
$T_{x(\lambda)} M$ with vertices
$X_i|_{x(\lambda)} = (\exp_{x(\lambda)})\inv p_i$.
The (classical) barycentric coordinate
map for this simplex is, in fact, $\sigma$.
The simplex $\bar s$ inherits the metric
$g_{x(\lambda)}$ from its ambient space
$T_{x(\lambda)} M$. We show that $dx - \sigma$
is small, so that $x^*g$ and $\sigma^*g$
almost agree. And by
Proposition \ref{prop:compareElls},
$\absval{X_i - X_j}$ is almost $\dist(p_i,p_j)$,
hence Proposition
\ref{prop:almostEqualMetricsFromEdgeLengths}
compares the flat metrics $\sigma^*g$ and $g^e$. The relevant
estimate is Corollary \ref{cor:metrics from points est}.
\begin{figure}
	\caption{Notation overview for mappings: $x$ maps the
	standard simplex $\stds$ into the grey Karcher
	simplex $s$, while $\sigma$ maps $\stds$ onto the
	convex hull $\bar s$ of the $X_i|_p$.}
	\vspace{3ex}
	\begin{overpic}[scale=0.4]{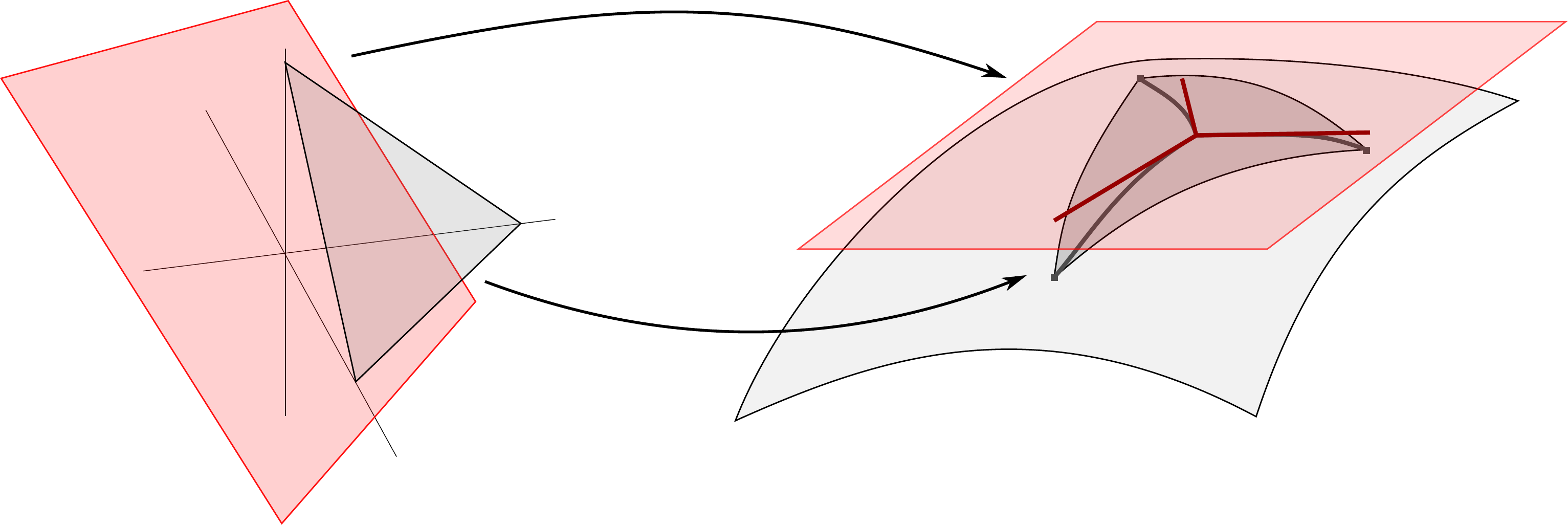}
	\put(8,5){$T\stds$}
	\put(27,24){$\stds$}
	\put(44,14){$x$}
	\put(40,29){$dx,\sigma$}
	\put(83,8){$M$}
	\put(85,18){$T_pM$}
	\put(68,14){$p_i$}
	\put(61,19){$X_i|_p$}
	\end{overpic}
\end{figure}

We now introduce a notation we will use for estimates.
\begin{notation}
	By $a \simleq b$, we mean that there is a constant $\alpha = \alpha(n)$
	with $a \leq \alpha b$. We decided not to include the
	geometric properties in this suppressed constant to make
	their influences clear.
	In the following, we will give estimates of the form 
	\[
	  \absval {A-B} \leq \beta \absval{B}.
	\]
	We note that for sufficiently small $\beta$, this estimate implies the estimates
	\[
	  (1-\beta)\absval B \leq \absval A \leq (1+\beta) \absval B
	\]
	since the triangle inequality gives
	\[
	  \absval B - \absval{A-B} \leq \absval A \leq \absval B + \absval{A-B}.
	\]
\end{notation}


\subsection{Preliminary estimates}

From Proposition \ref{prop:derivativesOfF}, we see that if we can show that $A$ is invertible and we 
can bound its operator norm, we can get estimates on $dx$ and
$\nabla dx$. First, we will need an estimate 
for derivatives of the distance function. Recall the notation from Section \ref{sec:main} before 
Theorem \ref{thm:main1}. The Jacobi field estimate from Corollary
\ref{corol:estimateForNablaX} in the Appendix shows the following.
\begin{proposition}
	\label{prop:estimateForNablaX}
	There exists $\alpha=\alpha(C_0)$ such that if $h<\alpha$ and $p_0, \ldots, p_n$ are 
	in a convex ball $B$ of diameter less than $h$, then	
	for all $i=0,\ldots,n$ and for $V \in T_q M$, where $q \in B$, we have
	\[
		\absval{\nabla_V X_i - V} \simleq C_0 h^2 \absval{V} \label{eqn:first deriv est}
	\]
	and  
	\[
	  \absval{\nabla^2_{V,V} X_i} \simleq C_{0,1} h \, \absval{V}^2. \label{eqn:second deriv est}
	\]
\end{proposition}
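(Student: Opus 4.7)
The plan is to reinterpret $X_i$ via the exponential map and reduce both inequalities to Jacobi field estimates along the minimizing geodesic from the base point to $p_i$. Since $p_0,\ldots,p_n$ lie in a convex ball $B$, one has the clean identity $X_i|_q = \tfrac{1}{2}\grad\dist^2(q,p_i) = -\exp_q^{-1}(p_i)$, so $X_i|_q$ records (the negative of) the initial velocity of the unique minimizing geodesic from $q$ to $p_i$ parametrized on $[0,1]$. In flat space this gives $X_i|_q = q-p_i$ and $\nabla_V X_i = V$ identically, so the claimed inequalities quantify the curvature-induced deviation from this picture.

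For the first estimate, I would fix $V\in T_q M$, pick a curve $q(s)$ with $q(0)=q$, $\dot q(0)=V$, and consider the two-parameter family
\[
   c_s(u) = \exp_{q(s)}\!\bigl(u\,\exp_{q(s)}^{-1}(p_i)\bigr),
\]
so that $c_s(0)=q(s)$ and $c_s(1)=p_i$. The variation field $J(u) = \partial_s c_s(u)\big|_{s=0}$ is a Jacobi field along $c_0$ with boundary data $J(0)=V$ and $J(1)=0$. Symmetry of mixed covariant partials yields $\nabla_V X_i = -J'(0)$. In Euclidean space $J(u)=(1-u)V$, hence $J'(0) = -V$; the deviation $\nabla_V X_i - V = -(J'(0)+V)$ is controlled by the Jacobi equation $J'' + R(J,\dot c_0)\dot c_0 = 0$ with $|\dot c_0|\leq h$. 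Standard Jacobi field comparison (Corollary~\ref{corol:estimateForNablaX}) gives $|J'(0)+V|\simleq C_0 h^2|V|$, which is exactly the first claim.

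For the second estimate, I would apply a further covariant derivative in direction $V$. The resulting object corresponds to a second-order variation of geodesics: viewing $q(s,t)$ as a two-parameter family and letting $c_{s,t}$ be the geodesic from $q(s,t)$ to $p_i$, one obtains a ``second-order Jacobi field'' that satisfies an inhomogeneous Jacobi-type equation whose forcing term involves both $R$ and $\nabla R$. Solving perturbatively to leading order in $h$ yields the bound $|\nabla^2_{V,V}X_i|\simleq (C_0 + hC_1)h\,|V|^2 = C_{0,1} h\,|V|^2$, which is exactly the second claim. The smallness hypothesis $h<\alpha(C_0)$ is used to stay inside the noncaustic regime, so that the Jacobi fields and their $s$-derivatives depend smoothly on the base point.

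The main obstacle is the bookkeeping in the second step, where the $\nabla R$ contribution to the inhomogeneous Jacobi equation must be tracked carefully against the various length scales; this technical content is precisely what is packaged into Corollary~\ref{corol:estimateForNablaX} in the Appendix, so after the identification $X_i|_q = -\exp_q^{-1}(p_i)$ the proposition follows by a direct invocation of that corollary.
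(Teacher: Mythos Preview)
Your proposal is correct and follows essentially the same approach as the paper: both reduce the proposition to a direct invocation of Corollary~\ref{corol:estimateForNablaX} in the Appendix. Your intermediate Jacobi-field discussion (with geodesics running from $q(s)$ to $p_i$ rather than from $p_i$ to $\delta(s)$) is simply the reversed-orientation version of the paper's Lemma~\ref{prop:firstDerivOfX}, so there is no substantive difference.
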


We may now estimate $A$ as an operator. 
In the next proposition, we use $\norm{T}$ to denote the operator norm of the map $T$ considered with the metric $x^*g$.
\begin{proposition} 
\label{prop:AandAinverse}
  There exists $\alpha = \alpha(n,C_0)$ such that if $h<\alpha$, the section $A$ of the bundle $(x^*TM)^*\otimes x^*TM \to \stds$ is pointwise invertible, i.e., there is another section $A\inv$ of the same bundle such that such that for each section
  $v$ of the bundle $x^*TM \to \stds$, $A^{-1}(A(v)) = A(A^{-1}(v)) = v$. Furthermore, 
  there exists $\beta = \beta(n)$ such that $A$ satisfies
  the following estimates:
  \begin{align*}
    \norm{A-\id} &\simleq C_0 h^2
  \end{align*}
  and
  \begin{align*}
    \absval[g]{V} 
		    &\simleq \absval[g]{A(V)}, \\
    \absval[g]{A(V) - V} 
		    &\simleq C_0 h^2 \absval[g]{A(V)}
  \end{align*}
  for any section $V$ of $x^*TM$.
\end{proposition}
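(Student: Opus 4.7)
The plan is to reduce everything to Proposition \ref{prop:estimateForNablaX}, since the statement is essentially a bundle-level repackaging of that pointwise Jacobi field estimate combined with the barycentric identity $\sum_i \lambda^i = 1$.

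First, I would rewrite
\[
  A(V) - V = \sum_{i=0}^n \lambda^i (\nabla_V X_i - V),
\]
using $\sum_i \lambda^i = 1$ to subtract off $V$ inside the sum. Applying the bound $\absval[g]{\nabla_V X_i - V} \simleq C_0 h^2 \absval[g]{V}$ termwise, together with $\lambda^i \geq 0$, gives at every point of $\stds$
\[
  \absval[g]{A(V) - V} \leq \sum_i \lambda^i \absval[g]{\nabla_V X_i - V} \simleq C_0 h^2 \absval[g]{V}.
\]
This is exactly the operator-norm bound $\norm{A - \id} \simleq C_0 h^2$, and the claimed constant $\beta(n)$ is inherited from the $n$-dependent constant suppressed in $\simleq$.

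Next, I would choose $\alpha = \alpha(n, C_0)$ small enough that $\norm{A - \id} < \tfrac{1}{2}$ whenever $h < \alpha$. At each $\lambda \in \stds$ the linear map $A_\lambda$ on $T_{x(\lambda)} M$ is then a small perturbation of the identity, so it is invertible by a Neumann-series argument, and the inverse depends smoothly on $\lambda$, producing the desired section $A\inv$ of the bundle $(x^*TM)^* \otimes x^*TM \to \stds$.

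Finally, the remaining two estimates follow from the reverse triangle inequality: from $\absval[g]{A(V) - V} \leq \tfrac{1}{2} \absval[g]{V}$ one obtains $\absval[g]{V} \leq 2 \absval[g]{A(V)}$, i.e.\ $\absval[g]{V} \simleq \absval[g]{A(V)}$; substituting this back into the first bound yields $\absval[g]{A(V) - V} \simleq C_0 h^2 \absval[g]{A(V)}$. There is no real obstacle here; the geometric content has been absorbed into Proposition \ref{prop:estimateForNablaX}, and the present proposition is its natural packaging as a statement about the section $A$.
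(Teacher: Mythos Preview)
Your proof is correct and follows essentially the same route as the paper: both use Proposition \ref{prop:estimateForNablaX} together with the convex combination $\sum_i \lambda^i = 1$, $\lambda^i \geq 0$ to get $\norm{A-\id}\simleq C_0 h^2$, then choose $\alpha$ small enough to force $\norm{A-\id}<\tfrac12$ for invertibility, and finally derive the remaining two estimates by the (reverse) triangle inequality. The only cosmetic difference is that the paper phrases the bound on $\absval[g]{V}$ as $(1-\beta C_0 h^2)\absval[g]{V}\leq \absval[g]{A(V)}$ before specializing, whereas you go directly to $\absval[g]{V}\leq 2\absval[g]{A(V)}$.
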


\begin{proof}
	By Proposition \ref{prop:estimateForNablaX}, one has 
	\[
	  \absval{\nabla_V X_i - V} \simleq C_0 h^2 \absval V
	\]
	for all tangent vectors $V$, or, in terms
	of operator norms, 
	\[
	  \norm{\nabla X_i - \id} \simleq C_0 h^2.
	\]
	As all $\lambda^i$ are positive and sum up to one, this estimate
	carries over to 
	\begin{equation}
  \label{eq:normest}
	  \norm{A - \id}\simleq C_0 h^2. 
	\end{equation} 
	It follows that there exists $\alpha = \alpha(C_0)$ such that
	if $h<\alpha$ then $A$ is invertible.
	
	For the second estimate, we see that 
	\begin{align*}
	  \absval[g]{V}&=\absval[g]{(\id - A)V+A(V)}
	     \leq \norm{\id-A}\absval[g]{V} + \absval[g]{A(V)}.
	\end{align*}
	Using Equation \ref{eq:normest}, we get 
	\[
	  (1-\beta C_0 h^2) \absval[g]{V} \leq \absval[g]{A(V)}
	\]
	for some function $\beta =\beta(n)$ and so if we make $\alpha$ small enough so $\beta C_0 \alpha^2 <\frac 1 2$ 
	then, we have
	\[
	  \absval[g]{V} \simleq  \absval[g]{A(V)}.
	\]
	
	Finally, we get
	\[
	  \absval[g]{A(V) - V} \simleq \norm{A-\id} \absval[g]{V}\simleq \norm{A-\id} \absval[g]{A(V)}.
	\]
\end{proof}

For the rest of this section, let 
$p_0,\dots,p_n$ be distinct points inside a convex
	ball of radius $h$. Note that this is not a big assumption since we can
	just adjust out constants to depend on the injectivity radius as well,
	as in Proposition \ref{prop:inj radius}.

\subsection{First derivative estimates}

We can now estimate $dx$. 
\begin{theorem}
\label{thm:EstimateFordx}
	There exists $\alpha = \alpha(n,C_0)$ and $\alpha' = \alpha'(n,C_0,\theta)$ such that
 if $h < \alpha$ 
	then
	\[
		\bigabsval{dx(v) - \sigma(v)} \simleq
				C_0 h^2 \absval{\sigma(v)}
	\]
	and if furthermore $g^e$ is the metric of a $(\theta,h)$-full simplex
	 	with $h<\alpha'$, 
	then
	\[
	  \bigabsval{dx(v) - \sigma(v)} \simleq
				C_0 h^2 \absval[g^e]{v},
	\]
  for	any tangent vector $v \in T_\lambda \stds$ at any $\lambda \in \stds$,
\end{theorem}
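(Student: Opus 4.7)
The plan is to combine Proposition \ref{prop:derivativesOfF} (which says $A(dx(v)) = \sigma(v)$) with the invertibility and operator-norm bound on $A$ from Proposition \ref{prop:AandAinverse}, and then compare $\absval[g]{\sigma(v)}$ to $\absval[g^e]{v}$ using Corollary \ref{cor:metrics from points est}.

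First I would fix $\alpha = \alpha(n,C_0)$ small enough so that Proposition \ref{prop:AandAinverse} applies, giving that $A$ is pointwise invertible with $\norm{A - \id} \simleq C_0 h^2$. Since $A(dx(v)) = \sigma(v)$ by Proposition \ref{prop:derivativesOfF}, setting $V := dx(v)$ yields $A(V) = \sigma(v)$, and the third estimate of Proposition \ref{prop:AandAinverse} reads
\[
  \bigabsval{dx(v) - \sigma(v)} = \bigabsval{V - A(V)} \simleq C_0 h^2 \absval{A(V)} = C_0 h^2 \absval{\sigma(v)},
\]
which is exactly the first claim.

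For the second claim, I would invoke Corollary \ref{cor:metrics from points est}, which (for $h$ below some threshold depending on $n,C_0,\theta$) gives
\[
  \bigabsval{\absval[g^e]{v}^2 - \absval[g]{\sigma(v)}^2} \simleq C_0 \theta^{-2} h^2 \absval[g^e]{v}^2.
\]
Choosing $\alpha' = \alpha'(n,C_0,\theta) \leq \alpha$ small enough that the right-hand-side factor is bounded by, say, $1/2$, yields $\absval[g]{\sigma(v)} \simleq \absval[g^e]{v}$. Substituting into the first estimate gives $\bigabsval{dx(v) - \sigma(v)} \simleq C_0 h^2 \absval[g^e]{v}$, as desired.

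There is no serious obstacle here; everything is a bookkeeping step combining the already-established tools. The only thing to watch is that the threshold for $h$ in the \emph{second} inequality genuinely has to depend on $\theta$ (because the passage from $\absval[g]{\sigma(v)}$ to $\absval[g^e]{v}$ via Corollary \ref{cor:metrics from points est} does), which is why the theorem states two separate smallness assumptions $h < \alpha$ and $h < \alpha'$.
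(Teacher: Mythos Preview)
Your proof is correct and follows essentially the same route as the paper: invoke $A(dx(v))=\sigma(v)$ from Proposition~\ref{prop:derivativesOfF}, apply the estimate $\absval{A(V)-V}\simleq C_0 h^2 \absval{A(V)}$ from Proposition~\ref{prop:AandAinverse} to get the first inequality, and then use Corollary~\ref{cor:metrics from points est} with a suitably small $\alpha'=\alpha'(n,C_0,\theta)$ to pass from $\absval[g]{\sigma(v)}$ to $\absval[g^e]{v}$.
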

\begin{proof}
	By Proposition \ref{prop:derivativesOfF}, $A(dx(v)) = \sigma(v)$
	and hence the first statement now follows from Proposition \ref{prop:AandAinverse}. Applying Corollary 
	\ref{cor:metrics from points est}
	we also get that there is a $\beta=\beta(n)$ such that
	\[
	  \bigabsval{dx(v) - \sigma(v)} \simleq
				C_0 h^2 (1+\beta C_0 \theta^{-2}h^2)\absval[g^e]{v},
	\]
	so we can take $\alpha'<\alpha$ small enough so that $\beta C_0 \theta^{-2}(\alpha')^2<1$ to get the second statement.
\end{proof}

We can now turn our estimate on $dx$ into an estimate
of the relevant metric tensors.
\begin{theorem}
	\label{thm:comparisongge}
	There exists $\alpha = \alpha(n,C_0)$
	such that if $g^e$ is the metric of a $(\theta,h)$-full simplex
		 	with $h<\alpha$
	then for tangent vectors
	$v,w \in T_\lambda \stds$ at any $\lambda \in \stds$
	\begin{equation}
		\label{eqn:comparisongge}
		\absval{(x^* g-g^e)(v,w)} \simleq C_0  (1+\theta^{-2}) h^2
								\absval[g^e] v \, \absval[g^e] w
	\end{equation}
	and there is an $\alpha' = \alpha'(n,\theta,C_0)$ such that if 
	 furthermore $h<\alpha'$ then
	\begin{equation}
		\label{eqn:comparisongge2}
		\absval[x^*g]v^2  \simleq \absval[g^e] v^2.
	\end{equation}	
\end{theorem}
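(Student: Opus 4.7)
By the polarization-type Lemma \ref{lem:fromNormToScalarProduct} it suffices to prove the quadratic form version
\[
\big|(x^*g - g^e)(v,v)\big| \simleq C_0(1+\theta^{-2})\, h^2\, \absval[g^e]{v}^2,
\]
and then the symmetric bilinear inequality follows automatically. The plan for the quadratic form is to use $\sigma$ as an intermediate object, splitting
\[
x^*g(v,v) - g^e(v,v) \;=\; \bigl(\absval[g]{dx(v)}^2 - \absval[g]{\sigma(v)}^2\bigr) \;+\; \bigl(\absval[g]{\sigma(v)}^2 - \absval[g^e]{v}^2\bigr).
\]

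The second bracket is controlled directly by Corollary \ref{cor:metrics from points est}, which gives $\bigl|\absval[g]{\sigma(v)}^2 - \absval[g^e]{v}^2\bigr| \simleq C_0 \theta^{-2} h^2 \absval[g^e]{v}^2$ (this is where the $\theta^{-2}$ enters). For the first bracket, factor as
\[
\absval[g]{dx(v)}^2 - \absval[g]{\sigma(v)}^2 = \bigl(\absval[g]{dx(v)} - \absval[g]{\sigma(v)}\bigr)\bigl(\absval[g]{dx(v)} + \absval[g]{\sigma(v)}\bigr).
\]
By the reverse triangle inequality and Theorem \ref{thm:EstimateFordx}, the first factor is bounded by $\absval[g]{dx(v) - \sigma(v)} \simleq C_0 h^2 \absval[g^e]{v}$. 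For the second factor, a shrinking of $h$ forces $\absval[g]{\sigma(v)} \simleq \absval[g^e]{v}$ (again via Corollary \ref{cor:metrics from points est}), and writing $\absval[g]{dx(v)} \leq \absval[g]{\sigma(v)} + \absval[g]{dx(v) - \sigma(v)}$ shows $\absval[g]{dx(v)} + \absval[g]{\sigma(v)} \simleq \absval[g^e]{v}$. Multiplying these estimates yields the first bracket is $\simleq C_0 h^2 \absval[g^e]{v}^2$, with \emph{no} $\theta$ dependence. Summing the two brackets produces the announced bound with constant $C_0(1+\theta^{-2})$.

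For \eqref{eqn:comparisongge2}, apply \eqref{eqn:comparisongge} with $v = w$ and use the triangle inequality
\[
\absval[x^*g]{v}^2 \leq \absval[g^e]{v}^2 + \bigl|(x^*g - g^e)(v,v)\bigr| \leq \bigl(1 + \kappa C_0(1+\theta^{-2})h^2\bigr) \absval[g^e]{v}^2,
\]
for the constant $\kappa = \kappa(n)$ coming from $\simleq$, and symmetrically for the lower bound. Choosing $\alpha' = \alpha'(n,\theta,C_0)$ so small that $\kappa C_0(1+\theta^{-2})(\alpha')^2 \leq \tfrac{1}{2}$ yields $\tfrac{1}{2}\absval[g^e]{v}^2 \leq \absval[x^*g]{v}^2 \leq \tfrac{3}{2}\absval[g^e]{v}^2$, which in particular shows $x^*g$ is positive definite and gives \eqref{eqn:comparisongge2}.

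\textbf{Main obstacle.} There is no serious analytic obstacle; every difficult input has already been supplied (Theorem \ref{thm:EstimateFordx} for the $dx$-vs-$\sigma$ comparison, and Corollary \ref{cor:metrics from points est} for the $\sigma^*g$-vs-$g^e$ comparison). The only thing to be careful about is bookkeeping: one must verify that the factor $\absval[g]{dx(v)} + \absval[g]{\sigma(v)}$ is controlled by $\absval[g^e]{v}$ \emph{without} incurring an extra $\theta^{-1}$, so that the $\theta^{-2}$ contribution enters only through the $\sigma^*g$-vs-$g^e$ comparison. This is why the antecedent of the theorem requires $h$ small enough that $\sigma$ is an honest near-isometry from $(\stds,g^e)$ to $(\bar s, g)$ before the product estimate is applied.
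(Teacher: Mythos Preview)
Your proof is essentially the same as the paper's: reduce to the diagonal via Lemma~\ref{lem:fromNormToScalarProduct}, split $x^*g(v,v)-g^e(v,v)$ into the $dx$-vs-$\sigma$ bracket and the $\sigma$-vs-$g^e$ bracket, and invoke Theorem~\ref{thm:EstimateFordx} and Corollary~\ref{cor:metrics from points est} respectively.

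There is one small bookkeeping slip. In handling the first bracket you bound both factors $\absval[g]{dx(v)-\sigma(v)}$ and $\absval[g]{dx(v)}+\absval[g]{\sigma(v)}$ directly in terms of $\absval[g^e]{v}$. For the former you invoke the second conclusion of Theorem~\ref{thm:EstimateFordx}, and for the latter you need $\absval[g]{\sigma(v)}\simleq\absval[g^e]{v}$ from Corollary~\ref{cor:metrics from points est}; both of these require $h<\alpha'(n,C_0,\theta)$. That would force the threshold in \eqref{eqn:comparisongge} to depend on $\theta$, contrary to the stated $\alpha=\alpha(n,C_0)$. The paper avoids this by using the \emph{first} conclusion of Theorem~\ref{thm:EstimateFordx} to get $\bigl|\absval[x^*g]{v}-\absval[g]{\sigma(v)}\bigr|\simleq C_0 h^2\absval[g]{\sigma(v)}$ (needing only $h<\alpha(n,C_0)$), squaring to obtain $\bigl|\absval[x^*g]{v}^2-\absval[g]{\sigma(v)}^2\bigr|\simleq C_0 h^2\absval[g]{\sigma(v)}^2$, and only then writing $\absval[g]{\sigma(v)}^2\le\absval[g^e]{v}^2+\bigl|\absval[g]{\sigma(v)}^2-\absval[g^e]{v}^2\bigr|$ so that the $\theta^{-2}$ term from Corollary~\ref{cor:metrics from points est} is absorbed into the final constant rather than into the smallness threshold. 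Your argument is easily repaired the same way.
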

\begin{proof}
	Due to Lemma \ref{lem:fromNormToScalarProduct},
	it suffices to show the claim for $v = w$. Consider a vector $v$ at a
	point $\lambda \in \stds$ with image $a = x(\lambda)$. We will compare
	the value of $\absval[g]{\sigma(v)}^2$.
	By  Theorem \ref{thm:EstimateFordx}, we can choose $\alpha$ small enough so that if $h\leq \alpha$ then
	\begin{align*}
		\big| \absval[x^* g] v - \absval[g]{\sigma(v)} \big|
			& = \big|\absval[g]{dx(v)} - \absval[g]{\sigma(v)} \big| \\
			& \leq \absval[g]{dx(v) - \sigma(v)}\\
			&	\simleq C_0 h^2 \absval[g]{\sigma(v)}.
	\end{align*}
	The same is true for the squared norms,
	\begin{equation} \label{eq:sg sigma est}
	  \big|\absval[x^* g] v^2 -  \absval[g]{\sigma(v)}^2 \big|
	  \simleq C_0 h^2 \absval[g]{\sigma(v)}^2
	\end{equation}
	if $\alpha$ is small enough. Hence we have successfully
	compared $x^* g$ to the Euclidean metric $\sigma^*g$. By (\ref{eq:sg sigma est}), 
	\begin{align*}
	  \big| (x^* g - g^e)(v,v) \big| &\leq
			\big| \absval[x^* g] v^2-\absval[g]{\sigma(v)}^2 \big| + \big| \absval[g]{\sigma(v)}^2- \absval[g^e]{v}^2\big| \\
			&\simleq C_0 h^2 \absval[g]{\sigma(v)}^2+ \big| \absval[g]{\sigma(v)}^2- \absval[g^e]{v}^2\big|\\
			&\simleq C_0 h^2  \absval[g^e]{v}^2 + (1+C_0 h^2)\big| \absval[g]{\sigma(v)}^2- \absval[g^e]{v}^2\big|.
	\end{align*}
	We can now use 	Corollary \ref{cor:metrics from points est} to get 
	\[
		 \big| (x^* g - g^e)(v,v) \big|	\simleq (1 + (1+C_0 h^2)\theta^{-2})C_0 h^2 \absval[g^e]{v}^2.
	\]
	The first result follows if $C_0\alpha^2<1$. The second result follows if we take $\alpha'<\alpha$ small enough so that 
	$C_0  (1+\theta^{-2}) (\alpha')^2 < 1$.
\end{proof}



\subsection{Second derivative estimates}

We are now ready to prove the estimate for $\nabla d x$.
\begin{theorem}
	\label{thm:EstimateForNabladx}
	There exists $\alpha = \alpha(n,\theta,C_0,C_1)$ such that
	if $g^e$ is the metric of a $(\theta,h)$-full simplex
	with $h < \alpha$, then
	\[
	  \absval[g]{\nabla dx(v,w)} \simleq C_{0,1} (1+\theta^{-1})h \absval[g^e] v \,
	  \absval[g^e] w,
	\] 
	or, using the operator norm,
	\[
	  \norm{\nabla dx} \simleq C_{0,1} (1+\theta^{-1})h
	\]
	 when $x$ is considered as a mapping $(\stds,g^e) \to (M,g)$.
\end{theorem}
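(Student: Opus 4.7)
The plan is to apply Proposition \ref{prop:derivativesOfF} to express $A(\nabla dx(v,w))$ as a sum of three terms, bound each in the $g$-norm separately, and then invert $A$ using Proposition \ref{prop:AandAinverse}. Explicitly, writing $V=dx(v)$ and $W=dx(w)$,
\[
  A(\nabla dx(v,w)) = -\Bigl(\sum_{i=0}^n w^i \nabla_V X_i + \sum_{i=0}^n v^i \nabla_W X_i + \sum_{i=0}^n \lambda^i \nabla^2_{W,V} X_i\Bigr),
\]
and for sufficiently small $h$, Theorem \ref{thm:EstimateFordx} together with $(\theta,h)$-fullness provides $\absval[g]{V} \simleq \absval[g^e]{v}$ and $\absval[g]{W} \simleq \absval[g^e]{w}$. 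The third sum is then immediately controlled: since $\sum_i \lambda^i = 1$ and by the second inequality in Proposition \ref{prop:estimateForNablaX},
\[
  \Bigl|\sum_{i=0}^n \lambda^i \nabla^2_{W,V} X_i\Bigr|_g \simleq C_{0,1}\,h\,\absval[g]{V}\,\absval[g]{W} \simleq C_{0,1}\,h\,\absval[g^e]{v}\,\absval[g^e]{w}.
\]

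The two mixed sums $\sum w^i \nabla_V X_i$ and $\sum v^i \nabla_W X_i$ are where the main subtlety lies. A naive triangle-inequality bound would give $\bigl|\sum w^i \nabla_V X_i\bigr|_g \lesssim \bigl(\sum|w^i|\bigr)\absval[g]{V}$, which carries no $h$-smallness and so cannot yield the required $O(h)$ bound. The crucial observation is that $w \in T_\lambda \stds$ forces $\sum_i w^i = 0$, so one may subtract the constant vector $V$ term by term:
\[
  \sum_{i=0}^n w^i \nabla_V X_i = \sum_{i=0}^n w^i\bigl(\nabla_V X_i - V\bigr).
\]
Each summand is now small by the first inequality of Proposition \ref{prop:estimateForNablaX}, namely $\absval[g]{\nabla_V X_i - V} \simleq C_0 h^2 \absval[g]{V}$. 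Applying Proposition \ref{prop:innerprod est} with $Y_i = \nabla_V X_i - V$ then gives
\[
  \Bigl|\sum_{i=0}^n w^i(\nabla_V X_i - V)\Bigr|_g \simleq \frac{1}{\theta h}\,\absval[g^e]{w}\cdot C_0 h^2 \absval[g]{V} \simleq \theta^{-1} C_0\, h\,\absval[g^e]{v}\,\absval[g^e]{w},
\]
where the $(\theta h)^{-1}$ factor from Proposition \ref{prop:innerprod est} absorbs one power of $h$ from the Jacobi field estimate, producing the $\theta^{-1}$ in the final bound. The second mixed sum is handled identically by symmetry in $v$ and $w$.

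Combining the three estimates yields $\absval[g]{A(\nabla dx(v,w))} \simleq C_{0,1}(1+\theta^{-1})\,h\,\absval[g^e]{v}\,\absval[g^e]{w}$, using $C_0 \leq C_{0,1}$. Since Proposition \ref{prop:AandAinverse} provides $\absval[g]{\nabla dx(v,w)} \simleq \absval[g]{A(\nabla dx(v,w))}$, the theorem follows, provided $\alpha$ is chosen small enough (depending on $n$, $\theta$, $C_0$, $C_1$) that Theorem \ref{thm:EstimateFordx} and Proposition \ref{prop:AandAinverse} are simultaneously applicable. The main obstacle is really just the bookkeeping around the mixed sums: without exploiting the tangent-space constraint $\sum w^i = 0$, the estimate simply does not close.
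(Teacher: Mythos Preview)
Your argument is essentially the paper's own proof: express $A(\nabla dx)$ via Proposition~\ref{prop:derivativesOfF}, exploit $\sum w^i=0$ to subtract $V$ before applying Propositions~\ref{prop:innerprod est} and~\ref{prop:estimateForNablaX}, bound the second-derivative term directly, and invert $A$ with Proposition~\ref{prop:AandAinverse}. One small gap: you invoke Proposition~\ref{prop:estimateForNablaX} for $\nabla^2_{W,V}X_i$, but that proposition is stated only for the diagonal $\nabla^2_{V,V}X_i$; the paper handles this by first reducing to $v=w$ (legitimate since $\nabla dx$ is symmetric, both connections being torsion-free), and you should either do the same or note that the off-diagonal case follows by polarizing the symmetric part and bounding the antisymmetric part $\tfrac12 R(W,V)X_i$ by $C_0 h\,|V|\,|W|$.
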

\begin{proof}
	Throughout the proof we will assume $h<\alpha$, and show that we can make $\alpha$
	sufficiently small to prove the estimate.
	As before,
	it suffices to show the theorem for $v = w$. Let $V=dx(v)$.
	Since the $v^i$ sum up to zero,
	\begin{align*}
		\absval[g]{v^i \nabla_V X_i}=
		\absval[g]{v^i \nabla_V X_i - \sum v^i V}
			&\simleq (\theta h)^{-1}\absval[g^e]{v}\sum{ \absval[g]{\nabla_V X_i - V}} \\
			&\simleq C_0 \theta^{-1} h \absval[g^e]{v} \absval[g] V
	\end{align*}
	using Propositions \ref{prop:innerprod est} and \ref{prop:estimateForNablaX}.
	
	Now we use Proposition \ref{prop:derivativesOfF},
	\begin{align*}
	  \absval[g]{A(\nabla d x(v,v))} 
	    &\leq 2 \absval[g]{v^i \nabla_V X_i}
					+ \absval[g]{\lambda^i \nabla^2_{dx(v),dx(v)} X_i}\\
			& \simleq C_0 \theta^{-1} h \absval[g^e]{v} \absval[g] V
				+ C_{0,1} h \absval[g]{V}^2,
	\end{align*}	
	where $\nabla^2 X_i$ has been estimated by
	Proposition \ref{prop:estimateForNablaX}. Since $\absval[g]V=\absval[x^*g]{v}$, it follows from Theorem \ref{thm:comparisongge} that for $\alpha$ sufficiently small,
	\[
	  \absval[g]{A(\nabla d x(v,v))} 
	  \simleq C_{0,1} \left(1+\theta^{-1}\right) h \absval[g^e] {v}^2.
	\]
	By Proposition \ref{prop:AandAinverse},
	\begin{align*}
	  \absval[g]{\nabla d x(v,v)} &\simleq  \absval[g]{A(\nabla d x(v,v))}\\
				 &\simleq  C_{0,1} (1+\theta^{-1}) h\absval[g^e] {v}^2.
	\end{align*}
\end{proof}
The corresponding estimate for the metric is the following.
\begin{theorem}
	\label{thm:EstimateForChristoffelSymbols}
	Let the pull-back of the connection on $M$
	be defined as $dx\nabla^{x^*g}_v w = \nabla_{dx(v)} dx(w)$.
	There is a constant $\alpha = \alpha(n,\theta,C_0,C_1)$
	such that if $g^e$ is the metric of a $(\theta,h)$-full simplex
	with $h < \alpha$, then
	\[
	 \absval{\nabla^e x^*g (u,v,w)} \simleq 
	  C_{0,1} (1+\theta^{-1})h\absval[g^e] u \, \absval[g^e] v \, \absval[g^e] w
	\]
	for tangent vectors
	$u,v,w \in T_\lambda \stds$ at any $\lambda \in \stds$.
\end{theorem}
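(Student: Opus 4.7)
The plan is to reduce the estimate on $\nabla^e x^*g$ to the estimate on $\nabla dx$ already provided by Theorem~\ref{thm:EstimateForNabladx}, combined with the metric equivalence $x^*g \asymp g^e$ from Theorem~\ref{thm:comparisongge}. The key identity to establish first is that for any vector fields $u,v,w$ on $\stds$,
\[
  (\nabla^e_u x^*g)(v,w) = g\!\left(\nabla dx(u,v),\, dx(w)\right) + g\!\left(dx(v),\, \nabla dx(u,w)\right).
\]

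To derive this identity, I start from the definition $(\nabla^e_u x^*g)(v,w) = u\bigl(x^*g(v,w)\bigr) - x^*g(\nabla^e_u v, w) - x^*g(v, \nabla^e_u w)$, expand $x^*g(v,w) = g(dx(v), dx(w))$, and apply metric compatibility of the Levi-Civita connection on $(M,g)$:
\[
  u\bigl(g(dx(v), dx(w))\bigr) = g\!\left(\nabla_{dx(u)} dx(v),\, dx(w)\right) + g\!\left(dx(v),\, \nabla_{dx(u)} dx(w)\right).
\]
Using the defining relation $\nabla_{dx(u)} dx(v) = \nabla dx(u,v) + dx(\nabla^e_u v)$ from eqn.~\eqref{eqn:nabladx} (and similarly with $w$ in place of $v$), the terms involving $dx(\nabla^e_u v)$ and $dx(\nabla^e_u w)$ cancel against the subtracted $x^*g$-terms, leaving exactly the claimed identity. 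This is essentially the standard ``second fundamental form'' computation expressing the failure of $x^*g$ to be parallel in terms of the Hessian of the map.

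Once the identity is in place, the estimate is immediate from Cauchy--Schwarz. For the first term, Theorem~\ref{thm:EstimateForNabladx} gives $\absval[g]{\nabla dx(u,v)} \simleq C_{0,1}(1+\theta^{-1})h \,\absval[g^e]u\,\absval[g^e]v$, and Theorem~\ref{thm:comparisongge}, eqn.~\eqref{eqn:comparisongge2}, yields $\absval[g]{dx(w)} = \absval[x^*g]{w} \simleq \absval[g^e]{w}$ (possibly after shrinking $\alpha$ so that both theorems apply). Multiplying these bounds, and treating the symmetric second term identically, gives the desired inequality. The final constant is absorbed into the $\simleq$ notation as a function of $n$ alone.

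The main ``obstacle'' is really just bookkeeping: ensuring that the identity for $\nabla^e x^*g$ is derived with the correct connection on the pullback bundle (the one denoted $\nabla^{e,g}$ earlier, whose torsion-freeness for the symmetric part and metric compatibility with $g$ are what make the cancellation work) and that $\alpha$ is chosen small enough so that both Theorem~\ref{thm:EstimateForNabladx} and the metric comparison~\eqref{eqn:comparisongge2} hold simultaneously. No new geometric input is required beyond the results already assembled.
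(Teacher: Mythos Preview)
Your proposal is correct and follows essentially the same route as the paper: derive the identity $(\nabla^e_u x^*g)(v,w) = g(\nabla dx(u,v),dx(w)) + g(dx(v),\nabla dx(u,w))$ via metric compatibility and the definition of $\nabla dx$, then bound each term with Theorem~\ref{thm:EstimateForNabladx} and convert $\absval[x^*g]{\cdot}$ to $\absval[g^e]{\cdot}$ via Theorem~\ref{thm:comparisongge}. There is nothing to add.
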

We note that  the estimate is equivalently a bound on the difference of the 
Christoffel symbols corresponding to the metrics $g^e$ and $x^*g$, which is a tensor 
(the difference of connections is a tensor).  Since the metric $g^e$ is constant,
this is an estimate for the Christoffel symbols for the metric $g$ in coordinate $x$.
\begin{proof}
%
	We see that
	\begin{align*}
	  \nabla^e x^*g (u,v,w) &= u [g(dx(v),dx(w))] - g(dx(\nabla^e _u v), dx(w)) - g(dx(v), dx(\nabla^e _u w)) \\
	  			& = g(\nabla_{dx(u)} dx(v), dx(w)) +g(dx(v), \nabla_{dx(u)} dx(w)) \\
	  			 &\qquad  - g(dx(\nabla^e _u v), dx(w)) - g(dx(v), dx(\nabla^e _u w)) \\
	  			 &=  g(\nabla dx (u,v),dx(w))  +g(dx(v), \nabla dx(u,w)),	  			  
	\end{align*}
	so Theorem \ref{thm:EstimateForNabladx} gives the estimate 
	\[
	  \absval{\nabla^e x^*g (u,v,w)} \simleq 
	   C_{0,1} (1+\theta^{-1})h \absval[g^e] u \absval[g^e] v \absval[x^*g]{w}
	   + C_{0,1} (1+\theta^{-1})h \absval[g^e] u \absval[g^e] w \absval[x^*g]{v}.
	\]
	Applying Theorem \ref{thm:comparisongge} gives the result.
\end{proof}


\section{Appendix}

We promised the reader evidence for Theorem \ref{prop:estimateForNablaX},
which is given in this appendix.

Throughout this section, we will assume the following notation.
	Let $(M,g)$ be a complete $m$-dimensional Riemannian manifold with
	curvature bounds $\norm[\infty] R \leq C_0$ and $\norm[\infty]{\nabla R} \leq C_1$,
	where $R$ denotes the curvature tensor.
	Suppose $\gamma: [0;\tau] \to M$ is the unique arclength-parametrized
	geodesic with $\gamma(0) = p$ and
	$\gamma(\tau) = q$, and $V \in T_q M$. Let
	$s \mapsto \delta(s)$ be a geodesic with $\delta(0) = \gamma(\tau)$ and
	$\dds \delta(0) = V$. Define a variation of geodesics by
	\[
		c(s,t) := \exp_p \big(\smallfrac t \tau (\exp_p)\inv \delta(s) \big)
	\]
	so $c(0,t) =\gamma(t)$.  
	Then for each small $s$ and $t\in [0,\tau]$, $T := \partial_t c$ is tangent to a geodesic curve
	and $J(s,t) := \partial_s c$ is a Jacobi field along $t \mapsto c(s,t)$. We note that 
	$\absval{T(0,t)}=1$ since $\gamma$ is parametrized by arclength.
	We denote $t$ covariant derivatives by a dot, so this means $\dot T = 0$
	and $\ddot J + R(J,T)T = 0$.

	Recall that we use $A \simleq B$ to mean there exists a constant $\beta=\beta(n)$ 
	such that $A \leq \beta B$. Since there is no simplex in this section, and hence no $n$, 
	the constant $\beta$ will be universal. In particular, the constant 
	does not depend on $m$, the dimension of $M$.


\subsection{Jacobi Fields with Two Fixed Values}

In this section we review some estimates on Jacobi fields.

The first result we need is a form of the Rauch comparison theorem. Here is a reformulation 
of part of the result \cite[Theorem 5.5.1]{Jost11}.
\begin{theorem}
\label{thm:rauch comparison}
We have the following estimate for $J(0,t)$:
\[
  g(J(0,t),\dot J(0,t)) \geq \absval{J(0,t)}^2\frac{\sqrt{C_0} \cos \sqrt{C_0}t}{\sin \sqrt{C_0}t}.
\]
In particular, $\absval{J(0,t)}$ is increasing in $t$ if $t < \frac{\pi}{2\sqrt{C_0}}$. 
\end{theorem}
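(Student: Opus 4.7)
Here is the plan for proving the stated Rauch-type bound.

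\textbf{Plan.} The strategy is a Sturm–Wronskian comparison between $u(t):=\absval{J(0,t)}$ and the trigonometric function $v(t):=\sin(\sqrt{C_0}\,t)/\sqrt{C_0}$ arising from the model space of constant sectional curvature $C_0$. The inequality $g(J,\dot J)\geq \absval{J}^2\sqrt{C_0}\cot(\sqrt{C_0}t)$ is, after dividing by $u^2$, exactly $u'/u\geq v'/v$, i.e.\ monotonicity of $u/v$, which in turn is equivalent to nonnegativity of the Wronskian $W:=u'v-uv'$. So I will establish $W\geq 0$, and the moreover claim about monotonicity of $\absval{J(0,t)}$ follows because $\cot(\sqrt{C_0}t)>0$ on $(0,\pi/(2\sqrt{C_0}))$ forces $g(J,\dot J)=\tfrac12(u^2)'>0$.

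\textbf{Step 1: The differential inequality $u''+C_0 u\geq 0$.} On any subinterval where $u>0$, $u$ is smooth, and $uu'=g(J,\dot J)$. Differentiating gives
\begin{equation*}
uu''=-\,(u')^2+\absval{\dot J}^2+g(J,\ddot J).
\end{equation*}
Cauchy–Schwarz yields $(u')^2=g(J,\dot J)^2/\absval{J}^2\leq\absval{\dot J}^2$, and the Jacobi equation together with $\absval{T}=1$ and $\inorm[\infty]{R}\leq C_0$ gives $g(J,\ddot J)=-g(R(J,T)T,J)\geq -C_0\absval{J}^2$. Combining, $uu''\geq -C_0 u^2$, hence $u''+C_0 u\geq 0$.

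\textbf{Step 2: Wronskian comparison.} The comparison function satisfies $v''+C_0v=0$, $v(0)=0$, $v'(0)=1$, and $v>0$ on $(0,\pi/\sqrt{C_0})$. Then
\begin{equation*}
W'=u''v-uv''=(u''+C_0 u)v-u(v''+C_0 v)\geq 0,
\end{equation*}
so $W$ is nondecreasing. Since $J(0,0)=0$ (because $c(s,0)=p$ for every $s$), $u(t)\sim\absval{\dot J(0,0)}\,t$ as $t\to 0^+$, and since $v(t)\sim t$, the product $u'v-uv'\to 0$ as $t\to 0^+$. Thus $W(t)\geq 0$ on the interval of interest. Dividing by $uv>0$ gives $u'/u\geq v'/v=\sqrt{C_0}\cot(\sqrt{C_0}t)$; multiplying by $u^2$ yields the claim.

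\textbf{Anticipated obstacle.} The only real subtlety is the regularity of $u=\absval{J}$ at $t=0$ and at any interior zero of $J$. At $t=0$ the analysis is handled by the asymptotic $u(t)\sim\absval{\dot J(0,0)}t$, which forces $W(0^+)=0$. For $t\in(0,\pi/\sqrt{C_0})$ we need $u>0$; this follows because the argument above combined with $u(0)=0$ and $u'(0)=\absval{\dot J(0,0)}$ shows $u\geq \absval{\dot J(0,0)}v$ by integration of $W\geq 0$, so $J$ has no interior zero before the first zero of $v$, i.e.\ no conjugate point occurs before $\pi/\sqrt{C_0}$—the very content of Rauch's theorem, and compatible with the assumption $\tau\leq h<\alpha$ made elsewhere in the paper.
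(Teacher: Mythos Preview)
Your argument is correct: the Sturm--Wronskian comparison between $u=\absval{J}$ and $v=\sin(\sqrt{C_0}\,t)/\sqrt{C_0}$ is exactly the standard proof of this Rauch-type inequality, and your treatment of the initial asymptotics and the monotonicity conclusion is clean. The paper itself does not give a proof but simply cites \cite[Thm.~5.5.1]{Jost11} for the inequality and derives the monotonicity in the same one-line way you do; your write-up is essentially a self-contained version of that cited argument.
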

\begin{proof}
	The first part is essentially the first part of \cite[Theorem 5.5.1]{Jost11}. The second follows because
	\[
	  \ddt \absval{J(0,t)} = \frac{g(J(0,t),\dot{J}(0,t))}{\absval{J(0,t)}}
	\]
    and both $\sin \sqrt{C_0}t$ and $\cos \sqrt{C_0}t$ are positive if $t < \frac{\pi}{2\sqrt{C_0}}$. 
\end{proof}
\begin{lemma}
	\label{thm:estimatesOnDtJ}
	Suppose $\tau < \frac{\pi}{2\sqrt{C_0}}$. Then
	\[
		\absval{\tau \dot J(0,\tau) - V} \simleq C_0 \tau^2 \, \absval V
	\] 
	and
	\[
			\absval{\dot J(0,\tau)} \simleq \frac 1 \tau (1+C_0 \tau^2) \absval V.
	\]
\end{lemma}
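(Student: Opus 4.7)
The plan is to integrate the Jacobi equation $\ddot J + R(J,T)T = 0$ and read off the correction via Taylor's theorem with integral remainder, using the two endpoint values $J(0,0)=0$ and $J(0,\tau)=V$ as boundary data. Along $\gamma$ we have $T(0,t) = \dot\gamma(t)$, so $|T(0,t)|=1$, and hence $|R(J,T)T| \leq C_0 |J|$ on the curve $t \mapsto c(0,t)$.

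Define $f(t) := J(0, \tau - t)$, so $f(0) = V$, $f(\tau) = 0$, $f'(0) = -\dot J(0,\tau)$, and $f''(u) = \ddot J(0, \tau - u) = -R(J,T)T|_{t=\tau-u}$. Taylor's theorem with integral remainder gives
\[
0 = f(\tau) = V - \tau\, \dot J(0,\tau) - \int_0^\tau (\tau - u)\, R(J,T)T\big|_{t=\tau-u}\, du,
\]
so after changing variables $t = \tau - u$,
\[
\tau \dot J(0,\tau) - V = -\int_0^\tau (\tau - t)\, R(J(0,t), T(0,t))\, T(0,t)\, dt.
\]

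To bound the right-hand side I use two ingredients. First, $|R(J,T)T| \leq C_0 |J(0,t)|$ since $|T(0,t)|=1$. Second, Theorem \ref{thm:rauch comparison} guarantees that $|J(0,t)|$ is nondecreasing on $[0,\tau]$ under the hypothesis $\tau < \pi/(2\sqrt{C_0})$, hence $|J(0,t)| \leq |J(0,\tau)| = |V|$ for all $t \in [0,\tau]$. Combining,
\[
\absval{\tau \dot J(0,\tau) - V} \leq C_0 |V| \int_0^\tau (\tau - t)\, dt = \tfrac{1}{2} C_0 \tau^2 |V|,
\]
which proves the first estimate (with $\beta = 1/2$, so $\simleq$ absorbs the constant).

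The second estimate follows by the triangle inequality:
\[
\absval{\tau \dot J(0,\tau)} \leq \absval V + \absval{\tau \dot J(0,\tau) - V} \leq (1 + \tfrac{1}{2} C_0 \tau^2)\absval V,
\]
and dividing by $\tau$ gives $\absval{\dot J(0,\tau)} \simleq \tfrac{1}{\tau}(1 + C_0 \tau^2)\absval V$. There is no real obstacle here; the only subtle point is that the integral-remainder estimate needs the monotonicity of $|J|$ on the whole interval $[0,\tau]$ (not merely its behavior near the endpoints), which is exactly what Theorem \ref{thm:rauch comparison} supplies under the stated hypothesis on $\tau$.
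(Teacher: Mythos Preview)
Your proof is correct and follows essentially the same route as the paper: both arguments integrate the Jacobi equation once (the paper via $U(t)=J(0,t)-t\dot J(0,t)$ with $\dot U=-t\ddot J$, you via Taylor with integral remainder), then bound the integrand using $|R(J,T)T|\le C_0|J|\le C_0|V|$ from the Rauch monotonicity, arriving at the same $\tfrac12 C_0\tau^2|V|$. Two minor remarks: your change of variables should give weight $t$ rather than $(\tau-t)$ in the final integral (harmless, since $\int_0^\tau t\,dt=\int_0^\tau(\tau-t)\,dt$), and the Taylor/integral formulation tacitly assumes a fixed vector space, so strictly speaking you should pass to a parallel frame along $\gamma$ (as the paper notes) before writing the integral remainder.
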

\begin{proof}
	By Theorem \ref{thm:rauch comparison} we
	have that $\absval J$ is increasing
	for all $t < \tau$. Now
	observe $J(s,0) - 0 \dot J(s,0) = 0$ and
	\[
		\left| D_t\left( J(0,t) - t \dot J(0,t) \right) \right|
			= t \, \absval{\ddot J(0,t)} \leq C_0 t \, \absval{J(0,t)}
			\leq C_0 t \absval V
	\]
	because $\absval{J(0,t)} \leq \absval{J(0,\tau)} = \absval V$.
	So $U(t) = J(0,t) - t \dot J(0,t)$
	vanishes at $t = 0$, and we have bounded its derivative.
	If $P_{a,b}$ denotes the parallel transport
	$T_{\gamma(a)}M \to T_{\gamma(b)}M$ along $\gamma=c(0,\argdot)$,
	the fundamental theorem of calculus reads for
	covariant derivatives along a geodesic:
	\[
		U(t) = P_{0,t} U(0) + \int_0^t P_{r,t} \dot U(r) \, \mathrm d r,
	\]
	as can be easily seen by taking a parallel frame along the geodesic.
	This gives
	$\absval{J(0,t) - t \dot J(0,t)} \leq \smallfrac 1 2 C_0 t^2 \absval V$.
	Since $J(0,\tau) = V$, the first claim is proven.
%
\end{proof}
\begin{lemma}
	\label{thm:estimatesOnDsJ}
	Suppose
	$C_0 \tau^2 < 1$.
	Then
	\[
		\absval{D_s \dot J(s,t)} \simleq (C_0 + \tau C_1) \absval V^2.
	\]
%
\end{lemma}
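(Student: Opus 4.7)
The plan is to derive and analyze a second-order inhomogeneous Jacobi equation satisfied by the variation field $K := D_s J$, then to read off the desired estimate from a Dirichlet boundary-value-problem bound.

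First I would differentiate the Jacobi equation $D_t^2 J + R(J,T)T = 0$ with respect to $s$, using the commutation identity $D_s D_t W - D_t D_s W = R(J,T)W$, the torsion-free identity $D_s T = D_t J = \dot J$, and the geodesic property $D_t T = 0$. Collecting terms yields
\[
D_t^2 K + R(K,T)T = F,
\]
where $F$ is a finite sum of terms of the schematic form $(\nabla R)(J,T,T)J$, $(\nabla R)(J,T,T)T$, $R(\dot J,T)J$, $R(J,\dot J)T$, and $R(J,T)\dot J$. The boundary conditions are then free: since $c(s,0) = p$ is constant in $s$, $J(s,0) = 0$, so $K(s,0) = 0$; since $t \mapsto c(s,\tau) = \delta(s)$ is a geodesic, $K(s,\tau) = D_s \delta'(s) = 0$.

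Next I would bound $|F|$. Under the hypothesis $C_0 \tau^2 < 1$, Lemma \ref{thm:estimatesOnDtJ} gives $|J| \lesssim |V|$ (using the monotonicity from Theorem \ref{thm:rauch comparison}) and $|\dot J| \lesssim |V|/\tau$. With $|T|$ uniformly comparable to $1$ near $s=0$, each curvature term is estimated by either $C_0 |J||\dot J| \lesssim C_0 |V|^2/\tau$ or $C_1 |J|^2 \lesssim C_1 |V|^2$, giving
\[
\|F\|_\infty \;\lesssim\; \tfrac{1}{\tau}(C_0 + \tau C_1)|V|^2.
\]

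Then I would translate the Dirichlet problem into an $L^\infty$ bound on $K$ and $D_t K$. In a parallel frame along $\gamma$, the equation becomes a vector ODE $\ddot K + MK = F$ with $|M| \le C_0$. Since $C_0 \tau^2 < 1$ places us strictly inside the conjugate radius, the Green's function of the Jacobi operator with zero Dirichlet data on $[0,\tau]$ satisfies $|G(t,s)| \lesssim \min(t,\tau-t)$, producing $\|K\|_\infty \lesssim \tau^2 \|F\|_\infty$. Because $K$ vanishes at both endpoints there is an interior point where $D_t K$ vanishes; integrating $\ddot K = F - MK$ from that point gives
\[
\|D_t K\|_\infty \;\lesssim\; \tau \|F\|_\infty + C_0 \tau \|K\|_\infty \;\lesssim\; \tau \|F\|_\infty \;\lesssim\; (C_0 + \tau C_1)|V|^2,
\]
where the second inequality absorbs the $C_0 \tau^2 \|F\|_\infty$ term using the smallness hypothesis. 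Finally, using the commutation $D_s \dot J = D_s D_t J = D_t K + R(J,T)J$ and bounding $|R(J,T)J| \lesssim C_0 |V|^2$ yields the claim.

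The only real obstacle is Step 4: making the Green's function argument for the inhomogeneous Jacobi equation on $[0,\tau]$ quantitative with constants depending only on $n$ (not on the ambient dimension). The key input is that $C_0\tau^2 < 1$ keeps us well away from the first conjugate point, so solutions of the homogeneous Jacobi equation are comparable to their Euclidean counterparts $t$ and $1$, and the comparison theorems applied in the proof of Lemma \ref{thm:estimatesOnDtJ} can be re-used in a nearly identical argument to control the Green's function and its $t$-derivative uniformly.
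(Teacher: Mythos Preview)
Your argument is structurally identical to the paper's: set $U := D_s J$, differentiate the Jacobi equation to obtain $\ddot U + R(U,T)T = F$ with zero Dirichlet data, bound $\|F\|_\infty \lesssim \tau^{-1}(C_0+\tau C_1)|V|^2$ using Lemma~\ref{thm:estimatesOnDtJ}, extract $|\dot U| \lesssim \tau\|F\|_\infty$, and finish via $D_s\dot J = \dot U + R(J,T)J$.

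The only real difference is the step you flag as ``the only real obstacle.'' The paper avoids Green's functions entirely: it passes to Fermi coordinates along $\gamma$ so that covariant $t$-derivatives become ordinary ones, and then proves a self-contained ODE lemma (Lemma~\ref{lem:estimateOnSecondOrderODE}): if $\ddot U = AU+B$ on $[0,\tau]$ with $U(0)=U(\tau)=0$ and $\|A\|\tau^2\le 1$, then $|\dot U|\lesssim \tau\sup|B|$. Its proof locates the maximizer $\xi_0$ of $|U|^2$, uses $U\cdot\dot U=0$ there together with $U(\xi_0)-\xi_0\dot U(\xi_0)=-\int_0^{\xi_0}t\ddot U\,dt$ to bound both $\sup|U|$ and $|\dot U(\xi_0)|$, and then integrates $\ddot U$ from $\xi_0$. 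This is shorter than the Green's-function route and sidesteps any conjugate-point discussion. Your approach would also go through, but note that ``an interior point where $D_tK$ vanishes'' is not literally correct for vector-valued $K$; you would apply Rolle componentwise in a parallel frame, obtaining a different zero for each component, which suffices for the estimate.
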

\begin{proof}
	Our approach is to derive some differential equation for
	$D_s J = \nabla_J J$, which has boundary values $D_s J (s,0) = 0$ and
	$D_s J(s,\tau) = 0$ for all $s$ because $J(s,0) = 0$ is constant
	in $s$ and $J(s,\tau) = \dds \delta(s)$ is
	the tangent of a geodesic.
	
	Because $J$ and $T$ are coordinate vector fields,
	$D_s D_t W = D_t D_s W + R(J,T)W$ for every vector field $W$. We see that 
	\[
		D_s \dot J = D_t D_s J + R(J,T)J 
	\] 
	and if we let $U=D_s J$, 
	\[
		\absval{D_s \dot J(s,t)} \leq \absval{\dot U}+ C_0 \absval T \absval{J}^2.
	\]
	We can now use the symmetries of $R$ to get
	\[
		\begin{split}
		D_s \ddot J = D_s D_t D_t J
			& = D_t D_s D_t J + R(J,T)\dot J \\
			& = D_t D_t D_s J + D_t(R(J,T)J) + R(J,T) \dot J \\
			& = D^2_{tt} (D_s J) + \dot R(J,T)J + R(\dot J,T)J + 2R(J, T)\dot J.
		\end{split}
	\]
	The (negative) left-hand side is, due to the Jacobi equation,
	\[
		-D_s \ddot J = D_s\big(R(J,T)T\big) = (D_s R)(J,T)T + R(D_s J, T)T
			+ R(J, \dot J)T + R(J,T)\dot J
	\]
	(note $D_s T = D_t J = \dot J$). From now on, we consider $J$ and $\dot J$
	as being part of the given data (which is allowed,
	as we have already sufficiently described their
	behavior). So we have a linear second-order
	ODE for $U$:
	\[
		\ddot U = A U + B,
	\]
	where both sides scale with $1 / \lambda^2$ when $t$ is replaced by $\lambda t$,
	and the operator norm of $A$ is bounded through $\norm A
	\leq C_0 \absval T^2(s)$. At $s = 0$, we have $\absval T (0) = 1$ and hence
	$\norm A \tau^2 \leq C_0 \tau^2 \leq 1$ and
%
%
	\[
		\begin{split}
		\absval B & \simleq C_1 \absval J^2 + C_0 \absval J \, \absval{\dot J} \\
			& \simleq C_1 \absval V^2 + C_0 \absval V (\smallfrac 1 \tau + C_0 \tau) \absval V \\
			& \simleq (C_0 + \tau C_1) \, \smallfrac 1 \tau \absval V^2.
		\end{split}
	\]
	(we have used Lemma \ref{thm:estimatesOnDtJ} and the assumption that $C_0 \tau^2 \leq 1$). Now consider Fermi coordinates
	to obtain an ordinary differential equation in Euclidean space: They map $c(0,\argdot)$ to
	some coordinate line, say $x_1$, along which $g$ is
	the identity matrix, and
	all Christoffel symbols vanish. Therefore,
	for any smooth vector field
	$V = \sum V^i \frac\partial{\partial x^i}$, the covariant
	derivative in direction $T = \partial_t c$
	is just $\nabla_T V = \frac{\partial V^i}{\partial x^1}
	\frac \partial {\partial x^i}$. In this frame, our ODE has the
	coordinate expression
	\[
		\sum \frac{\partial^2 U^i}{(\partial x^1)^2}
			\, \frac \partial {\partial x^i}
		= \sum \Big({\textstyle\sum A^i_j U^j} + B^i \Big) \frac \partial {\partial x^i}.
	\]
	As we only need to know the values of $U$ on $x = (t,0,\dots,0)$,
	this gives a differential equation for the components $U^i$
	of the same form as above.
	The claim on $U$ and $\dot U$ is then contained in the following lemma.
%
\end{proof}
\begin{lemma}
	\label{lem:estimateOnSecondOrderODE}
	Consider some $\Cont^2$ function $U: [0;\tau] \to \R^m$ 
	satisfying the linear second-order differential equation
	$\ddot U = AU + B$ with smooth time-dependent data $A(t) \in \R^{m \times m}$
	and $B(t) \in \R^m$ as well as boundary conditions $U(0) = U(\tau) = 0$.
	Then, provided that $\norm{A(t)} \tau^2 \leq 1$ everywhere, we have
	$\absval{\dot U(t)} \simleq \absval B \tau$.
\end{lemma}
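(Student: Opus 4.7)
The plan is to represent $U$ via the Green's function for the operator $\partial_t^2$ on $[0,\tau]$ with Dirichlet boundary data, and then bootstrap from an $L^\infty$ bound on $U$ to an $L^\infty$ bound on $\dot U$.

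First I would write $U$ as an integral. The Green's function for $\ddot U = F$ with $U(0) = U(\tau) = 0$ on $[0,\tau]$ is
\[
  G(t,s) = \begin{cases} s(t-\tau)/\tau, & 0 \leq s \leq t, \\ t(s-\tau)/\tau, & t \leq s \leq \tau, \end{cases}
\]
so that $U(t) = \int_0^\tau G(t,s)\bigl(A(s)U(s) + B(s)\bigr)\,\mathrm d s$. A direct computation gives the pointwise bound $\int_0^\tau |G(t,s)|\,\mathrm d s = t(\tau - t)/2 \leq \tau^2/8$, and differentiating under the integral,
\[
  \dot U(t) = \int_0^\tau \partial_t G(t,s)\bigl(A(s) U(s) + B(s)\bigr)\,\mathrm d s,
\]
with $|\partial_t G(t,s)| \leq 1$ and $\int_0^\tau |\partial_t G(t,s)|\,\mathrm d s \leq \tau/2$.

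Next I would derive an $L^\infty$ bound on $U$ itself. Taking the supremum over $t$ in the integral representation and using the Green's function bound,
\[
  \sup_{[0,\tau]} |U| \leq \tfrac{\tau^2}{8}\bigl(\sup\|A\|\cdot \sup|U| + \sup|B|\bigr).
\]
Since by assumption $\sup\|A\|\,\tau^2 \leq 1$, the coefficient of $\sup|U|$ on the right is at most $1/8$, and absorbing it on the left yields $\sup |U| \lesssim \tau^2 \sup|B|$.

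Finally, plugging this back into the integral representation of $\dot U$ and using the bound on $\int |\partial_t G|$,
\[
  |\dot U(t)| \leq \tfrac{\tau}{2}\bigl(\sup\|A\|\cdot \sup|U| + \sup|B|\bigr) \lesssim \tfrac{\tau}{2}\bigl(\tau^2\sup\|A\|\cdot \sup|B| + \sup|B|\bigr) \lesssim \tau\,\sup|B|,
\]
again using $\sup\|A\|\,\tau^2 \leq 1$. The only real subtlety — and arguably the ``main obstacle'' — is the bootstrap in the middle step: one needs the smallness assumption $\|A\|\tau^2 \leq 1$ precisely to close the loop and absorb the $AU$ contribution into $\sup|U|$ on the left, which is what prevents the proof from being completely linear in the data.
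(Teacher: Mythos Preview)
Your proof is correct and complete; the Green's function computations, the absorption step using $\|A\|\tau^2 \leq 1$, and the differentiation under the integral (the boundary terms at $s=t$ cancel) all check out.

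The paper takes a genuinely different route. Instead of writing down the Green's function, it argues via a maximum-point trick: let $\xi_0$ maximize $|U|^2$, so $U(\xi_0)\cdot\dot U(\xi_0)=0$; then the integration-by-parts identity $U(\xi_0)-\xi_0\dot U(\xi_0) = -\int_0^{\xi_0} t\,\ddot U\,dt$ combined with the Pythagorean relation $|U(\xi_0)-\xi_0\dot U(\xi_0)|^2 = K^2 + \xi_0^2|\dot U(\xi_0)|^2$ gives simultaneously $K \leq \frac{1}{2}\tau^2(aK+b)$ and $|\dot U(\xi_0)| \leq b\tau$; one then integrates $\ddot U$ from $\xi_0$ to any $\xi$ to bound $|\dot U(\xi)|$. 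Both proofs share the same bootstrap idea---use the smallness hypothesis to absorb the $AU$ contribution into the left-hand side of the $\sup|U|$ estimate, then feed that into a bound on $\dot U$---but the mechanics differ: your Green's function approach is the cleaner, textbook method and generalizes immediately to other Dirichlet problems, while the paper's argument is more hands-on and avoids having to know the explicit kernel, at the cost of the slightly ad hoc orthogonality observation at the maximum.
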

\begin{proof}
	Denote the maxima of $\norm A$ and $\absval B$ over
	$[0,\tau]$ as $a$ and $b$ respectively.
	As $U$ is $\Cont^2$, there is an upper bound $K$ for $\absval U$ on $[0,\tau]$.
	Let $\xi_{0}$ be a point in $\left[  0,\tau\right]  $ that maximizes
	$\left\vert U\right\vert ^{2},$ i.e.,
	\[
	\left\vert U\left(  \xi_{0}\right)  \right\vert ^{2}=K^{2}.
	\]
	Since $\left\vert U\left(  0\right)  \right\vert =\left\vert U\left(
	\tau\right)  \right\vert =0,$ we must have that $\xi_{0}\in\left(
	0,\tau\right)  $ and thus at $\xi_{0}$ we have%
	\begin{align*}
	0 &  =\frac{d}{dt}\left\vert U\right\vert ^{2}=2U\cdot\dot{U}.
	\end{align*}
	
	We now see that
	\begin{align*}
	K^{2}+\xi_{0}^{2}\left\vert \dot{U}\left(  \xi_{0}\right)  \right\vert ^{2}  
	&= \left\vert U\left(  \xi_{0}\right)  -\xi_{0}\dot{U}\left(  \xi_{0}\right)
	\right\vert ^{2}\\
	& =\left\vert \int_{0}^{\xi_{0}}t \ddot U dt\right\vert ^{2}\\
	& \leq\left(  \int_{0}^{\xi_{0}}t\left(  aK+b\right)  dt\right)  ^{2}\\
	& =\left(  \frac{1}{2}\xi_{0}^{2}\left(  aK+b\right)  \right)  ^{2}.
	\end{align*}
	It then follows that
	\[
		K\leq\frac{1}{2}\xi_{0}^{2}\left(  aK+b\right)  \leq\frac{1}{2}\tau^{2}\left(
		aK+b\right)
	\]
	so
	\[
		K\leq\frac{\tau^{2}b}{2\left(  1-\frac{1}{2}\tau^{2}a\right)  }\leq\tau^{2}b
	\]
	if $\tau^{2}a\leq1.$ It also follows that
	\[
		\xi_{0}\left\vert \dot{U}\left(  \xi_{0}\right)  \right\vert \leq\frac{1}%
		{2}\xi_{0}^{2}\left(  aK+b\right),
	\]
	and so
	\[
		\left\vert \dot{U}\left(  \xi_{0}\right)  \right\vert \leq\frac{1}{2}%
		\tau\left(  ab\tau^{2}+b\right)  \leq b\tau.
	\]	
	We can now estimate $\left\vert \dot{U}\left(  \xi\right)  \right\vert $ at
	any point as
	\begin{align*}
		\left\vert \dot{U}\left(  \xi\right)  \right\vert  
		&  \leq\left\vert \int
			_{\xi_{0}}^{\xi} \ddot U dt\right\vert +\left\vert \dot{U}\left(
			\xi_{0}\right)  \right\vert \\
		&  \leq\int_{\xi_{0}}^{\xi}\left(  aK+b\right)  dt+b\tau\\
		&  \leq\tau\left(  ab\tau^{2}+b\right)  +b\tau\\
		&  \leq3b\tau.
	\end{align*}
%
\end{proof}
\begin{remark}
	If $\gamma$ is not parametrized by arclength, we introduce
	$\ell := \tau \absval T(0) = \dist(p,q)$, and the estimates become
	\[
		\absval{\tau \dot J(0,\tau) - V} \simleq C_0 \ell^2 \, \absval V,
		\qquad
		\absval{\tau D_s \dot J(0,\tau)} \simleq (C_0 + \ell C_1)
			\, \ell \, \absval V^2.
	\]
\end{remark}

\begin{remark}
The motivation for this calculation is as follows.
It is well-known that Jacobi fields grow approximately linear:
If $J(t)$ is a Jacobi field along some arclength-parametrized
geodesic $\gamma(t)$ and $P_{0,t}$ is the
corresponding parallel transport from $T_{\gamma(0)}M$ to $T_{\gamma(t)}M$ along $\gamma$, then
\begin{equation}
	\label{eqn:JacobiFieldsUsualEstimate}
	\absval{J(t) - P_{0,t}(J(0) + t \dot J(0))}
		\leq C_0 t^2(\absval{J(0)} + t \absval{\dot J(0)})
	\qquad \text{for $C_0 t^2 < \pi$}.
\end{equation}
In fact, \cite[thm. 5.5.3]{Jost11} proves that 
\[
	\absval{J(t) - P_{0,t}(J(0) + t \dot J(0))} \leq
	\absval{J(0)}(\cosh ct - 1) +
	\ddt \absval{J(0)}(\frac 1 c \sinh ct -t)
\]
for $c = \sqrt{C_0}$. By Taylor expansion and
$\ddt \absval J \leq \absval{\dot J}$,
this estimate implies (\ref{eqn:JacobiFieldsUsualEstimate}).
This is the initial-value estimate corresponding
to our boundary-value setting.
\end{remark}

\begin{remark}
The proofs of Lemmas \ref{thm:estimatesOnDtJ} and \ref{thm:estimatesOnDsJ} can be extended to
prove more general lemmas stating that if the derivatives of the curvature tensor up to order $k$ are
	bounded by constants $C_0,\dots,C_k$, then we can find bounds on $\absval{D^{k+2}_{t\dots t} J}$
	and $\absval{D^k_{s\dots s}\dot J}$. This is done by noticing that
	Theorem \ref{thm:rauch comparison} is true for $s$ different from zero as well since $J(s,\cdot)$ is a 
	Jacobi field,
	and then by differentiating the Jacobi Equation and estimating ODE. (One technical aspect of this approach
	is that we need to estimate $\absval{T}(s)$, which is close to 1. This estimate
	can be obtained using the fact that $D_s T = D_t J$	and using Lemma \ref{thm:estimatesOnDtJ}.)
\end{remark}


\subsection{Derivatives of the Squared Distance Function}

\begin{definition}
	\label{def:GradientOfSquaredDistance}
	For $p \in M$, let $\dist_p$ be the geodesic distance to $p$,
	\[
		Y_p := \grad \dist_p, \qquad X_p := \smallfrac 1 2 \grad \dist_p^2 = \dist_p Y_p.
	\]
\end{definition}
\begin{remark}
	\begin{subenum}
	\item \label{rem:YisGeodesicTangent}
	As $Y_p$ is the tangent of an arclength-parametrized geodesic,
	$\nabla_{Y_p} Y_p = 0$. Along
	this geodesic, $\dist_p$ is exactly the arclength, so $Y_p(\dist_p) = 1$
	and $X_p(\dist_p) = \dist_p$.
	
	\item The fact $V(\dist_p) = 0$ for $V \perp Y_p$ is usually referred
	to as the Gauss lemma.
	
	\item $X_p|_q$ is the tangent $\dot \gamma(1)$ of the geodesic with
	$\gamma(0) = p$ and $\gamma(1) = q$. Reversing the direction of $\gamma$
	shows $X_p|_q = - P_{0,1} X_q|_p$, where $P$ is the parallel transport along $\gamma$.
	So
	\[
		\exp_p(-X_q|_p) = q, \qquad (\exp_p)\inv(q) = -X_q|_p = P_{1,0} X_p|_q.
	\]
	\end{subenum}
\end{remark}
\begin{lemma}
	\label{prop:firstDerivOfX}
	For $V \in T_q M$, where $q$ is in a convex neighborhood of $p$,
	recall the Jacobi field $J$ introduced at the beginning of the section. Then
	\[
		\nabla_V X_p = \tau \dot J(0,\tau),
		\qquad
		\nabla^2_{V,V} X_p = \tau D_s \dot J(0,\tau).
	\]
\end{lemma}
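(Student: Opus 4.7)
The strategy is to reduce both identities to a direct computation using the identity
\[
  X_p|_{c(s,\tau)} \;=\; \tau \, T(s,\tau)
\]
for all small $s$. To establish this, I would use that, by construction, $t \mapsto c(s,t)$ is a geodesic starting at $p$ with constant-speed velocity $T(s,\argdot)$, so the distance from $p$ to $c(s,\tau) = \delta(s)$ equals $\tau \absval{T(s,\tau)}$. Then since $Y_p|_{\delta(s)}$ is the unit velocity at $\delta(s)$ of the minimizing geodesic from $p$ (which in the convex neighbourhood is exactly $c(s,\argdot)$), the definition $X_p = \dist_p \, Y_p$ yields $X_p|_{\delta(s)} = \tau \absval{T(s,\tau)} \cdot T(s,\tau)/\absval{T(s,\tau)} = \tau T(s,\tau)$.

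For the first derivative, I would differentiate this identity along $s \mapsto \delta(s)$ at $s=0$. Since $\dot\delta(0) = V$, the covariant derivative of the vector field $s \mapsto X_p|_{\delta(s)}$ along $\delta$ at $s=0$ is exactly $\nabla_V X_p$. Applying $D_s$ to the right-hand side and using the standard symmetry $D_s T = D_t J = \dot J$ (which holds because $T$ and $J$ are the coordinate fields of the parametrized surface $c$) gives $\nabla_V X_p = \tau \, \dot J(0,\tau)$.

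For the second derivative, I would extend $V$ along $\delta$ by $\tilde V(s) := \dot\delta(s)$. Since $\delta$ is a geodesic, $\tilde V$ is parallel along $\delta$, i.e. $\nabla_{\tilde V}\tilde V|_\delta = 0$. Consequently the definition $\nabla^2_{V,V} X_p = \nabla_V(\nabla_{\tilde V} X_p) - \nabla_{\nabla_V \tilde V} X_p$ simplifies at $q=\delta(0)$ to $D_s\bigl(\nabla_{\tilde V(s)} X_p\big|_{\delta(s)}\bigr)\big|_{s=0}$. The same computation as above, but carried out with a variable base point $\delta(s)$ rather than only at $s=0$, shows $\nabla_{\tilde V(s)} X_p|_{\delta(s)} = \tau \, \dot J(s,\tau)$ for all small $s$. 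Differentiating this identity in $s$ and evaluating at $s=0$ produces $\nabla^2_{V,V} X_p = \tau \, D_s\dot J(0,\tau)$, as claimed.

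The only non-trivial point is the identification $X_p|_{\delta(s)} = \tau T(s,\tau)$; once this is in place, both identities follow mechanically from the symmetry $D_s T = D_t J$ and the fact that $\dot\delta$ is parallel along $\delta$. The convexity assumption is used exactly where needed: to guarantee that $c(s,\argdot)$ is the unique minimizing geodesic from $p$ to $\delta(s)$, so that $Y_p|_{\delta(s)}$ really is $T(s,\tau)/\absval{T(s,\tau)}$.
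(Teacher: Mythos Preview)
Your proof is correct and follows essentially the same route as the paper: both arguments rest on the identity $X_p|_{\delta(s)} = \tau\,T(s,\tau)$ and then differentiate using the symmetry $D_s T = D_t J = \dot J$. Your treatment of the second derivative is in fact more explicit than the paper's (you spell out why the geodesic property of $\delta$ kills the $\nabla_{\nabla_V \tilde V} X_p$ term), and your argument handles the radial case uniformly, whereas the paper appends a separate remark for $V$ parallel to $X_p$.
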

\begin{proof}
	For the variation of geodesics $c$ inducing $J$,
	the $t$-derivative is
	\[
		\partial_t c(s,t) = \smallfrac 1 \tau P_{0,t}(\exp_p)\inv \delta(s)
			= \smallfrac 1 \tau P_{\tau,t} X_p|_{\delta(s)}
	\]
	and hence $\tau \dot J(0,\tau) = \tau D_t \partial_s c(0,\tau)
			= \tau D_s \partial_t c(0,\tau)
			= D_s X_p|_{c(0,\tau)}
			= \nabla_{J(0,\tau)} X_p$.
	Differentiating this once more gives the claim
	for the second derivative.
	If $V$ is parallel to $X_p$,
	then use $\nabla_Y Y = 0$.
\end{proof}
\begin{remark}
	Analogous to $(\exp_p)\inv(q) = -X_q|_p$,
	the derivatives of $X$ and $\exp$
	correspond to the following: $\nabla_V X_p$ is the derivative of some Jacobi field
	with prescriped start and end value, whereas $d(\exp_p V)(W)$ is the
	end value $J(1)$ of a Jacobi field $J$ along the geodesic
	$t \mapsto \exp_p tV$ with $J(0) = 0$ and $\dot J(0) = W$,
	cf. \cite[eqn. 1.2.5]{Karcher89}.
\end{remark}
We may now use the estimates in Lemmas \ref{thm:estimatesOnDtJ} and \ref{thm:estimatesOnDsJ}
to show the following.
\begin{corollary}
	\label{corol:estimateForNablaX}
	If $R$ is the Riemannian curvature tensor of $(M,g)$, assume
	$\norm R \leq C_0$ and $\norm{\nabla R} \leq C_1$ everywhere. Let $q$
	be in a convex neighborhood of $p$ with distance $\tau$ to $p$.
	Then for $V \in T_q M$, 
	we have that
	\begin{align*}
		\absval{\nabla_V X_p - V} & \simleq C_0 \tau^2 \absval{V}
		& \text{if $\tau < \frac{\pi}{2\sqrt{C_0}}$},\\
		\absval{\nabla^2_{V,V} X_p} & \simleq (C_0 + \tau C_1)\tau \, \absval{V}^2
		& \text{if also $C_0 \tau^2 < 1$.}
	\end{align*}
\end{corollary}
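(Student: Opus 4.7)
The plan is to obtain the corollary as an immediate consequence of combining Proposition~\ref{prop:firstDerivOfX} with the two Jacobi field estimates in Lemmas~\ref{thm:estimatesOnDtJ} and \ref{thm:estimatesOnDsJ}. All the substantive work has already been done in those lemmas; the role of this corollary is simply to translate them from the language of Jacobi fields into the language of covariant derivatives of $X_p = \smallfrac 1 2 \grad \dist_p^2$.

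I would begin by setting up exactly the variation of geodesics described at the top of the appendix: take $\gamma : [0,\tau] \to M$ the unit-speed geodesic from $p$ to $q$, choose $\delta(s)$ the geodesic with $\delta(0)=q$ and $\dot\delta(0)=V$, and let $J(s,t) = \partial_s c(s,t)$ be the Jacobi field arising from $c(s,t) = \exp_p((t/\tau)\exp_p^{-1}\delta(s))$. This is the unique variational setup in which Proposition~\ref{prop:firstDerivOfX} applies, giving the two identifications
\[
  \nabla_V X_p = \tau \dot J(0,\tau), \qquad
  \nabla^2_{V,V} X_p = \tau D_s \dot J(0,\tau).
\]

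For the first estimate, under the hypothesis $\tau < \pi/(2\sqrt{C_0})$, Lemma~\ref{thm:estimatesOnDtJ} directly furnishes $\absval{\tau \dot J(0,\tau) - V} \simleq C_0 \tau^2 \absval V$, so that substituting the first identification gives $\absval{\nabla_V X_p - V} \simleq C_0 \tau^2 \absval V$. For the second estimate, the additional hypothesis $C_0 \tau^2 < 1$ is exactly what is required to invoke Lemma~\ref{thm:estimatesOnDsJ}, which yields $\absval{D_s \dot J(0,\tau)} \simleq (C_0 + \tau C_1)\absval V^2$; multiplying by $\tau$ and using the second identification produces $\absval{\nabla^2_{V,V} X_p} \simleq \tau(C_0 + \tau C_1)\absval V^2$, as claimed.

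There is really no obstacle at this stage of the argument: the hard analysis lies upstream, in the boundary-value Jacobi field analysis (especially the ODE estimate of Lemma~\ref{lem:estimateOnSecondOrderODE} used in the proof of Lemma~\ref{thm:estimatesOnDsJ}). The only checks to make here are bookkeeping ones: verifying that the hypotheses of the two lemmas match the hypotheses of the corollary (which they do line-for-line), and that the factors of $\tau$ arising from the chain rule $\partial_t c = \smallfrac 1 \tau P_{\tau,t} X_p|_{\delta(s)}$ are being tracked correctly through Proposition~\ref{prop:firstDerivOfX}. Once this is noted, the proof is a two-line substitution.
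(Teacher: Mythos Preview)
Your proposal is correct and matches the paper's approach exactly: the paper does not even supply a formal proof block for this corollary, introducing it simply with ``We may now use the estimates in Lemmas~\ref{thm:estimatesOnDtJ} and \ref{thm:estimatesOnDsJ} to show the following,'' which is precisely the substitution via Lemma~\ref{prop:firstDerivOfX} that you spell out.
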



\bibliographystyle{amsalpha}
\bibliography{karcherMeanCoordinates}

\providecommand{\bysame}{\leavevmode\hbox to3em{\hrulefill}\thinspace}
\providecommand{\MR}{\relax\ifhmode\unskip\space\fi MR }
\providecommand{\MRhref}[2]{%
  \href{http://www.ams.org/mathscinet-getitem?mr=#1}{#2}
}
\providecommand{\href}[2]{#2}
\begin{thebibliography}{BCOS01}

\bibitem[Afs11]{Afsari11}
Bijan Afsari, \emph{Riemannian {$L^p$} center of mass: existence, uniqueness,
  and convexity}, Proc. Amer. Math. Soc. \textbf{139} (2011), no.~2, 655--673.
  \MR{2736346 (2011j:53049)}

\bibitem[Bar10]{Bartels10}
S{\"o}ren Bartels, \emph{Numerical analysis of a finite element scheme for the
  approximation of harmonic maps into surfaces}, Mathematics of Computation
  \textbf{79} (2010), no.~271, 1263--1301.

\bibitem[BCOS01]{Bertalmio01}
Marcelo Bertalmio, Li-Tien Cheng, Stanley Osher, and G.~Sapiro,
  \emph{Variational problems and partial differential equations on implicit
  surfaces}, Journal of Computational Physics \textbf{174} (2001), 759--780.

\bibitem[BDG11]{Boissonnat11}
Jean-Daniel Boissonnat, Ramsay Dyer, and Arijit Ghosh, \emph{Stability of
  {D}elaunay-type structures for manifolds}, INRIA Technical Report
  \textbf{CGL-TR-2} (2011), 229--238.

\bibitem[Blu52]{Blumenthal52}
Leonard~M. Blumenthal, \emph{Distance geometry}, Clarendon Press, Oxford, 1952.

\bibitem[BS08]{Brenner08}
Susanne~C. Brenner and L.~Ridgway Scott, \emph{The mathematical theory of
  finite element methods}, 3rd ed., Texts in Applied Mathematics, no.~15,
  Springer, 2008.

\bibitem[CM06]{Cruzeiro06}
Ana~Bela Cruzeiro and Paul Malliavin, \emph{Numerical approximation of
  diffusions in using normal charts of a {R}iemannian manifold}, Stochastic
  Processes and their Applications \textbf{116} (2006), 1088 -- 1095.

\bibitem[DE13]{Dziuk13}
Gerhard Dziuk and Charles~M. Elliott, \emph{Finite element methods for surface
  \textsc{pde}s}, Acta Numerica \textbf{22} (2013), 289--396.

\bibitem[Dey13]{Deylen14}
Stefan W.~von Deylen, \emph{Numerical approximation in {R}iemannian manifolds
  by {K}archer means}, Dissertation, Freie Universit\"at Berlin, 2013,
  \href{https://arxiv.org/abs/1505.03710}{arXiv:1505.03710}.

\bibitem[DVW15]{Dyer14}
Ramsay Dyer, Gert Vegter, and Mathijs Wintraecken, \emph{Riemannian simplices
  and triangulations}, 31st {I}nternational {S}ymposium on {C}omputational
  {G}eometry, LIPIcs. Leibniz Int. Proc. Inform., vol.~34, Schloss Dagstuhl.
  Leibniz-Zent. Inform., Wadern, 2015, pp.~255--269.

\bibitem[DVW16]{Dyer16}
\bysame, \emph{Barycentric coordinate neighbourhoods in {R}iemannian
  manifolds}, 2016, preprint,
  \href{https://arxiv.org/abs/1606.01585}{arXiv:1606.01585}.

\bibitem[DW87]{Dekster87}
Boris~V. Dekster and John~B. Wilker, \emph{Edge lengths guaranteed to form a
  simplex}, Archiv der Mathematik \textbf{49} (1987), 351--366.

\bibitem[Dzi88]{Dziuk88}
Gerhard Dziuk, \emph{Finite elements for the {L}aplace operator on arbitrary
  surfaces}, Partial Differential Equations and Calculus of Variations (Stefan
  Hildebrandt and Rolf Leis, eds.), Lecture Notes in Mathematics, no. 1357,
  Springer, 1988, pp.~142--155.

\bibitem[Fie11]{Fiedler11}
Miroslav Fiedler, \emph{Matrices and graphs in geometry}, Encyclopedia of
  Mathematics and its Applications, no. 139, Cambridge University Press,
  Cambridge, 2011.

\bibitem[GHS13]{Grohs13}
Philipp Grohs, Hanne Hardering, and Oliver Sander, \emph{Optimal a priori
  discretization error bounds for geodesic finite elements}, Preprint, RWTH
  Aachen Bericht Nr. 365, May 2013.

\bibitem[GK73]{Grove73}
Karsten Grove and Hermann Karcher, \emph{How to conjugate {$C^1$}-close group
  actions}, Mathematische Zeitschrift \textbf{123} (1973), 11--20.

\bibitem[Har15]{Hardering15}
Hanne Hardering, \emph{Intrinsic discretization error bounds for geodesic
  finite elements}, Dissertation, Freie Universit\"at Berlin, 2015.

\bibitem[HP11]{Hildebrandt11}
Klaus Hildebrandt and Konrad Polthier, \emph{On approximation of the
  {L}aplace--{B}eltrami operator and the {W}illmore energy of surfaces},
  Computer Graphics Forum (Proceedings of Eurographics) \textbf{30} (2011),
  1513--1520.

\bibitem[HPW06]{Hildebrandt06}
Klaus {Hildebrandt}, Konrad {Polthier}, and Max {Wardetzky}, \emph{On the
  convergence of metric and geometric properties of polyhedral surfaces},
  Geometriae Dedicata \textbf{123} (2006), 89--112.

\bibitem[HS12]{Holst12}
Michael Holst and Ari Stern, \emph{Geometric variational crimes: {H}ilbert
  complexes, finite element exterior calculus, and problems on hypersurfaces},
  Foundations of Computational Mathematics \textbf{12} (2012), 263--293.

\bibitem[HT04]{Huper04}
Knut H\"uper and Jochen Trumpf, \emph{Newton-like methods for numerical
  optimization on manifolds}, Signals, Systems and Computers, 2004. Conference
  Record of the Thirty-Eighth Asilomar Conference on Signals, Systems and
  Computers, vol.~1, 2004, pp.~136--139.

\bibitem[Jos11]{Jost11}
J\"urgen Jost, \emph{Riemannian geometry and geometric analysis}, 6th ed.,
  Springer, Berlin, 2011.

\bibitem[Kar77]{Karcher77}
Hermann Karcher, \emph{Riemannian center of mass and mollifier smoothing},
  Communications on Pure and Applied Mathematics \textbf{30} (1977), 509--541.

\bibitem[Kar89]{Karcher89}
\bysame, \emph{Riemannian comparison constructions}, Global Differential
  Geometry (Shiing-Shen Chern, ed.), Studies in Mathematics, no.~27,
  Mathematical Association of America, Buffalo, 1989, pp.~170--222.

\bibitem[Kar14]{Karcher14}
\bysame, \emph{Riemannian center of mass and so called {K}archer mean}, 2014,
  preprint, \href{https://arxiv.org/abs/1407.2087}{arXiv:1407.2087}.

\bibitem[Kau76]{Kaul76}
Helmut Kaul, \emph{Schranken f\"ur die {C}hristoffelsymbole}, manuscripta
  mathematica \textbf{19} (1976), 261--273.

\bibitem[Ken90]{Kendall90}
Wilfrid~S. Kendall, \emph{Probability, convexity, and harmonic maps with small
  image. {I}. {U}niqueness and fine existence}, Proc. London Math. Soc. (3)
  \textbf{61} (1990), no.~2, 371--406. \MR{1063050 (91g:58062)}

\bibitem[Ken13]{Kendall13}
\bysame, \emph{A survey of {R}iemannian centres of mass for data}, Proceedings
  59th ISI World Statistics Congress, Hong Kong, 2013, pp.~1786--1791.

\bibitem[Moa05]{Moakher05}
Maher Moakher, \emph{A differential geometric approach to the geometric mean of
  symmetric positive-definite matrices}, Siam J. Matrix Anal. Appl. \textbf{26}
  (2005), 735--747.

\bibitem[M{\"u}n07]{Muench07}
Ingo M{\"u}nch, \emph{A geometrically and materially nonlinear {C}osserat model
  (ein geometrisch und materiell nichtlineares {C}osserat-{M}odell)},
  Dissertation, Universit\"at Karlsruhe, 2007.

\bibitem[PP93]{Pinkall93}
Ulrich Pinkall and Konrad Polthier, \emph{Conputing discrete minimal surfaces
  and their conjugates}, Experimental Mathematics \textbf{2} (1993), 15--36.

\bibitem[PP00]{Polthier00}
Konrad Polthier and Elke Preu\ss, \emph{Variational approach to vector field
  decomposition}, Scientific Visualization (Proc. of Eurographics Workshop on
  Scientific Visualization), 2000.

\bibitem[Rus10]{Rustamov10}
Raif~M. Rustamov, \emph{Barycentric coordinates on surfaces}, Eurographics
  Symposium on Geometry Processing \textbf{29} (2010), 1507--1516.

\bibitem[San12]{Sander12}
Oliver Sander, \emph{Geodesic finite elements on simplicial grids},
  International Journal of Numerical Methods in Engineering \textbf{92} (2012),
  999--1025.

\bibitem[San13]{Sander13}
\bysame, \emph{Geodesic finite elements of higher order}, Preprint, RWTH Aachen
  Bericht Nr. 356, January 2013.

\bibitem[San16]{Sander16}
\bysame, \emph{Test function spaces for geometric finite elements}, 2016,
  preprint, \href{https://arxiv.org/abs/1607.07479}{arXiv:1607.07479}.

\end{thebibliography}

\end{document}